\numberwithin{equation}{section}
\newtheorem{theorem}{Theorem}[section]
\newtheorem{proposition}[theorem]{Proposition}
\newtheorem{lemma}[theorem]{Lemma}
\newtheorem{corollary}[theorem]{Corollary}
\newtheorem{conjecture}[theorem]{Conjecture}
\theoremstyle{remark}
\newtheorem{definition}[theorem]{Definition}
\newtheorem{example}[theorem]{Example}
\newtheorem{remark}[theorem]{Remark}
\newcounter{FNC}[page]
\renewcommand{\Magenta}[1]{{\color{magenta}{#1}}}
\def\fauxfootnote#1{{\addtocounter{FNC}{2}\Magenta{$^\fnsymbol{FNC}$}%
     \let\thefootnote\relax\footnotetext{\Magenta{$^\fnsymbol{FNC}$#1}}}}
\def\cprime{$'$}
\newcommand{\CC}{{\mathbb C}}
\newcommand{\NN}{{\mathbb N}}
\newcommand{\PP}{{\mathbb P}}
\newcommand{\QQ}{{\mathbb Q}}
\newcommand{\RR}{{\mathbb R}}
\newcommand{\TT}{{\mathbb T}}
\newcommand{\ZZ}{{\mathbb Z}}
\newcommand{\bOne}{\mathbbm{1}}
\newcommand{\pr}{{p}{r}}
\newcommand{\Berg}{{\mathcal Ber}}
\newcommand{\calF}{{\mathcal F}}
\newcommand{\calH}{{\mathcal H}}
\newcommand{\calL}{{\mathcal L}}
\newcommand{\calO}{{\mathcal O}}
\newcommand{\calX}{{\mathcal X}}
\newcommand{\calV}{{\mathcal V}}
\newcommand{\scrP}{\mathscr{P}}
\newcommand{\scrA}{\mathscr{A}}
\newcommand{\coscrA}{{\it c}\mathscr{A}}
\newcommand{\trop}{\mathscr{T}}
\newcommand{\pls}{\mathscr{P}^{\infty}}
\newcommand{\TOP}{{\rm top}}
\DeclareMathOperator{\ini}{{\rm in}}
\DeclareMathOperator{\Log}{{\rm Log}}
\DeclareMathOperator{\Arg}{{\rm Arg}}
\DeclareMathOperator{\Hom}{{\rm Hom}}
\newcommand{\defcolor}[1]{\Cyan{#1}}
\newcommand{\demph}[1]{\defcolor{{\sl #1}}}
\title{Phase limit sets of linear spaces and discriminants}
\author{Mounir Nisse}
\address{Mounir Nisse\\
   Department of Mathematics\\
   Xiamen University Malaysia\\
   Jalan Sunsuria, Bandar Sunsuria\\
   43900, Sepang, Selangor\\ Malaysia}
\email{mounir.nisse@gmail.com}
\urladdr{https://sites.google.com/site/mounirnisse1/}
\thanks{Research supported by Xiamen University Malaysia Research Fund (Grant no.\ XMUMRF/2020-C5/IMAT/0013)}
\author{Frank Sottile}
\address{Frank Sottile\\
         Department of Mathematics\\
         Texas A\&M University\\
         College Station\\
         Texas \ 77843\\
         USA}
 \email{sottile@math.tamu.edu}
\urladdr{www.math.tamu.edu/\~{}sottile}
\thanks{Research supported in part by NSF grants DMS-1501370, DMS-2201005, and Simons  Foundation Collaboration Grant for
  Mathematics 636314.} 
\subjclass[2010]{14T15, 14T20, 32A60, 52C35}
\keywords{$A$-discriminant, Gale dual, tropical variety, matroid, coamoeba}
\begin{document}

\begin{abstract}
 We show that the closure of the coamoeba of a linear space/hyperplane complement is the union of products of coamoebas of hyperplane
 complements coming from flags of flats, and relate this to the Bergman fan. 
 Using the Horn-Kapranov parameterization of a reduced discriminant, this gives a partial description of the phase limit sets of
 discriminants and duals of toric varieties.
 When $d=3$, we show that each 3-dimensional component of the phase limit set of the discriminant is a 
 prism over a discriminant coamoeba in dimension  $2$, which has a polyhedral description by a result of Nilsson and Passare.
\end{abstract}

\maketitle
\section*{Introduction}

Gelfand, Kapranov, and Zelevinsky~\cite{GKZ} defined the amoeba of a subvariety $X$ of a complex torus
to be the set of lengths (coordinatewise logarithm of absolute value) of points in $X$. 
Previously, Bergman~\cite{Berg} and Bieri and Groves~\cite{BiGr} showed that the asymptotic directions of the
amoeba form a polyhedral complex in the sphere.
The cone over this {\sl logarithmic limit set} is now known as the tropical variety of $X$~\cite{Stu_CBMS}.

Passare (in a talk in 2004) defined the coamoeba of $X$ to be the set of angles (coordinatewise arguments) of points in $X$.
Its phase limit set~\cite{NS} captures the asymptotic behavior of the angles in $X$.
This has a combinatorial structure reflected in the tropical variety and the closure of a coamoeba is its union with its phase
limit set.
Coamoebas independently arose in the study of mirror symmetry and dimer models~\cite{FHKV}, have found further
applications~\cite{FU} in this area, and were used to study Euler-Mellin 
integrals~\cite{BFP}.
The phase limit set was used to prove a version of Chow's Lemma for subvarieties of the torus~\cite{MN}, and
to prove that complements of coamoebas are higher convex~\cite{NS15} in the sense of 
Gromov~\cite[Sect.~$\tfrac{1}{2}$]{Gromov}.
The key idea is that the phase limit set forms a combinatorial skeleton of the coamoeba.

The intersection of a linear space $V$ in $\CC^n$ with the torus $(\CC^\times)^n$ is the complement $\calH^c:= V \setminus \calH$ of a
central arrangement $\calH$ of $n$ hyperplanes in $V$.
We give (Theorem~\ref{Th:Linear}) a recursive description of the phase limit set of $\calH^c$ in terms of the matroid of $\calH$.
While such a description is expected, as the tropical variety of $\calH^c$ is described in terms of its matroid~\cite{AK,FS,Stu_CBMS},
our proof is direct and does not use a description of the tropical variety.

We use this description of the phase limit set of a hyperplane complement to study coamoebas of discriminants, extending
some results of~\cite{NP,PS} on discriminant coamoebas in dimension two to higher dimensions.
The Horn-Kapranov parameterization of the reduced discriminant~\cite{Kapranov} is the
composition of an inclusion of a $d$-dimensional linear space (actually the complement of a hyperplane arrangement) into
$(\CC^\times)^n$ followed by a homomorphism into $(\CC^\times)^d\subset\PP^d$.
The action of the torus of the toric variety enables us to describe the phase limit sets of both its dual variety and
of the reduced discriminant in terms of the phase limit set of the linear space.
We show that all $d$-dimensional components of the phase limit set admit a surjection to the closed coamoeba of a
discriminant of smaller dimension $d'<d$, and that for some, the fibres are the full compact torus $\TT^{d-d'}$.
Thus these components are prisms over $d'$-dimensional closed coamoeba.
When $d=3$, all top-dimensional components are prisms over 2-dimensional closed discriminant coamoebae
(Theorem~\ref{Th:conjecture}). 

A complete description of the coamoeba of a hyperplane complement remains open.
The dimension of the coamoeba (and also the amoeba) is determined by the matroid of the
arrangement $\calH$~\cite{DEPRY}.
The closure of the coamoeba is a semialgebraic set, as is its algebraic amoeba, and these have only been described for
hyperplanes and lines~\cite{NS19}.
Lastly, we expect (Conjecture~\ref{Conj:Main}) that the coamoeba of a discriminant in dimensions three and higher is a
subset of its phase limit set.
This is an analog of the solidity of discriminant amoebas~\cite{PST}.
Specifically, in dimension two, the discriminant coamoeba is the complement of a zonotope, and when $d=3$, this
conjecture and Theorem~\ref{Th:conjecture} imply that it is a finite union of prisms over such zonotope complements.

In Section~\ref{S:Beginnings}, we recall the definitions of amoebas, coamoebas, and tropical varieties, discuss the structure of the 
phase limit set of a variety, and establish how these objects transform under homomorphisms of tori.
In Section~\ref{S:LinearSpace}, we study the phase limit set of a linear space, proving Theorem~\ref{Th:Linear} which
describes the components of the phase limit set.
We also relate this to the  Bergman fan of the linear space.
In Section~\ref{S:discriminant} we recall the Horn-Kapranov parameterization of a reduced discriminant, which relates the
phase limit set of a linear space to that of a discriminant.
Corollary~\ref{Co:Coamoebae} describes the closed coameoba of the dual variety to a torus in terms of the coamoeba of a linear space.
Our structure theorems about phase limit sets of reduced discriminants are presented in Section~\ref{S:discriminantAmoeba}

\section{Coamoebas and phase limit sets}\label{S:Beginnings}

The compact torus $\defcolor{\TT}\subset\CC$ is the set of unit norm complex numbers.
The set \defcolor{$\CC^\times$} of nonzero complex numbers is isomorphic to the product $\RR\times\TT$ under
$z\mapsto(\log|z|,z/|z|)$.
Thus $(\CC^\times)^m\simeq \RR^m\times\TT^m$, and we let $\defcolor{\Log}\colon(\CC^\times)^m\to\RR^m$ and
$\defcolor{\Arg}\colon(\CC^\times)^m\to\TT^m$ be the two projection maps.
The \demph{amoeba} $\defcolor{\scrA(X)}\subset\RR^m$ and \demph{coamoeba} $\defcolor{\coscrA(X)}\subset\TT^m$
of a subvariety $X\subset(\CC^\times)^m$ are its images under $\Log$ and $\Arg$, respectively.
Given an affine variety $V\subset\CC^m$, we may also consider its amoeba and coamoeba, which are the amoeba and coamoeba
of its intersection $V\cap(\CC^\times)^m$ with the complement of the coordinate axes.

While these are images of algebraic varieties under real analytic maps, and thus are subanalytic sets, the amoeba is homeomorphic to a
semialgebraic set and the coamoeba is a semialgebraic set~\cite{BPR}---this is explained in~\cite{NS19}.

\begin{proposition}[{\cite{Joh17, Mik17}}] 
\label{P:dimension}
 The amoeba and coamoeba of a subvariety $X\subset(\CC^\times)^m$ have the same dimension as subanalytic subsets of\/ $\RR^m$ and $\TT^m$,
 respectively. 
\end{proposition}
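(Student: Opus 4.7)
The plan is to reduce the equality of dimensions to a linear-algebraic identity about complex subspaces of $\CC^m$ and then apply a standard subanalytic-dimension formula that computes $\dim f(X)$ as the maximal rank of $df$ over the smooth locus of $X$.

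First I would pass to logarithmic coordinates $z_j=\exp(w_j)$ with $w_j=u_j+iv_j$, in which $\Log$ becomes the $\RR$-linear projection $\pi_1\colon\CC^m\to\RR^m$, $w\mapsto u$, and $\Arg$ becomes the projection $\pi_2\colon\CC^m\to\RR^m$, $w\mapsto v$. Let $X^{sm}\subset X$ denote the smooth locus, an open dense complex submanifold of complex dimension $d=\dim_\CC X$, and for $x\in X^{sm}$ let $L:=T_xX$, a complex subspace of $\CC^m$. In these coordinates $d\Log_x|_{T_xX}$ and $d\Arg_x|_{T_xX}$ are the restrictions $\pi_1|_L$ and $\pi_2|_L$ respectively.

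The linear-algebra core is the identity $\pi_1(L)=\pi_2(L)$ for any complex subspace $L\subset\CC^m$: if $v=a+ib\in L$ with $a,b\in\RR^m$, then $iv=-b+ia\in L$, so $\pi_2(iv)=a=\pi_1(v)$; this gives $\pi_1(L)\subseteq\pi_2(L)$, and the reverse inclusion is symmetric. Consequently $\mathrm{rank}(d\Log_x|_{T_xX})=\mathrm{rank}(d\Arg_x|_{T_xX})$ at every $x\in X^{sm}$; call this common rank $r(x)$.

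Finally I would invoke the fact that for a real-analytic map $f\colon Y\to\RR^N$ between real-analytic manifolds and an irreducible real-analytic subvariety $X\subset Y$, the subanalytic dimension of $f(X)$ equals $\max_{x\in X^{sm}}\mathrm{rank}(df_x|_{T_xX})$. This is obtained by stratifying $X^{sm}$ by rank and applying the constant-rank theorem on the open dense stratum of maximal rank, while the strictly-lower-rank strata and the singular locus yield images of strictly smaller dimension (handled by induction on $\dim X$). Applied to $f=\Log$ and $f=\Arg$ together with the identity above, this gives
\begin{equation*}
\dim\scrA(X)\;=\;\max_{x\in X^{sm}}r(x)\;=\;\dim\coscrA(X).
\end{equation*}
The main obstacle is the careful verification of this subanalytic-dimension formula, particularly handling the contribution of singular strata and ensuring subanalyticity of the images; once it is granted, the complex-linearity identity $\pi_1(L)=\pi_2(L)$ makes the matching of dimensions immediate.
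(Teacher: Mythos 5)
Your proof is correct and follows essentially the same route as the paper: both reduce the claim to showing that $d\Log$ and $d\Arg$ have equal rank on $T_xX$ and then deduce this from the fact that $T_xX$ is a complex subspace (stable under multiplication by $\sqrt{-1}$). The only cosmetic differences are that you pass to logarithmic coordinates and compare the images $\pi_1(T_xX)=\pi_2(T_xX)$, whereas the paper works directly on $(\CC^\times)^m$ and compares the kernels $z\cdot\RR^m$ and $\sqrt{-1}z\cdot\RR^m$; these are dual formulations of the identical observation.
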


\begin{proof}
 It suffices to show that the differentials of $\Log$ and $\Arg$ at $x\in X$ have the same rank.
 For $z\in(\CC^\times)^m$, the kernel of $d_z\Arg$ is the linear subspace $z\cdot\RR^m$ of
 $\CC^m=T_z(\CC^\times)^m$, the translation of $\RR^m$ by $z$.
 Similarly, the kernel of $d_z\Log$ is the linear space $z\cdot(\sqrt{-1}\,\RR^m)=\sqrt{-1}z\cdot\RR^m$.
 Thus $v\in T_xX$ lies in the kernel of $d_x\Log$ if and only if $\sqrt{-1}v$ lies in the kernel of $d_x\Arg$.
 The result follows as $T_xX$ is stable under multiplication by $\sqrt{-1}$.
\end{proof}

Work of Bergman~\cite{Berg} and of Bieri and Groves~\cite{BiGr} showed that the set of asymptotic directions of an
amoeba $\scrA(X)$ in $\RR^m$ forms a polyhedral complex in the sphere.
This \demph{logarithmic limit set} is the collection of all accumulation points of sequences 
$\{\Log(x_i)/\|\Log(x_i)\|\mid i\in\NN\}$ in the sphere where $\{x_i\mid i\in\NN\}\subset X$ is unbounded.
The cone over the logarithmic limit set is the \demph{tropical variety}
\defcolor{$\trop(X)$} of $X$~\cite{McS}.

The tropical variety has another concrete geometric interpretation.
The integer lattice $\ZZ^m$ in $\RR^m$ is identified with the lattice of cocharacters (one parameter subgroups) of
$(\CC^\times)^m$. 
Given a cocharacter $\alpha\colon \CC^\times\to(\CC^\times)^m$, we define a subvariety of $\CC^\times\times(\CC^\times)^m$, 
\[
   \defcolor{\calX(\alpha)}\ \vcentcolon=\ \{(t,x)\in \CC^\times\times(\CC^\times)^m \mid \alpha(t)\cdot x\in X\}\,.
\]
Projection to the first coordinate gives a family of varieties $\calX(\alpha)\to\CC^\times$ over $\CC^\times$.

A cocharacter $\alpha$ lies in the tropical variety $\trop(X)$ when $\calX(\alpha)$ is not closed in 
$\CC\times(\CC^\times)^m$, that is, when the scheme-theoretic limit
 \begin{equation}\label{Eq:initialScheme}
   \lim_{t\to 0} \calX(\alpha)_t\ \vcentcolon=\ \overline{\calX(\alpha)}_0
 \end{equation}
is nonempty, where \defcolor{$\calX(\alpha)_t$} is the fiber of $\calX(\alpha)$ over $t\in\CC^\times$
and $\overline{\calX(\alpha)}_0$ is the fiber over the point $0\in\CC$ of the closure of $\calX(\alpha)$ in $\CC\times(\CC^\times)^m$. 
Since the fiber $\calX(\alpha)_t$ equals $\alpha(t)^{-1}\cdot X$, our condition that $\calX(\alpha)$ is not closed
is equivalent to the limit of translates $\alpha(t)\cdot X$ as $t\to\infty$ being nonempty.
This limit~\eqref{Eq:initialScheme} is an \demph{initial scheme} of $X$ and is written
\defcolor{$\ini_\alpha X $}. 

Since we realize the initial scheme as the fiber of a variety that is the closure of a family of isomorphic varieties, we
may determine its dimension.

\begin{proposition}\label{P:initial_dimension}
  Suppose that $X$ is irreducible of dimension $d$ and that $\alpha\in\trop(X)$.
  Then every irreducible component of $\ini_\alpha X$ has dimension $d$.
\end{proposition}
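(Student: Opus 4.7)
The plan is to identify $\calX(\alpha)$ as a trivial $\CC^\times$-family of copies of $X$, and then apply the standard theorem on dimensions of fibers of a dominant morphism from an irreducible variety.

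First, I would exhibit an explicit isomorphism
\[
  \CC^\times\times X \;\xrightarrow{\ \sim\ }\; \calX(\alpha)\,,\qquad (t,y)\ \longmapsto\ (t,\alpha(t)^{-1}\cdot y)\,,
\]
with inverse $(t,x)\mapsto (t,\alpha(t)\cdot x)$. This shows that $\calX(\alpha)$ is irreducible of dimension $d+1$, and therefore so is its closure $Y\vcentcolon=\overline{\calX(\alpha)}$ in $\CC\times(\CC^\times)^m$.

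Next, I would consider the projection $\pi\colon Y\to\CC$ onto the first coordinate. Since $\pi(\calX(\alpha))=\CC^\times$ is dense in $\CC$, the map $\pi$ is dominant from the irreducible variety $Y$ of dimension $d+1$ to the smooth curve $\CC$. By the standard theorem on the dimension of fibers of a dominant morphism (e.g.\ \cite[Ch.~I, Thm.~7.2]{Hartshorne}-style), every irreducible component of every fiber has dimension at least $\dim Y-\dim\CC=d$. Applied at $0\in\CC$, this gives the lower bound for each irreducible component of $\ini_\alpha X=\pi^{-1}(0)$.

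For the matching upper bound, I would observe that $\calX(\alpha)\subset\CC^\times\times(\CC^\times)^m$ is disjoint from the slice $\{0\}\times(\CC^\times)^m$, so $Y$ is not contained in this slice and $\pi^{-1}(0)$ is a proper closed subset of the irreducible variety $Y$ of dimension $d+1$. Hence every irreducible component of $\pi^{-1}(0)=\ini_\alpha X$ has dimension at most $d$. Combined with the previous paragraph this forces every component to have dimension exactly $d$; the hypothesis $\alpha\in\trop(X)$ ensures that at least one such component exists. The only real point requiring attention is the verification that $\calX(\alpha)\cong\CC^\times\times X$ passes to closures as an irreducible $(d+1)$-dimensional variety, which is where I expect the care to lie; beyond that the result is a clean application of the fiber-dimension theorem.
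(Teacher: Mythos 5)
Your proof is correct and is precisely the argument the paper gestures at in the sentence immediately preceding the proposition (``Since we realize the initial scheme as the fiber of a variety that is the closure of a family of isomorphic varieties, we may determine its dimension''); the paper itself gives no formal proof. Your isomorphism $\CC^\times\times X\simeq\calX(\alpha)$, the lower bound from the fiber-dimension theorem for a dominant morphism of irreducible varieties, and the upper bound from $\pi^{-1}(0)$ being a proper closed subset of the $(d{+}1)$-dimensional irreducible variety $Y=\overline{\calX(\alpha)}$ all check out, with $\alpha\in\trop(X)$ supplying the needed nonemptiness of the fiber over $0$.
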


With these definitions, the tropical variety may also be defined as the closure in $\RR^m$ of the rational
cone over those cocharacters $\alpha$ such that $\ini_\alpha X$ is nonempty,
\[
   \trop(X)\ =\ \overline{\bigcup\{ \QQ_{\geq0}\cdot\alpha\mid \ini_\alpha X\neq\emptyset\}}\,.
\]
The tropical variety admits the structure of a rational polyhedral fan~\cite{McS}, but this structure is not canonical.
Fix such a fan structure on $\trop(X)$.
Its cones $\sigma$ have the property that $\ini_\alpha X =\ini_\beta X$ when both  $\alpha$ and $\beta$ lie in the
relative interior of $\sigma$.
(The \demph{relative interior} of a cone $\sigma$ is the complement of all proper faces of $\sigma$.)
Write \defcolor{$\ini_\sigma X$} for this common initial scheme.
Moreover, the torus \defcolor{$\CC^\times_\sigma$} generated by the cocharacters in $\sigma$ acts freely on
$\ini_\sigma X$ and the orbit space is a subvariety of $(\CC^\times)^m/\CC^\times_\sigma$.
(This may be seen in~\eqref{Eq:initialScheme}.)
We also have that if $\tau,\sigma\in\trop(X)$ are cones with $\tau$ a face of $\sigma$, then
 \begin{equation}\label{Eq:facesOfFaces}
   \ini_\sigma(\ini_\tau X)\ =\ \ini_\sigma X\,.
 \end{equation}

While amoebas are always closed subsets of $\RR^m$ (the map $\Log$ is proper), coamoebas are typically not closed.
For an example, consider the coamoeba of the line $\ell$ in $(\CC^\times)^2$ defined by $x{+}y{+}1=0$.
In the fundamental domain $[-\pi,\pi]^2$ for $\TT^2$, $\coscrA(\ell)$ is the complement of the hexagon that is the
Minkowski sum of the segments between the origin and the points $(-\pi,0)$, $(0,-\pi)$, and $(\pi,\pi)$,
respectively, and the union of the vertices $(\pi,0)$, $(0,\pi)$, and $(\pi,\pi)$, which are the images of the points
of $\ell$ in the three quadrants of $\RR^2$ that meet $\ell$.
  \begin{equation}\label{Eq:two_triangles}
   \raisebox{-50pt}{%
    \begin{picture}(265,96)(-70,-12)
     \put(-75,38){$\coscrA(\ell):$}
     \put(-2,-2){\includegraphics{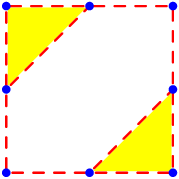}}
     \put(-25,-2){$-\pi$} \put(-15,38){$0$} \put(-15,77){$\pi$}
     \put(-10,-12){$-\pi$} \put(35,-12){$0$} \put(77,-12){$\pi$}
     \put(100,2){\vector(1,0){70}} \put(115,8){$\arg(x)$}
     \put(93,15){\vector(0,1){56}} \put(97,42){$\arg(y)$}
   \end{picture}}
  \end{equation}
The set-theoretic difference $\overline{\coscrA(\ell)}\smallsetminus\coscrA(\ell)$ is most of the three translated
subtori 
 \begin{equation}\label{Eq:subtori}
   (\pi,0)+(0,\pi)\cdot\TT\;,\quad
   (0,\pi)+(\pi,0)\cdot\TT\;,\quad \mbox{ and }\quad
   (\pi,0)+(-\pi,-\pi)\cdot\TT\;,
 \end{equation}
corresponding to the points where $\ell$ meets the coordinate lines in $\PP^2$.
These are the dotted lines in~\eqref{Eq:two_triangles}.

For a subvariety $X\subset(\CC^\times)^m$, the 
\demph{phase limit set $\scrP^\infty(X)$}
is the collection of all accumulation points of sequences of arguments $\{\Arg(x_i)\mid i\in\NN\}$ 
where $\{x_i\mid i\in\NN\}\subset X$ is unbounded.
By definition, the closure of the coamoeba is its union with its phase limit set,
\[
   \overline{\coscrA(X)}\ =\ \coscrA(X) \cup \scrP^\infty(X)\,.
\]
We will call $\overline{\coscrA(X)}$ the \demph{closed coamoeba} of $X$.
The phase limit set of the line $\ell$ with coamoeba~\eqref{Eq:two_triangles}  consists of the three
translated subtori~\eqref{Eq:subtori}.
Figure~\ref{F:LineCoamoeba} shows the coamoebas of two lines in $\CC^3$ from~\cite{NS}, in a fundamental domain
$[-\pi,\pi]^3$.
\begin{figure}[htb]
\[
  \includegraphics[height=110pt]{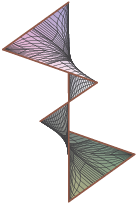}
   \qquad\quad
  \includegraphics[height=110pt]{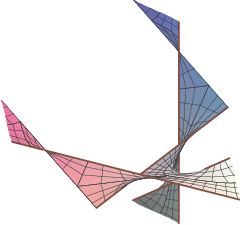}
\]
\caption{Coamoebas of lines in $\CC^3$.}
\label{F:LineCoamoeba}
\end{figure}
Their boundaries, which are their phase limit sets, consist of four translated subtori in the
directions $(\pi,0,0)$, $(0,\pi,0)$, $(0,0,\pi)$, and $(-\pi,-\pi,-\pi)$, which are lines in these
pictures\footnote{For motion pictures, see {\tt https://franksottile.github.io/research/stories/coAmoeba/index.html}.}.
These correspond to the four rays in the tropical variety of these lines.
These observations hold in general.

\begin{proposition}[{\cite[Theorem~1]{NS}}]\label{P:PLS}     
 The phase limit set of a subvariety $X$ of a torus is the union of the coamoebas of its initial schemes,
\[
   \scrP^\infty(X)\ =\  \bigcup\coscrA(\ini_\sigma X)\,.
\]
 This union is over all nonminimal cones $\sigma$ in the tropical variety $\trop(X)$ of $X$.
\end{proposition}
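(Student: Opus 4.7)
The plan is to prove the two inclusions of the claimed equality separately, with the family $\calX(\alpha)\to\CC^\times$ serving in both directions as the bridge between unbounded behavior in $X$ and the initial scheme construction.

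For the inclusion $\coscrA(\ini_\sigma X)\subseteq\scrP^\infty(X)$, I fix a nonminimal cone $\sigma$, a cocharacter $\alpha$ in its relative interior, and a point $y\in\ini_\alpha X$. By the definition $\ini_\alpha X=\overline{\calX(\alpha)}_0$, we have $(0,y)\in\overline{\calX(\alpha)}$, and a standard complex-analytic curve selection through $(0,y)$ produces a sequence $(t_i,y_i)\in\calX(\alpha)$ with $t_i\in\RR_{>0}$, $t_i\to 0$, and $y_i\to y$ (one parametrizes an analytic arc through $(0,y)$ and rotates the disk so that $t$ lands on the positive real axis). Since $\Arg(\alpha(t_i))=0$ when $t_i>0$, the points $x_i:=\alpha(t_i)y_i\in X$ satisfy $\Arg(x_i)=\Arg(y_i)\to\Arg(y)$, and unboundedness of $\{x_i\}$ follows from $\alpha\neq 0$ and $|t_i|\to 0$: some coordinate $|x_{i,j}|=|t_i|^{a_j}|y_{i,j}|$ tends to $0$ or to $\infty$. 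Hence $\Arg(y)\in\scrP^\infty(X)$.

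For the reverse inclusion, take $\theta\in\scrP^\infty(X)$ realized by an unbounded sequence $\{x_i\}\subset X$ with $\Arg(x_i)\to\theta$. By a subanalytic curve selection lemma, I replace this sequence by a real-analytic arc $\phi\colon(0,\epsilon)\to X$ with $\phi(t)$ unbounded in $(\CC^\times)^m$ and $\Arg(\phi(t))\to\theta$ as $t\to 0^+$. Each coordinate admits a Puiseux expansion $\phi_j(t)=c_j t^{a_j}(1+o(1))$ with $c_j\in\CC^\times$ and $a_j\in\QQ$, and I set $\alpha:=(a_1,\ldots,a_m)$ and $y:=(c_1,\ldots,c_m)$. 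The arc $t\mapsto(t,\alpha(t)^{-1}\phi(t))$ lies in $\calX(\alpha)$ and extends by continuity to $(0,y)$, so $y\in\ini_\alpha X$ and consequently $\alpha\in\trop(X)$. Since $\phi$ is unbounded, $\alpha\neq 0$; and for $t>0$ one has $\Arg(c_j t^{a_j})=\Arg(c_j)$, so passing to the limit gives $\Arg(y)=\theta$. The cone $\sigma$ of the chosen fan structure on $\trop(X)$ with $\alpha$ in its relative interior is then nonminimal, and $\theta\in\coscrA(\ini_\sigma X)$.

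The main obstacle is justifying the curve selection step. The subanalytic curve selection lemma takes as input a subanalytic set together with a boundary point, and produces a real-analytic arc approaching that point; one must realize the condition ``unbounded in $X$ with $\Arg\to\theta$'' as a boundary point of an explicit subanalytic set. A convenient setup is to embed $X$ via the graph of $\Arg$ into a toric partial compactification of $(\CC^\times)^m$ crossed with $\TT^m$, where the boundary stratification records which coordinates tend to $0$ or to $\infty$; the limit (suitable point at infinity,$\,\theta$) then lies in the subanalytic closure of the graph, and curve selection applies. Once the arc $\phi$ is in hand, the Puiseux expansion and the identification of its leading terms with a point of $\ini_\alpha X$ are routine.
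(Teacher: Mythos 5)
The paper itself does not prove this proposition; it cites it as Theorem~1 of~\cite{NS}, so there is no in-paper proof to compare against. Judged on its own merits, your two-sided curve-selection argument is a sound and essentially standard way to establish the result, and the overall plan (use the family $\calX(\alpha)\to\CC^\times$ as the bridge, exploit that $\Arg(\alpha(t))=0$ for $t\in\RR_{>0}$ in one direction, and extract $\alpha$ from the leading Puiseux exponents of an escaping arc in the other) is the right one.

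A few places deserve more care. In the easy inclusion, ``rotate the disk so that $t$ lands on the positive real axis'' is loose: one should instead note that $(0,y)$ lies on a component of $\overline{\calX(\alpha)}$ dominating $\CC$, parametrize a curve through $(0,y)$, and use that the nonconstant projection to $\CC$ is an open map at $0$, so one may choose preimages $s_i\to 0$ with $t(s_i)\in\RR_{>0}$; equivalently, one can invoke flatness of $\overline{\calX(\alpha)}\to\CC$ at $(0,y)$ and openness of flat morphisms. Either repair is routine. In the hard inclusion, you correctly flag the curve-selection step as the crux. What the argument really needs is an arc that is real-analytic at $t=0$ \emph{in a toric chart of a compactification together with the} $\TT^m$ \emph{factor}; it is this analyticity at the boundary point that forces the leading-term normal form $\phi_j(t)=c_j t^{a_j}(1+o(1))$ with $c_j\neq 0$ and $a_j\in\ZZ$ (after reparametrization $t\mapsto t^N$), and hence that $\Arg\phi_j(t)\to\Arg c_j$. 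A bare real-analytic arc in $(\CC^\times)^m$ need not a priori have this form, so the compactification setup you outline in the final paragraph is not optional decoration; it is the load-bearing step.

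Finally, one subtlety about ``nonminimal.'' Your argument shows $\alpha\neq 0$, which is exactly what is needed if the fan structure on $\trop(X)$ has $\{0\}$ as its minimal cone. The paper later adopts the convention that the minimal cone is the full lineality space $\mu$, and with that convention your proof does not by itself rule out $\alpha\in\mu\smallsetminus\{0\}$, in which case $\ini_\alpha X=X$ and $\sigma=\mu$ is minimal. Under that reading one either has to argue separately that $\alpha$ can be chosen outside $\mu$, or observe (as the edge case $X=\CC^\times\times\{1\}\subset(\CC^\times)^2$ shows) that the statement should really be read with ``nonminimal'' meaning ``nonzero.'' Your proof is consistent with the latter interpretation, but it is worth being explicit about which convention you are using and verifying that it matches the one under which the proposition is invoked downstream (for instance in Lemma~\ref{L:rays}).
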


While this description refers to a particular fan structure on $\trop(X)$, the phase limit set does not depend upon the fan
structure.
We will fix a particular fan structure on $\trop(X)$.
To simplify discussion, we assume that the minimal linear space contained in $\trop(X)$, as a set, is its minimal cone, called
its \demph{lineality space}.
This is needed when discussing tropical compactifications.
Most other statements hold without this restriction.

The phase limit set of $X$ may be understood via the tropical compactification~\cite{Tevelev}
of $X$ given by the fan $\trop(X)$~\cite[Sect.~4.2]{NS}.
Let \defcolor{$Z$} be the toric variety associated to the fan $\trop(X)$~\cite{Fulton}.
For each cone $\sigma$ of $\Sigma$, there is an affine toric variety \defcolor{$V_\sigma$} (which is
$\mbox{spec}(\CC[\sigma^\vee\cap  M])$, where $M$ is the character lattice of $(\CC^\times)^m$ and $\sigma^\vee$ is the
cone dual to $\sigma$). 
The varieties $V_\sigma$ form a $(\CC^\times)^m$-equivariant open cover of $Z$.
Furthermore, $Z$ has a dense orbit and one orbit $\calO_\sigma\simeq (\CC^\times)^m/\CC^\times_\sigma$ for each cone
$\sigma$ of $\trop(X)$, where 
$\calO_\sigma$ is the minimal orbit of $V_\sigma$ and $\CC^\times_\sigma$ is the subgroup of $(\CC^\times)^m$ spanned by the
cocharacters in $\sigma$.
If $\tau\subset\sigma$ are cones of $\trop(X)$, then $\calO_\sigma$ lies in the closure of $\calO_\tau$.
The minimal cone \defcolor{$\mu$} of $\trop(X)$ is  its lineality space, and $X=\ini_\mu X$
carries an action of $\CC^\times_\mu$. 
The quotient $X/\CC^\times_\mu$ is naturally a subvariety of the dense orbit $\calO_\mu$ of $Z$, and its closure
$\overline{X}$ in $Z$ is a complete variety called the \demph{tropical compactification} of $X$.
(This is an actual compactification when $\mu=\{0\}$.)

For each cone $\sigma$ of $\trop(X)$, $\overline{X}\cap\calO_\sigma$ is identified with the quotient
$\ini_\sigma X/\CC^\times_\sigma$.
Moreover, if  $\tau\subset\sigma$ are cones of $\trop(X)$, then $\overline{X}\cap\calO_\sigma$ lies in the closure of
$\overline{X}\cap\calO_\tau$. 
Applying the map $\Arg$, we see that $\defcolor{\TT_\sigma}\vcentcolon=\Arg(\CC^\times_\sigma)$ acts on the coamoeba
$\coscrA(\ini_\sigma X)$   with quotient
\[
   \coscrA(\overline{X}\cap\calO_\sigma)\ \subset\ \coscrA(\calO_\sigma)\ =\ \TT^m/\TT_\sigma\,.
\]
Let $\defcolor{\pls_\sigma(X)}$ be the closure of $\coscrA(\ini_\sigma X)$.
This has an action of $\TT_\sigma$ with quotient the
closure of $\coscrA(\overline{X}\cap\calO_\sigma)$.
We call $\pls_\sigma(X)$  a \demph{prism} over $\coscrA(\overline{X}\cap\calO_\sigma)$ with fiber $\TT_\sigma$.
In~\eqref{Eq:two_triangles}, the dotted lines are prisms over points where the line meets the three
coordinate axes of $\PP^2$.

\subsection{Phase limit sets}

By Proposition~\ref{P:PLS}, the phase limit set $\pls(X)$ is the union of the $\pls_\sigma(X)$, for $\sigma$
not the lineality space $\mu$.
These sets $\pls_\sigma(X)$ are the \demph{strata} of the phase limit set $\pls(X)$.
If $\tau\subset \sigma$ are cones in $\trop(X)$,  then $\coscrA(\ini_\sigma X)$ is a subset of $\pls_\tau(X)$ as
$\ini_\sigma X$ is an initial scheme of $\ini_\tau X$~\eqref{Eq:facesOfFaces}.
A cone $\rho$ of $\trop(X)$ is a \demph{ray} if it is minimal among those properly contain the lineality space $\mu$.
We deduce a strengthening of Proposition~\ref{P:PLS}.

\begin{lemma}\label{L:rays}
  With these definitions, we have 
  \begin{equation}\label{Eq:components}
     \scrP^\infty(X)\ =\  \bigcup \pls_\rho(X)\,,
  \end{equation}
  the union over all rays $\rho$ of $\trop(X)$.
\end{lemma}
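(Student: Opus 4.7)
The plan is to deduce the lemma from Proposition~\ref{P:PLS} together with the face relation~\eqref{Eq:facesOfFaces}. The containment $\pls_\rho(X)\subset\scrP^\infty(X)$ for each ray $\rho$ is immediate: Proposition~\ref{P:PLS} gives $\coscrA(\ini_\rho X)\subset\scrP^\infty(X)$, and since $\scrP^\infty(X)$ is closed in the compact torus $\TT^m$ (being the set of accumulation points in $\TT^m$ of the arguments of an unbounded sequence in $X$), its closure $\pls_\rho(X)=\overline{\coscrA(\ini_\rho X)}$ lies in $\scrP^\infty(X)$ as well. This gives one inclusion of~\eqref{Eq:components}.

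For the reverse inclusion, by Proposition~\ref{P:PLS} it suffices to show that for every non-minimal cone $\sigma$ of $\trop(X)$, the coamoeba $\coscrA(\ini_\sigma X)$ is contained in $\pls_\rho(X)$ for some ray $\rho$. If $\sigma$ is itself a ray, take $\rho=\sigma$; there is nothing to prove. Otherwise $\sigma$ properly contains a ray $\rho$ as a face, and by~\eqref{Eq:facesOfFaces} we have $\ini_\sigma X=\ini_\sigma(\ini_\rho X)$. Since $\sigma$ strictly contains $\rho$, the cone $\sigma$ represents a non-minimal cone in the star fan $\trop(\ini_\rho X)$, whose lineality space is the span of $\rho$ together with $\mu$. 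Applying Proposition~\ref{P:PLS} to $\ini_\rho X$ in place of $X$ then yields
\[
   \coscrA(\ini_\sigma X)\ =\ \coscrA\bigl(\ini_\sigma(\ini_\rho X)\bigr)\ \subset\ \scrP^\infty(\ini_\rho X)\ \subset\ \overline{\coscrA(\ini_\rho X)}\ =\ \pls_\rho(X)\,,
\]
which completes the argument.

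The main technical point is verifying that $\sigma$ indeed becomes a non-minimal cone of the tropical variety $\trop(\ini_\rho X)$, so that Proposition~\ref{P:PLS} can be reapplied. This relies on the standard but nontrivial identification of $\trop(\ini_\rho X)$ with the star of $\rho$ in $\trop(X)$, in particular on the fact that the lineality space of $\trop(\ini_\rho X)$ enlarges $\mu$ by the line $\RR\cdot\rho$. If one prefers to avoid appealing to this identification, one can instead argue directly: given $x\in\ini_\sigma X$, realize $\ini_\sigma X$ as the $t\to 0$ limit of translates $\beta(t)^{-1}\cdot\ini_\rho X$ for some cocharacter $\beta$ whose image generates $\sigma/\RR\cdot\rho$, and extract a sequence in $\ini_\rho X$ going to infinity whose arguments converge to $\Arg(x)$, showing directly that $\Arg(x)\in\pls_\rho(X)$.
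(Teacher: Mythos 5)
Your proof is correct and follows essentially the same route the paper takes: the paper likewise derives the lemma from Proposition~\ref{P:PLS} together with the face relation~\eqref{Eq:facesOfFaces}, noting that $\ini_\sigma X=\ini_\sigma(\ini_\rho X)$ is an initial scheme of $\ini_\rho X$ whenever the ray $\rho$ is a face of $\sigma$, so that $\coscrA(\ini_\sigma X)\subset\pls_\rho(X)$. You make explicit two points the paper leaves implicit — the closedness of $\scrP^\infty(X)$ for the easy inclusion, and the identification of $\trop(\ini_\rho X)$ with the star of $\rho$ so that Proposition~\ref{P:PLS} can be reapplied — but the underlying argument is the same.
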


Not every ray $\rho$ of $\trop(X)$ is needed for the union~\eqref{Eq:components}.
For example, refining the fan of $\trop(X)$ may give new rays but does not change $\pls(X)$.

\begin{lemma}\label{L:refiningFan}
  Let $\Sigma$ be a fan refining a given tropical fan $\trop(X)$, and suppose that $\tau\in\Sigma$ is a cone that is not
  the lineality space. 
  Then there exists a cone $\sigma$ of $\trop(X)$ such that $\tau\subset\sigma$ and $\tau$ meets the relative interior of $\sigma$, 
  and $\pls_\tau(X)=\pls_\sigma(X)$.
\end{lemma}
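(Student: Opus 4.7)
The plan is to identify the cone $\tau$ of the refinement $\Sigma$ with a unique cone $\sigma$ of the original fan structure on $\trop(X)$, namely the one whose relative interior contains $\mathrm{relint}(\tau)$, and then observe that the initial schemes $\ini_\tau X$ and $\ini_\sigma X$ coincide, which forces the two prisms to coincide. The whole argument is short; its only content is bookkeeping about relative interiors of cones in a refined fan.

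First I would invoke the standard fact that any fan partitions its support into the disjoint union of the relative interiors of its cones. Applying this to the original fan structure on $\trop(X)$, the connected set $\mathrm{relint}(\tau)$ must lie entirely in $\mathrm{relint}(\sigma)$ for a single cone $\sigma$ of $\trop(X)$. Taking closures yields $\tau = \overline{\mathrm{relint}(\tau)} \subset \overline{\mathrm{relint}(\sigma)} = \sigma$, and by construction $\tau \cap \mathrm{relint}(\sigma) \supset \mathrm{relint}(\tau) \neq \emptyset$. The hypothesis that $\tau$ is not the lineality space prevents $\mathrm{relint}(\tau)$ from lying inside $\mu$, so $\sigma \neq \mu$ and $\sigma$ is a legitimate non-lineality cone of $\trop(X)$ along which $\pls_\sigma(X)$ was defined.

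Next I would compare initial schemes. For any $\alpha \in \mathrm{relint}(\tau) \subset \mathrm{relint}(\sigma)$, the definition of the common initial scheme on a cone of the fixed fan structure gives $\ini_\sigma X = \ini_\alpha X$. Since $\ini_\alpha X$ is constant on $\mathrm{relint}(\sigma)$, it is in particular constant on the subset $\mathrm{relint}(\tau)$, so $\ini_\tau X$ is well-defined for the refined fan $\Sigma$ and equals $\ini_\alpha X$ as well. Hence $\ini_\tau X = \ini_\sigma X$, and taking coamoebas and closures gives
\[
   \pls_\tau(X) \;=\; \overline{\coscrA(\ini_\tau X)} \;=\; \overline{\coscrA(\ini_\sigma X)} \;=\; \pls_\sigma(X).
\]

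There is no real obstacle; the only step needing care is verifying that $\mathrm{relint}(\tau)$ lies in a single $\mathrm{relint}(\sigma)$, which is the general partition-of-support property of a fan applied to the refinement relationship $|\Sigma| = |\trop(X)|$, together with the observation that a connected subset of a disjoint union of relatively open cells is contained in one cell.
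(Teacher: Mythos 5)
Your overall plan is the one the paper uses: pick the unique cone $\sigma$ of $\trop(X)$ whose relative interior contains $\mathrm{relint}(\tau)$, note that any cocharacter in $\mathrm{relint}(\tau)$ then lies in $\mathrm{relint}(\sigma)$, so the initial schemes coincide, hence the strata $\pls_\tau(X)$ and $\pls_\sigma(X)$ coincide.

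However, the justification you give for the key first step is not correct. You appeal to the ``observation that a connected subset of a disjoint union of relatively open cells is contained in one cell,'' but this is false for a fan: the relative interiors of lower-dimensional cones are not open in the support, so a connected set can meet several of them. Concretely, take $\Sigma'$ in $\RR^2$ with two maximal cones $\sigma_1 = \mathrm{cone}\{(1,0),(1,1)\}$ and $\sigma_2 = \mathrm{cone}\{(1,1),(0,1)\}$; the cone $\tau=\mathrm{cone}\{(2,1),(1,2)\}$ is contained in $|\Sigma'|$ and $\mathrm{relint}(\tau)$ is connected, yet it meets $\mathrm{relint}(\sigma_1)$, $\mathrm{relint}(\RR_{\geq 0}(1,1))$, and $\mathrm{relint}(\sigma_2)$. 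What actually makes the step work is the part of the definition of refinement you did not use: every cone of $\Sigma$ is contained in some cone of $\trop(X)$ (this is genuinely extra information beyond the equality of supports). Given $\tau\subset\sigma'$ for some cone $\sigma'$ of $\trop(X)$, take $\sigma$ to be the smallest face of $\sigma'$ containing $\tau$; then $\tau$ meets $\mathrm{relint}(\sigma)$, and the elementary fact that a cone $\tau\subset\sigma$ meeting $\mathrm{relint}(\sigma)$ has $\mathrm{relint}(\tau)\subset\mathrm{relint}(\sigma)$ does the rest. With that repair, the remainder of your argument (equality of initial schemes, taking coamoebas and closures) matches the paper's proof.
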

\begin{proof}
  The existence (and uniqueness) of such a cone $\sigma$ is because $\Sigma$ refines $\trop(X)$.
  Any cocharacter $\alpha$ that lies in the relative interior of $\tau$ also lies in the relative interior of $\sigma$.
  Thus
  $\ini_\tau X = \ini_\alpha X  = \ini_\sigma X$, which implies the result.
\end{proof}

The tropical variety $\trop(\ell)$ of the line $\ell$ with coamoeba~\eqref{Eq:two_triangles} consists of
three rays, 
\[
   \RR_{\geq}(1,0)\,,\quad    \RR_{\geq}(0,1)\,,\quad \mbox{and}\quad   \RR_{\geq}(-1,-1)\,.
\]
The corresponding initial schemes are 
\[
   \ini_{(1,0)} \ell\ =\ \calV(y{+}1)\,,\quad
   \ini_{(0,1)} \ell\ =\ \calV(x{+}1)\,,\quad
   \ini_{(-1,-1)} \ell\ =\ \calV(x{+}y)\,.
\]
Their coamoebas are the three translated subtori of~\eqref{Eq:subtori}, which are prisms over points.

\begin{example}\label{Ex:plane}
 The closed coamoeba of the plane $\Pi$ defined by $x+y+z+1=0$ in $\CC^3$ is the complement in the fundamental
 domain $[-\pi,\pi]^3$ of the open zonotope that is the Minkowski sum of the four line segments from the origin to
 the points
\[
   (\pi,0,0)\,,\ (0,\pi,0)\,,\ (0,0,\pi)\,,\    (-\pi,-\pi,-\pi)\,.
\]
 These four segments are the primitive vectors of the rays in the tropical variety of $\Pi$, $\trop(\Pi)$.
 We show the closed coamoeba of $\Pi$, this zonotope, and $\trop(\Pi)$.
\[
  \includegraphics[height=1.5in]{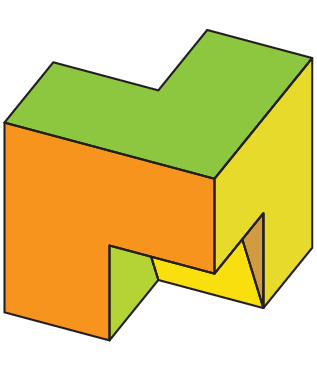}\quad\qquad
  \includegraphics[height=1.5in]{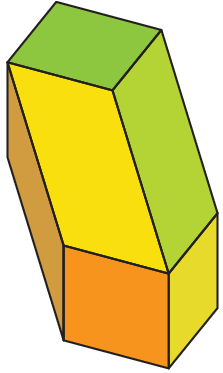}\quad\qquad
  \includegraphics[height=1.5in]{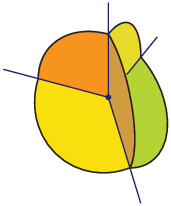}
\]
 For each ray $\rho$ in $\trop(\Pi)$, $\ini_\rho \Pi$ is irreducible and gives a stratum of the phase limit set $\pls(\Pi)$
 which is a prism over the coamoeba of a line in a plane.
 For example, $\ini_{(0,0,1)}\Pi=x{+}y{+}1$, so that the corresponding stratum $\pls_{(0,0,1)}(\Pi)$ (second from the right in
 Figure~\ref{F:cylinders}) is the product of the
 closure of the coamoeba~\eqref{Eq:two_triangles} and the subgroup $\TT_{(0,0,1)}$, which is a vertical segment in the fundamental
 domain, and so is three-dimensional.
 The other three strata are also prisms over coamoebas of lines.
 Figure~\ref{F:cylinders} shows these strata.
\begin{figure}[htb]
\centering
   \begin{picture}(91,106)(0,-15)
     \put(0,0){\includegraphics[height=1.27in]{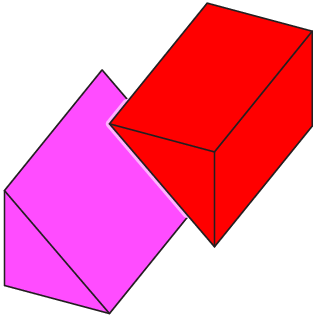}}
     \put(6,-15){$\coscrA(\ini_{(1,0,0)} \Pi)$}
    \end{picture}
     \qquad  
   \begin{picture}(91,106)(0,-15)
     \put(0,0){\includegraphics[height=1.27in]{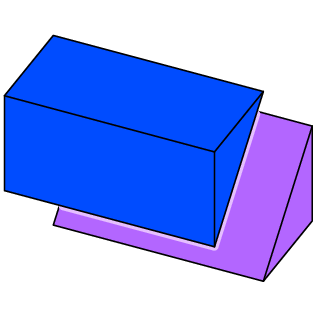}}
     \put(6,-15){$\coscrA(\ini_{(0,1,0)}\Pi)$}
    \end{picture}
     \qquad  
   \begin{picture}(91,106)(0,-15)
     \put(0,0){\includegraphics[height=1.27in]{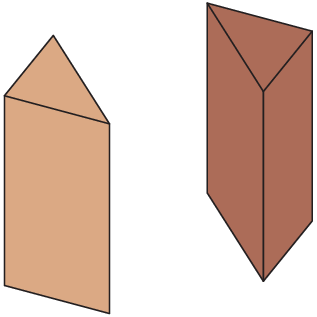}}
     \put(6,-15){$\coscrA(\ini_{(0,0,1)}\Pi)$}
    \end{picture}
     \qquad  
    \begin{picture}(91,105)(0,-15)
     \put(0,0){\includegraphics[height=1.27in]{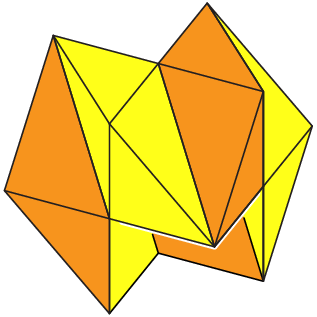}}
     \put(-4,-15){$\coscrA(\ini_{(-1,-1,-1)}\Pi)$}
    \end{picture}
 \caption{Strata of the phase limit set of a plane.}
 \label{F:cylinders}
 \end{figure}
 To understand the last, project the cube $[-\pi,\pi]^3$ along the vector $\RR(-1,-1,-1)$ to a hexagonal fundamental domain
 (in place of the square fundamental domain of~\eqref{Eq:two_triangles}).
 These strata cover $\coscrA(\Pi)$, with a general point of $\coscrA(\Pi)$ lying in two strata.
 If $\sigma$ is one of the six two-dimensional cones of $\trop(\Pi)$, then $\ini_\sigma\Pi$ is a translated orbit of the
 torus $\TT_\sigma$ so that $\pls_\sigma(\Pi)$ is two-dimensional.
 These form the boundaries of the strata in Figure~\ref{F:cylinders}. \hfill$\diamond$
\end{example}

Example~\ref{Ex:plane} illustrates a subtlety in the description of $\pls(X)$.
Both the coamoeba and phase limit set of $\Pi$ are covered by any three top-dimensional strata.

\begin{remark}\label{Rem:Polyhedra}
  This example also illustrates how the coamoeba of the plane $\Pi$ is a union of polyhedra, as its full-dimensional strata
  are each prisms over a coamoeba of a line.
 This plane $\Pi$ is also a (reduced) discriminant (see Example~\ref{Ex:Reduced_Discriminant}).
 Conjecture~\ref{Conj:Main} posits that any discriminant coamoeba is a subset of its phase limit set.
 By Theorem~\ref{Th:conjecture}, when $d=3$ each component of the phase limit set a union of
 polyhedra~\cite{NP}, so Conjecture~\ref{Conj:Main} implies that every discriminant coamoeba when $d=3$ is a
 finite union of  polyhedra. \hfill$\diamond$
\end{remark}

\subsection{Functoriality of tropical objects}

We explore how these tropical objects behave under maps of tori.
A homomorphism $\varphi\colon(\CC^\times)^m\to(\CC^\times)^n$ of tori 
induces a homomorphism $\varphi\colon \TT^m\to\TT^n$ and a linear map $\varphi\colon\RR^m\to\RR^n$ (we use the same notation for
all maps derived from $\varphi$).
These maps commute with $\Arg$ and $\Log$, respectively.
We collect some simple results.
The first three are well-known; they were used in~\cite{DFS,PS}.

\begin{proposition}\label{P:easyTropicalFunctoriality}
  Suppose that $X\subset(\CC^\times)^m$ is a subvariety and  $\varphi\colon(\CC^\times)^m\to(\CC^\times)^n$ is a
  group homomorphism.
  Let $Y$ be the closure of $\varphi(X)$, so that $\varphi\colon X\to Y$ is dominant.
 \begin{enumerate}
  \item The amoeba $\scrA(Y)\subset\RR^n$ is the closure of $\varphi(\scrA(X))$.
  \item The tropical variety $\trop(Y)$ equals  $\varphi(\trop(X))$. \label{easyTropicalFunctoriality.2}
  \item The image $\varphi(\coscrA(X))\subset\TT^n$ of the coamoeba of $X$ is dense in $\coscrA(Y)$ and
     we have 
    $\coscrA(Y) \subset \varphi\left(\overline{\coscrA(X)}\right)=  \overline{\varphi(\coscrA(X))}$. \label{easyTropicalFunctoriality.3}
  \item $\pls(Y)\subset\pls(X)$. \label{easyTropicalFunctoriality.4}
 \end{enumerate}
\end{proposition}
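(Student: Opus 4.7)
The plan is to leverage three facts: that $\varphi$ commutes with $\Log$ and $\Arg$ (as $\varphi$ comes from a group homomorphism), that $\varphi(X)$ is dense in $Y$, and that $\varphi\colon\TT^m\to\TT^n$ is continuous between compact Hausdorff spaces and hence closed. Most of the proposition reduces to elementary manipulations with limits, density, and closedness; the one genuinely subtle point is the lifting step in~(4).

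For part~(1), commutativity yields $\scrA(\varphi(X))=\varphi(\scrA(X))$, and the inclusion $\varphi(X)\subset Y$ gives $\varphi(\scrA(X))\subset\scrA(Y)$, so closedness of amoebas produces $\overline{\varphi(\scrA(X))}\subset\scrA(Y)$. For the reverse, write $y\in Y$ as $y=\lim\varphi(x_i)$ with $x_i\in X$ by density, and note $\Log(y)=\lim\varphi(\Log(x_i))\in\overline{\varphi(\scrA(X))}$. Part~(3) is parallel on the $\Arg$ side; the additional identity $\varphi(\overline{\coscrA(X)})=\overline{\varphi(\coscrA(X))}$ uses that $\varphi$ is a closed map on the compact torus $\TT^m$. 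For~(2), $\trop$ is the closed cone on the logarithmic limit set, so linearity of $\varphi\colon\RR^m\to\RR^n$ carries asymptotic rays of $\scrA(X)$ to those of $\scrA(Y)$ and conversely; both inclusions follow from~(1) by passing to subsequences of normalized log vectors and rescaling.

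For part~(4), which I read as $\pls(Y)\subset\varphi(\pls(X))$ under the paper's convention of sharing notation between $\varphi$ and its induced maps, take $p\in\pls(Y)$ and realize $p=\lim\Arg(y_i)$ for an unbounded sequence $\{y_i\}\subset Y$. Using density of $\varphi(X)$ in $Y$, I would pick $x_i\in X$ so that $\varphi(x_i)$ approximates $y_i$ closely enough in $(\CC^\times)^n$ that $\Log(\varphi(x_i))$ remains unbounded and $\Arg(\varphi(x_i))\to p$. Since $\varphi$ is continuous on the open torus, unboundedness of $\{\varphi(x_i)\}$ in $(\CC^\times)^n$ forces $\{x_i\}$ to be unbounded in $(\CC^\times)^m$. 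Extracting a subsequence along which $\Arg(x_i)\to q\in\pls(X)$ and applying continuity of $\varphi$ on $\TT^m$ then yields $\varphi(q)=p$.

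The main obstacle is this lifting step: choosing the $x_i$ so that both the asymptotic argument $p$ and the unboundedness of $\{y_i\}$ transfer to $\{\varphi(x_i)\}$. A diagonal argument suffices: take $x_i\in X\cap\varphi^{-1}(U_i)$, where $U_i\subset(\CC^\times)^n$ is a neighborhood of $y_i$ shrunk so that $\Arg(U_i)$ clusters at $p$ and $\Log(U_i)$ lies outside a ball of radius $i$. Nonemptiness of $X\cap\varphi^{-1}(U_i)$ is immediate from density of $\varphi(X)$ in $Y$, since every neighborhood of a point of $Y$ meets $\varphi(X)$. Parts~(1)--(3) are otherwise essentially formal consequences of continuity and density.
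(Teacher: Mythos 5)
Your proposal is correct and follows essentially the same route as the paper's proof: parts (1)–(3) from continuity of $\Log$, $\Arg$, $\varphi$, closedness of amoebas, and properness (equivalently, closedness) of $\varphi$ on the compact torus, with (2) read off from (1) since $\trop$ is the closed cone over the logarithmic limit set; and part (4) by lifting an unbounded sequence $\{y_i\}\subset Y$ with $\Arg(y_i)\to\theta$ to approximating points $x_i\in X$ via density, observing that continuity of $\varphi$ forces $\{x_i\}$ to be unbounded, and extracting a convergent subsequence of $\{\Arg(x_i)\}$. You correctly interpret the statement of (4) as $\pls(Y)\subset\varphi(\pls(X))$ under the paper's convention of reusing $\varphi$ for the induced map on $\TT^m$, and your diagonal choice of $x_i$ inside $\varphi^{-1}(U_i)$ is just a slightly more explicit version of the paper's choice of $x_i$ with distances to $y_i$ at most $2^{-i}$.
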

\begin{proof}
  Claim (1) follows from the continuity of $\Log$ and $\varphi$ as amoebas are closed.
  As a tropical variety is the cone over the logarithmic limit set, $(1)$ implies $(2)$.
  Claim (3) is a consequence of the continuity of $\Arg$ and $\varphi$, and that $\varphi\colon\TT^m\to\TT^n$ is proper.

  For (4), let $\theta\in\pls(Y)$.
  There exists an unbounded sequence $\{ y_i\mid i\in\NN\}\subset Y$ such that  $\lim_{i\to\infty} \Arg(y_i)=\theta$.
  Since $\overline{\varphi(X)}=Y$, for all $i\in\NN$ there exists a point $x_i\in X$ such that both distances
  between $\varphi(x_i)$ and $y_i$ and  between $\Arg(\varphi(x_i))$ and $\Arg(y_i)$ are at most $2^{-n}$.
  Then the sequence $\{x_i\mid i\in\NN\}\subset X$ is unbounded and $\lim_{i\to\infty} \Arg(\varphi(x_i))=\theta$.

  Let $\phi\in\TT^m$ be an accumulation point of $\{\Arg(x_i)\mid i\in\NN\}$ and replace $\{x_i\mid i\in\NN\}$ by a subsequence such that
  $\lim_{i\to\infty}\Arg(x_i)=\phi$.
  Then $\phi\in\pls(X)$ and $\varphi(\phi)=\theta$.  
\end{proof}

We prove a lemma about images of components of phase limit sets under $\varphi$.

\begin{lemma}\label{L:ConeLemma}
 Under the hypotheses of Proposition~\ref{P:easyTropicalFunctoriality}, suppose that we have fan structures on $\trop(X)$
 and $\trop(Y)$ such that $\varphi\colon \trop(X)\to\trop(Y)$ is a map of fans.
 For any cone $\tau$ of $\trop(Y)$, the stratum $\pls_\tau(Y)$ of $\pls(Y)$ is the union of images $\varphi(\pls_\sigma(X))$ of strata
 $\pls_\sigma(X)$ of  $\pls(X)$ such that $\varphi(\sigma)$ meets the relative interior of $\tau$.
\end{lemma}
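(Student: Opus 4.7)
I would prove the stated equality by establishing both inclusions separately.

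For the inclusion $\varphi(\pls_\sigma(X))\subseteq\pls_\tau(Y)$ whenever $\varphi(\sigma)$ meets $\tau^\circ$, I would pick a cocharacter $\alpha'\in\sigma^\circ$ with $\varphi(\alpha')\in\tau^\circ$, so that $\ini_\sigma X=\ini_{\alpha'}X$ and $\ini_\tau Y=\ini_{\varphi(\alpha')}Y$.  The map $(t,x)\mapsto(t,\varphi(x))$ sends $\calX(\alpha')$ into $\calY(\varphi(\alpha'))$ over $\CC^\times$; extending to the closures in $\CC\times(\CC^\times)^m$ and $\CC\times(\CC^\times)^n$ and restricting to the fiber over $t=0$ yields a morphism $\varphi\colon\ini_\sigma X\to\ini_\tau Y$.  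Consequently $\varphi(\coscrA(\ini_\sigma X))\subseteq\coscrA(\ini_\tau Y)$; taking closures and using that $\varphi\colon\TT^m\to\TT^n$ is continuous on the compact torus (so that closure and image commute) gives $\varphi(\pls_\sigma(X))\subseteq\pls_\tau(Y)$.

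For the reverse inclusion I would pass to the tropical compactifications $\overline{X}\subseteq Z_X$ and $\overline{Y}\subseteq Z_Y$.  The map of fans $\varphi\colon\trop(X)\to\trop(Y)$ induces a toric morphism $Z_X\to Z_Y$ that extends $\varphi$ to a proper morphism $\bar\varphi\colon\overline{X}\to\overline{Y}$; this is surjective because $\varphi\colon X\to Y$ is dominant and $\overline{Y}$ is complete.  For any cone $\tau$ of $\trop(Y)$, the preimage under $\bar\varphi$ of $\overline{Y}\cap\calO_\tau^Y=\ini_\tau Y/\CC^\times_\tau$ is the union of the orbit intersections $\overline{X}\cap\calO_\sigma^X$ over cones $\sigma$ of $\trop(X)$ with $\varphi(\sigma^\circ)\subseteq\tau^\circ$, and surjectivity of $\bar\varphi$ gives
\[
 \overline{Y}\cap\calO_\tau^Y\;=\;\bigcup_\sigma \bar\varphi\bigl(\overline{X}\cap\calO_\sigma^X\bigr)\,.
\]
Lifting back through the quotients and applying $\Arg$ expresses each point of $\coscrA(\ini_\tau Y)$ as $\theta+\varphi(\phi)$ with $\theta\in\TT_\tau$ and $\phi\in\coscrA(\ini_\sigma X)$ for some such $\sigma$.

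The main obstacle is that $\pls_\tau(Y)$ is $\TT_\tau$-invariant while each $\varphi(\pls_\sigma(X))$ is only $\TT_{\varphi(\sigma)}$-invariant, so a single $\sigma$ with $\varphi(\sigma)\subsetneq\tau$ does not directly capture the $\TT_\tau$-translations appearing above.  To close this gap I would refine $\trop(X)$—allowed by Lemma~\ref{L:refiningFan} since $\pls(X)$ is unchanged—to include a cone $\sigma^*$ with $\varphi(\sigma^*)=\tau$, generated by lifts of the rays of $\tau$ (which exist because $\tau\subseteq\trop(Y)=\varphi(\trop(X))$).  For such a $\sigma^*$, both $\varphi\colon\CC^\times_{\sigma^*}\to\CC^\times_\tau$ and (by a dimension count on generic fibers, together with properness) the map $\overline{X}\cap\calO_{\sigma^*}^X\to\overline{Y}\cap\calO_\tau^Y$ are surjective, so $\varphi\colon\ini_{\sigma^*}X\to\ini_\tau Y$ is surjective and $\varphi(\pls_{\sigma^*}(X))=\pls_\tau(Y)$.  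The cone of the original fan containing $\sigma^{*\circ}$ in its relative interior—whose image under $\varphi$ meets $\tau^\circ$—then supplies, via Lemma~\ref{L:refiningFan}, the desired stratum in the union.
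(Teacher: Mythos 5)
Your overall strategy mirrors the paper's: both pass to the tropical compactifications $\overline{X}\subset Z_X$ and $\overline{Y}\subset Z_Y$, both use that $\bar\varphi\colon\overline{X}\to\overline{Y}$ is surjective, and both obtain $\overline{Y}\cap\calO_\tau = \bigcup_\sigma\bar\varphi(\overline{X}\cap\calO_\sigma)$. Your explicit first-inclusion argument via the families $\calX(\alpha)$ is a clean complement to the paper's proof, which establishes $\varphi(\ini_\sigma X)\subset\ini_\tau Y$ along the way as a consequence of $\varphi(\calO_\sigma)\subset\calO_\tau$ and $\varphi(\CC^\times_\sigma)\subset\CC^\times_\tau$.

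You are also right to flag the subtlety in passing from the orbit-level equality back to the initial schemes: taking preimages under $\pi_\tau\colon(\CC^\times)^n\to\calO_\tau$ gives $\ini_\tau Y = \bigcup_\sigma\CC^\times_\tau\cdot\varphi(\ini_\sigma X)$, and since $\varphi(\ini_\sigma X)$ is a priori only $\varphi(\CC^\times_\sigma)$-invariant, the $\CC^\times_\tau$-factor is not obviously absorbable. The paper elides this (``and therefore $\ini_\tau Y$ is the union of the images $\varphi(\ini_\sigma X)$''), so you have identified a genuine gap in the exposition — one that is harmless when $\tau$ is a ray (the only case actually used, in Lemma~\ref{L:RayLemma}), since then any $\sigma$ with $\varphi(\sigma)$ meeting $\tau^\circ$ has $\varphi(\sigma)$ spanning $\RR\tau$, whence $\varphi(\CC^\times_\sigma)=\CC^\times_\tau$.

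However, your proposed fix does not close the gap. First, the cone ``generated by lifts of the rays of $\tau$'' need not lie inside $\trop(X)$: tropical varieties are not convex, and lifts of distinct rays of $\tau$ will generally sit in different cones of $\trop(X)$, so the cone they span is not a cone of any refinement of $\trop(X)$, and Lemma~\ref{L:refiningFan} does not apply. What \emph{is} guaranteed (by finiteness of the fan and surjectivity of $\varphi\colon\trop(X)\to\trop(Y)$) is that \emph{some} cone $\sigma$ of $\trop(X)$ with $\varphi(\sigma^\circ)\subset\tau^\circ$ has $\varphi(\sigma)$ spanning $\RR\tau$, hence $\varphi(\CC^\times_\sigma)=\CC^\times_\tau$; but that is weaker than $\varphi(\sigma)=\tau$. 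Second, even granting a $\sigma^*$ with $\varphi(\CC^\times_{\sigma^*})=\CC^\times_\tau$, your claim that $\overline{X}\cap\calO_{\sigma^*}\to\overline{Y}\cap\calO_\tau$ is surjective ``by a dimension count on generic fibers, together with properness'' is not justified: $\overline{Y}\cap\calO_\tau$ may have several irreducible components, each lying over a different $\calO_\sigma$, and properness of $\bar\varphi$ only gives surjectivity of the \emph{union} over all relevant $\sigma$ onto $\overline{Y}\cap\calO_\tau$, not of any single $\overline{X}\cap\calO_{\sigma^*}$. So the final step $\varphi(\pls_{\sigma^*}(X))=\pls_\tau(Y)$ does not follow.
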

\begin{proof}
  We use tropical compactification.
  Let \defcolor{$Z_X$} be the toric variety associated to the fan $\trop(X)$ and \defcolor{$Z_Y$} the toric variety
  associated to $\trop(Y)$.
  As $\varphi\colon \trop(X)\to\trop(Y)$ is  a map of fans, it induces a map $\varphi\colon Z_X\to Z_Y$ that is
  equivariant with respect to $\varphi\colon(\CC^\times)^m\to(\CC^\times)^n$.
  
  As $\varphi$ is a map of fans, whenever $\sigma$ is a cone of $\trop(X)$ and $\tau$ is a cone of $\trop(Y)$ such that
  $\varphi(\sigma)\subset\tau$, if $V_\sigma\subset Z_X$ and $V_\tau\subset Z_Y$ are the corresponding open sets of the tropical
  compactifications, then $\varphi(V_\sigma)\subset V_\tau$.
  When  $\varphi(\sigma)$ meets the relative interior of $\tau$ this is
  refined to $\varphi(\calO_\sigma)\subsetneq \calO_\tau$ and we have that 
  $\varphi(\CC^\times_\sigma)\subset\CC^\times_\tau$.

  In particular, if $\mu$ is the lineality space of $\trop(X)$ and $\nu$ the lineality space of $\trop(Y)$, then
  $\varphi(\mu)\subset\nu$ and $\varphi(\mu)$ meets the relative interior of $\nu$ (which is $\nu$).
  Thus the map $\varphi\colon(\CC^\times)^m\to(\CC^\times)^n$ of tori induces a map
  $\varphi\colon(\CC^\times)^m/\CC^\times_\mu\to(\CC^\times)^n/\CC^\times_\nu$, which is the map $\varphi\colon\calO_\mu\to\calO_\nu$ of
  dense orbits of $Z_X$ and $Z_Y$.
  Consequently, we have an induced map $\varphi\colon X/\CC^\times_\mu\to Y/\CC^\times_\nu$, which  is dominant.
  The tropical compactifications $\overline{X}$ and $\overline{Y}$ are the closures of these quotients in $Z_X$ and $Z_Y$, respectively.
  Thus $\varphi\colon\overline{X}\to\overline{Y}$ is a surjection.

  Let $\tau$ be a cone of $\trop(Y)$.
  By   Lemma~\ref{P:easyTropicalFunctoriality}\eqref{easyTropicalFunctoriality.2}, $\varphi\colon\trop(X)\to\trop(Y)$ is a
  surjection,
  so there is a cone $\sigma$ of $\trop(X)$ such that $\varphi(\sigma)\subset\tau$ and
  $\varphi(\sigma)$ meets the relative interior of $\tau$.
  Thus $\varphi(\calO_\sigma)\subset\calO_\tau$.
  Consequently, $\varphi(\overline{X}\cap\calO_\sigma)\subset \overline{Y}\cap\calO_\tau$.
  We have that  $\ini_\sigma X/\CC^\times_\sigma=\overline{X}\cap\calO_\sigma$ and 
  $\ini_\tau Y/\CC^\times_\tau=\overline{Y}\cap\calO_\tau$, so that
  $\varphi(\ini_\sigma X /\CC^\times_\sigma)\subset \ini_\tau Y/\CC^\times_\tau$.
  Since $\varphi(\CC^\times_\sigma)\subset\CC^\times_\tau$, we conclude that $\varphi(\ini_\sigma X)\subset\ini_\tau Y$.
  Since $\varphi\colon\overline{X}\to\overline{Y}$ is surjective, $\overline{Y}\cap\calO_\tau$ is the union of the images
  $\varphi(\overline{X}\cap\calO_\sigma)$ for such cones $\sigma$ of $\trop(X)$, and therefore
  $\ini_\tau Y$ is the union of the images $\varphi(\ini_\sigma X)$ for such cones $\sigma$.
  Applying $\Arg$ and taking closure completes the proof.
  %
  %
\end{proof}

We finish with a technical lemma that is needed for our description of the phase limit set of a discriminant.

\begin{lemma}\label{L:RayLemma}
 Under the hypotheses of Lemma~\ref{P:easyTropicalFunctoriality}, suppose that $\rho$ is a ray of $\trop(Y)$ that is
 not the image of any ray of $\trop(X)$.
 Let $\varphi^*(\rho)$ be the set of cones $\sigma$ of $\trop(X)$ such that $\rho$ meets the relative interior of $\sigma$.
 Then
 \begin{equation}\label{Eq:rayUnion}
   \pls_\rho(Y)\ \subset\ \bigcup_{\sigma\in\varphi^*(\rho)} \varphi( \pls_\sigma(X))\,.
 \end{equation}
\end{lemma}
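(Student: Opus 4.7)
The plan is to refine the fans so that $\varphi$ becomes a map of fans (allowing Lemma~\ref{L:ConeLemma} to apply), and then translate the resulting cones back to the original fan on $\trop(X)$ using Lemma~\ref{L:refiningFan}. The hypothesis that $\rho$ is not the image of any ray of $\trop(X)$ is used only at the end, to rule out rays among the cones $\sigma$ that arise.

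First, choose fan structures $\Sigma_X$ refining $\trop(X)$ and $\Sigma_Y$ refining $\trop(Y)$ so that every cone of $\Sigma_X$ is mapped by $\varphi$ into a cone of $\Sigma_Y$; this is always possible via a common refinement. Since $\rho$ is one-dimensional, no refinement subdivides it, so $\rho$ remains a ray of $\Sigma_Y$. Lemma~\ref{L:ConeLemma} then yields
\[
   \pls_\rho(Y)\ =\ \bigcup_\tau \varphi(\pls_\tau(X))\,,
\]
the union being over cones $\tau\in\Sigma_X$ whose image $\varphi(\tau)$ meets the relative interior of $\rho$.

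For each such $\tau$, note that $\tau$ is not the lineality space of $\Sigma_X$: the lineality of $\Sigma_X$ maps into the lineality space of $\trop(Y)$, which is disjoint from $\mathrm{relint}(\rho)$. Hence Lemma~\ref{L:refiningFan} provides a cone $\sigma$ of the original fan $\trop(X)$ with $\tau\subset\sigma$, with $\tau$ meeting the relative interior of $\sigma$, and with $\pls_\tau(X)=\pls_\sigma(X)$. Since $\varphi(\tau)\subset\varphi(\sigma)$, the cone $\varphi(\sigma)$ also meets $\mathrm{relint}(\rho)$, so $\sigma\in\varphi^*(\rho)$.

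Finally, the hypothesis that $\rho$ is not the image of any ray of $\trop(X)$ guarantees that the cones $\sigma$ produced are never rays: if $\sigma$ were a ray, then $\tau=\sigma$ (a ray admits no nontrivial subdivision), which would force the ray $\varphi(\sigma)$ to equal the ray $\rho$, contradicting the hypothesis. Consequently, each term $\varphi(\pls_\tau(X))=\varphi(\pls_\sigma(X))$ contributes to the right-hand side of~\eqref{Eq:rayUnion}, yielding the desired inclusion. The main subtlety is arranging the compatible refinement while keeping $\rho$ intact as a cone of $\Sigma_Y$, but this is routine since rays persist through fan refinements.
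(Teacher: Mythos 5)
Your argument takes essentially the same route as the paper's proof: pass to refinements of $\trop(X)$ and $\trop(Y)$ so that $\varphi$ becomes a map of fans, apply Lemma~\ref{L:ConeLemma} to get $\pls_\rho(Y)$ as a union over cones $\tau$ of the refinement, and then use Lemma~\ref{L:refiningFan} to replace each $\tau$ by the unique cone $\sigma$ of the original $\trop(X)$ containing it (with $\tau$ meeting $\mathrm{relint}(\sigma)$ and $\pls_\tau(X)=\pls_\sigma(X)$), verifying $\sigma\in\varphi^*(\rho)$. The paper compresses this two-way translation between refined and original cones into two sentences and then invokes Lemma~\ref{L:ConeLemma}; your version makes the correspondence explicit, including the small but necessary check that $\tau$ is not the lineality space so that Lemma~\ref{L:refiningFan} applies. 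That part is correct and complete.

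The one flaw is your final paragraph. You present the hypothesis that $\rho$ is not the image of any ray of $\trop(X)$ as what "yields the desired inclusion," but it plays no role in establishing the containment. As soon as you have shown that each cone $\tau$ in the Lemma~\ref{L:ConeLemma} decomposition gives rise to some $\sigma\in\varphi^*(\rho)$ with $\varphi(\pls_\tau(X))=\varphi(\pls_\sigma(X))$, the inclusion~\eqref{Eq:rayUnion} is already proved; a ray $\sigma$ would still belong to $\varphi^*(\rho)$ (since $\varphi(\sigma)$ is a cone meeting $\mathrm{relint}(\rho)$, hence contains $\rho$) and its term would still sit on the right-hand side. The paper's own proof does not use that hypothesis either. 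Its presence in the statement signals when the lemma is useful rather than being a logical necessity --- it is applied precisely to rays not coming from $\Berg(B)$ in Lemma~\ref{L:type_2}. So your "consequently" is a non sequitur: cut the final paragraph, or replace it with a remark that the hypothesis is not used in the argument and merely describes the intended scope.
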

\begin{proof}
  Let $\Sigma_X$ and $\Sigma_Y$ be the given fan structures on $\trop(X)$ and $\trop(Y)$.
  While $\varphi$ may not be a map of fans (the image of a cone in $\Sigma_X$ need not be contained in a single cone of $\trop(Y)$),
  there are refinements of the given fans so that $\varphi$ becomes a map of fans.
  Let us equip $\trop(X)$ and $\trop(Y)$  with such refinements.

  The ray $\rho$ remains a ray of $\trop(Y)$.
  If a cone $\tau$ of $\trop(X)$ has the property that $\varphi(\tau)$ meets the relative interior of $\rho$, then there is a
  cone $\sigma$ of $\varphi^*(\rho)$ such that $\tau$ meets   the relative interior of $\sigma$.
  Also, if $\sigma\in\varphi^*(\rho)$, then there is a cone $\tau$ of $\trop(X)$ that meets the relative interior of $\sigma$
  and has the property that $\varphi(\tau)$ meets the relative interior of $\rho$.
  Since, for any such pair of cones $\tau$ and $\sigma$,  $\pls_\tau(X)=\pls_\sigma(X)$, it is no loss to
  prove~\eqref{Eq:rayUnion} for the refinements $\trop(X)$ and $\trop(Y)$.
  But this follows by Lemma~\ref{L:ConeLemma}.  
\end{proof}

\section{The phase limit set of a linear space}\label{S:LinearSpace}

A \demph{multiset} is a set with some elements possibly repeated, so that $\{1,2,1\}=\{1,1,2\}$ and $\{1,2\}$ are distinct
multisubsets of $\ZZ$.
Let $V$ be a complex vector space with dual space $V^*$ and let $B\subset V^*$ be a finite
spanning multiset of nonzero vectors, which are linear forms on $V$.
We use $B$ and its subsets as indexing sets, so that $(\CC^\times)^B$ is the torus
$(\CC^\times)^{|B|}$ whose coordinates are indexed by the elements of $B$ and $\TT^B$ is its compact subtorus.
We extend this indexing to subsets of $B$.
Consider the injective map $\lambda_B\colon V\to\CC^B$ where
$\lambda_B(v)=( \langle b,v\rangle\mid b\in B)$, whose image is a linear subspace.
The tropicalization of its image (actually of $\lambda_B(V)\cap(\CC^\times)^B$) is the \demph{Bergman fan} of the
matroid associated to $B$~\cite{AK,FS}.
We first describe the phase limit set of this image and then discuss its relation to the Bergman fan.

\subsection{Coamoebas of hyperplane complements}\label{S:hyperplane}
Let \defcolor{$\calH_B$} or \defcolor{$\calH$} be the central arrangement of hyperplanes defined by the vectors in $B$.
We write \defcolor{$\calH^c_B$} or \defcolor{$\calH^c$} for the image of the hyperplane complement $V{\smallsetminus}\calH_B$ in 
$(\CC^\times)^B$  under the map $\lambda_B$. 

The set $\defcolor{F}\subset B$ of forms that vanish at a given point $x\in V$ is a \demph{flat} of $B$.
These are the coordinates of $\lambda_B(x)$ in $\CC^B$ which vanish.
Elements of the flat $F$  vanish along the intersection \defcolor{$L$} of the hyperplanes in
$\calH$ defined by forms in $F$.
We also call $L$ a \demph{flat} of $\calH$ (corresponding to $F$).
The forms in $F$ define a hyperplane arrangement \defcolor{$\calH/L$} in $V/L$, called the \demph{contraction} of $\calH$
by $L$. 
Restricting the forms $\defcolor{F^c}\vcentcolon=B\smallsetminus F$ not in $F$ (the hyperplanes not containing $L$)
to $L$ gives a hyperplane arrangement \defcolor{$\calH|_L$} in $L$, called the \demph{restriction} of $\calH$ to $L$.

For a flat $L$ of $\calH$ and the set $F\subset B$ of forms that vanish along $L$, we have injective maps
$\lambda_F\colon V/L\to\CC^F$ and $\lambda_{F^c}\colon L\to\CC^{F^c}$.
As with $\calH^c$, we consider the image \defcolor{$(\calH/L)^c$} of  $V/L\smallsetminus\calH/L$  in $(\CC^\times)^F$ under $\lambda_F$
and the image  \defcolor{$(\calH|_L)^c$} of $L\smallsetminus \calH|_L$ in $(\CC^\times)^{F^c}$ under $\lambda_{F^c}$.

\begin{example}\label{Ex:RunningI}
 Suppose that $V=\CC^3$ and let $B=\{b_1,\dotsc,b_6\}$ consist of the six row vectors of the matrix in Figure~\ref{F:RunningExampleI}.
 \begin{figure}[htb]
  \centering
${\displaystyle   \left( \begin{array}{ccc}
     1 & 0 & 0 \\
     0 & 1 & 0 \\
     0 & 0 & 1 \\
     1 & 2 & 0 \\
    -2 &-1 &-2 \\
     0 &-2 & 1 \end{array}\right)
}$  \qquad\qquad
   \raisebox{-46pt}{\begin{picture}(170,100)(-13,0)
     \put(0,0){\includegraphics{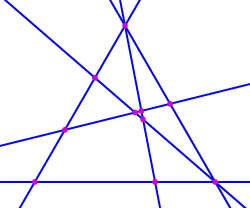}}
     \put(66,87){\small$p_{236}$}
     \put(105,17){\small$p_{124}$}
     \put(61,20){\small$\ell_6$}
     \put(3,37){\small$\ell_5$}
     \put(9,79){\small$\ell_4$}
     \put(42,76){\small$\ell_3$}
     \put(95,33){\small$\ell_2$}
     \put(36,2){\small$\ell_1$}
   \end{picture}}
 \caption{A vector configuration and corresponding line arrangement.}
 \label{F:RunningExampleI}
\end{figure}
 These define a central hyperplane arrangement in $\CC^3$ whose corresponding line arrangement is shown in an affine patch
 of $\PP^2$ in Figure~\ref{F:RunningExampleI}.
 The line $\ell_i$ is defined by the vector $b_i$ of the $i$th row of the matrix.
 The remaining flats of $\calH$ are their intersections, which are points in this picture.
 Nine of the points are incident to two lines, while two ($p_{124},p_{236}$) are incident to three lines.
 The point $\ell_1\cap\ell_5$ is not shown.

 Consider the two flats, $\ell_1$ and $p_{124}$.
 Viewed projectively, both $\calH/\ell_1$ and $\calH|_{p_{124}}$ are points but
 $\calH|_{\ell_1}$ and $\calH/p_{124}$ are hyperplane (point) arrangements on a $\PP^1$:
 \[
  \calH|_{\ell_1}\ =\
   \begin{picture}(121,17)
   \put(0,0){\includegraphics{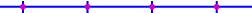}}
   \put(8,9){\small$\ell_5$}   \put(39,9){\small$\ell_3$}   \put(70,9){\small$\ell_6$}   \put(91,9){\small$\ell_2{=}\ell_4$}
 \end{picture}
 \qquad\qquad
 \calH/p_{124}\ =\ 
 \begin{picture}(90,17)
   \put(0,0){\includegraphics{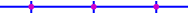}}
   \put(12,9){\small$\ell_1$}   \put(42,9){\small$\ell_4$}   \put(72,9){\small$\ell_2$}  
 \end{picture}
 \eqno{\diamond}
 \]
\end{example}\vspace{5pt}

Let us now study the phase limit set $\pls(\calH^c)$  of the hyperplane complement $\calH^c$.
Let $\theta\in\pls(\calH^c)$.
Then there is an unbounded sequence $\{z_i\mid i\in\NN\}\subset \calH^c$ such that 
$\lim_{i\to\infty} \Arg(z_i)=\theta$.
As $\lambda_B$ is an injection, there is a unique unbounded sequence 
$\{x_i\mid i\in\NN\}\subset V\smallsetminus\calH$ such that $\lambda_B(x_i)=z_i$ for every $i$.
Since $\lambda_B$ is linear, $\lambda_B(x_i/\|x_i\|)=z_i/\|x_i\|$ and $\Arg(z_i)=\Arg(z_i/\|x_i\|)$.
%
%
(For \defcolor{$\|\cdot\|$}, choose an inner product on $V$.)
Thus we may assume that the sequence $\{x_i\mid i\in \NN\}$ lies in the unit sphere of $V$.
As the sphere is compact, we may further assume that $\{x_i\mid i\in \NN\}$ converges to some 
point \defcolor{$x$} in the sphere, and thus in $V{\smallsetminus}\{0\}$.
Note that $x$ may lie in $\calH$.
We say that $\theta\in\pls(\calH^c)$ \demph{corresponds} to the point $x$.

We describe the set of points $\theta$ that correspond to $x\in V{\smallsetminus}\{0\}$.
To that end, let  $\{x_i\mid i\in\NN\}$ be a sequence in $V{\smallsetminus}\calH$ that converges to $x$ such that 
$\{\Arg(\lambda_B(x_i))\mid i\in\NN\}\subset\coscrA(\calH^c)$ converges to a point $\theta\in\TT^B$.
For each $i$, set $\defcolor{y_i}\vcentcolon=x_i-x$ and replace each $x_i$ by $x+y_i$.
Then $y_i\to 0$ and $x+y_i\in V{\smallsetminus}\calH$ for every $i$.
Let $\defcolor{F}\subset B$ be the set of linear forms in $B$ that vanish at $x$, which is a flat of $B$, and let
$F^c\vcentcolon=B\smallsetminus F$ be those linear forms that do not vanish at $x$.
Writing $\CC^B=\CC^F\times\CC^{F^c}$, $\TT^B=\TT^F\times\TT^{F^c}$, and  
$\lambda_B=\lambda_F\times\lambda_{F^c}$, we have 
\[
  \Arg(\lambda_B(x+y_i))\ =\
  \Arg(\lambda_F(y_i))\ \times\ \Arg(\lambda_{F^c}(x+y_i))\ \in\ \TT^F\times\TT^{F^c}\,.
\]
(As $\TT^F\times\TT^{F^c}=\TT^B$, we interpret this and similar assertions to mean that each factor on the left of the `$\in$' lies in the
corresponding factor on the right.)
Since $y_i\to 0$, but $x\neq 0$ is fixed, in the limit the second factor becomes $\Arg(\lambda_{F^c}(x))$.
For the limit of the first factor, observe that multiplying $y_i$ by any positive scalar
does not change $\Arg(\lambda_F(y_i))$, and also that $\lambda_F(y_i)$ depends not on $y_i$, but on $y_i$ modulo the
linear subspace $L$ of $V$ where the forms in $F$ vanish.
This linear space is the minimal flat of $\calH$ containing $x$, so that $x\in L\smallsetminus \calH|_L$ and
$\Arg(\lambda_F(y_i))\in\Arg((\calH/L)^c)$. 

Taking limits and considering all sequences $\{x_i\mid i\in\NN\}\subset V{\smallsetminus}\calH$ that converge to $x$ gives the following.

\begin{lemma}\label{Th:PLS_Linear}
  The set of points of $\pls(\calH^c_B)$ that correspond to a point $x\in V$ is the product
\[
   \overline{\coscrA((\calH/L)^c)}\ \times\  \{\Arg(\lambda_{F^c}(x))\}\ \subset\ \TT^F\times\TT^{F^c}\,,
\]
  where $L$ is the minimal flat of $\calH$ containing $x$ and $F^c$ are the forms that do not vanish at $x$.   
  For a flat $L$ of $\calH$, the union of these sets for points $x\in L{\smallsetminus}(\calH|_L)^c$ is
 \begin{equation}\label{Eq:PLS_hyperplane}
   \overline{\coscrA((\calH/L)^c)}\ \times\   \coscrA((\calH|_L)^c)
    \ \subset\ \TT^F\times\TT^{F^c}\,.
 \end{equation}
\end{lemma}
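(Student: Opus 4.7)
The plan is to establish the two set-theoretic inclusions describing the points of $\pls(\calH^c_B)$ corresponding to a fixed $x \in V$, and then obtain \eqref{Eq:PLS_hyperplane} by a direct union. The forward inclusion is essentially contained in the discussion preceding the lemma; the reverse inclusion requires producing an explicit approximating sequence.

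For the forward inclusion, I fix $\theta \in \pls(\calH^c)$ corresponding to $x$, choose an unbounded sequence $\{x_i\} \subset V \setminus \calH$ rescaled to the unit sphere with $x_i \to x$, and set $y_i := x_i - x \to 0$. As in the text preceding the lemma,
\[
\Arg(\lambda_B(x+y_i)) \;=\; \Arg(\lambda_F(y_i)) \times \Arg(\lambda_{F^c}(x+y_i)) \;\in\; \TT^F\times\TT^{F^c},
\]
and since the forms in $F^c$ do not vanish at $x$, the second factor converges to $\Arg(\lambda_{F^c}(x))$. For the first factor, note that $y_i \notin L$ (else $x+y_i \in L \subset \calH$, contradicting $x_i \notin \calH$) and moreover the class of $y_i$ in $V/L$ does not meet any hyperplane in $\calH/L$ (else some $b \in F$ would vanish on $y_i$, forcing $\langle b, x_i\rangle = \langle b, y_i\rangle = 0$). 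Thus $\Arg(\lambda_F(y_i)) \in \coscrA((\calH/L)^c)$ for every $i$, and its limit $\phi$ lies in $\overline{\coscrA((\calH/L)^c)}$.

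For the reverse inclusion, fix $\phi \in \overline{\coscrA((\calH/L)^c)}$ and build an unbounded sequence witnessing $(\phi,\Arg(\lambda_{F^c}(x))) \in \pls(\calH^c)$. Choose $\phi_k \in \coscrA((\calH/L)^c)$ with $\phi_k \to \phi$, and for each $k$ pick $\bar y_k \in V/L \setminus \calH/L$ with $\Arg(\lambda_F(\bar y_k)) = \phi_k$. Using the invariance of $\Arg \circ \lambda_F$ under positive scaling, I rescale so that $\|\bar y_k\| < 1/k$, then lift each $\bar y_k$ to a representative $y_k \in V$ and set $x_k := x + y_k$. For large $k$, every $b \in F^c$ satisfies $\langle b, x_k\rangle \neq 0$ by continuity, and every $b \in F$ gives $\langle b, x_k\rangle = \langle b, y_k\rangle \neq 0$ because $\lambda_F(\bar y_k) \in (\CC^\times)^F$, so $x_k \in V \setminus \calH$. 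The sequence $\{\lambda_B(x_k)\}$ is unbounded in $(\CC^\times)^B$ since its $F$-coordinates tend to $0$, and
\[
\Arg(\lambda_B(x_k)) \;=\; \phi_k \times \Arg(\lambda_{F^c}(x+y_k)) \;\longrightarrow\; (\phi,\Arg(\lambda_{F^c}(x))),
\]
giving the required point of $\pls(\calH^c)$ corresponding to $x$.

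The second assertion \eqref{Eq:PLS_hyperplane} is then a direct union: as $x$ ranges over $L \setminus \calH|_L$, the first factor $\overline{\coscrA((\calH/L)^c)}$ depends only on $L$ (equivalently on $F$) and is fixed, while $\{\Arg(\lambda_{F^c}(x))\}$ sweeps out $\coscrA((\calH|_L)^c) \subset \TT^{F^c}$ by the very definition of that coamoeba. The main step requiring attention is the verification that the constructed $x_k$ lies in $V \setminus \calH$ while the corresponding sequence in $(\CC^\times)^B$ is unbounded with the right asymptotic arguments; all three properties reduce cleanly to the factorization of $\lambda_B$ through $\lambda_F$ (on $V/L$) and $\lambda_{F^c}$, so no serious obstacle is expected.
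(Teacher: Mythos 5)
Your argument is correct and takes essentially the same approach as the paper: the forward inclusion follows the factorization $\lambda_B = \lambda_F \times \lambda_{F^c}$ and the splitting $\Arg(\lambda_B(x+y_i)) = \Arg(\lambda_F(y_i)) \times \Arg(\lambda_{F^c}(x+y_i))$ exactly as in the discussion preceding the lemma. Where you go beyond the paper is in spelling out the reverse inclusion by explicitly constructing an unbounded sequence $x_k = x + y_k$ realizing any given $\phi \in \overline{\coscrA((\calH/L)^c)}$; the paper disposes of this direction with the phrase ``Taking limits and considering all sequences \dots gives the following,'' which glosses over the need to verify that the $x_k$ avoid $\calH$ and that $\lambda_B(x_k)$ is unbounded in $(\CC^\times)^B$ (via the $F$-coordinates tending to $0$). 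Your verification of both of these points is correct (for the nontrivial case $F \neq \emptyset$; the degenerate case $L=V$, $F=\emptyset$ is handled by a radial sequence $kx$ instead). One small remark: the statement in the paper writes $x \in L \smallsetminus (\calH|_L)^c$, which is a typo for $x \in L \smallsetminus \calH|_L$, and your reading of it is the intended one.
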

 
We use Lemma~\ref{Th:PLS_Linear} to give a description of the phase limit set of a linear space $V\subset\CC^{n}$.

\begin{theorem}\label{Th:Linear}
  Let $V\subset\CC^n$ be a linear space that does not lie in any coordinate plane, $\calH\subset V$ the central arrangement of
  hyperplanes given by the  coordinates of $\CC^n$, and $\calH^c\vcentcolon=V{\smallsetminus}\calH=V\cap(\CC^\times)^n$ its complement.
  Then, the phase limit set of $\calH^c$ in $\TT^n$ is 
  \begin{equation}\label{Eq:linearCoamoeba}
    \pls(\calH^c)\ =\ 
    \bigcup_L  \Bigl( \overline{\coscrA((\calH/L)^c)} \times\coscrA( (\calH|_L)^c)\Bigr)\,,
  \end{equation}
  the union over all nonzero proper flats $L$ of $\calH$, where $\calH/L$ is the  contraction of $\calH$ by $L$ and 
  $\calH|_L$ is the restriction of $\calH$ to $L$.
\end{theorem}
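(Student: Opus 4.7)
The plan is to view Theorem~\ref{Th:Linear} as a reorganization of Lemma~\ref{Th:PLS_Linear}: every nonzero $x \in V$ determines a unique minimal flat $L$ of $\calH$, and Lemma~\ref{Th:PLS_Linear} tells us exactly which $\theta$ correspond to which $x$. Grouping the correspondents by flat yields the decomposition in~\eqref{Eq:linearCoamoeba}. I would prove the two inclusions separately, starting with the constructive reverse inclusion.

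For the reverse inclusion, fix a nonzero proper flat $L$, let $F \subset B$ be the forms vanishing on $L$, and take a point $(\eta, \phi)$ in $\overline{\coscrA((\calH/L)^c)} \times \coscrA((\calH|_L)^c)$. Pick $x \in L \setminus \calH|_L$ realizing $\phi = \Arg(\lambda_{F^c}(x))$ and a sequence $y_i \in V$ with $\|y_i\| \to 0$, $\lambda_F(y_i) \in (\CC^\times)^F$, and $\Arg(\lambda_F(y_i)) \to \eta$; such a sequence exists because $\eta$ lies in the closure of $\coscrA((\calH/L)^c)$ and arguments are invariant under positive scaling. Set $x_i \vcentcolon= x + y_i$. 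The $F^c$-coordinates $\lambda_{F^c}(x_i)$ tend to $\lambda_{F^c}(x) \in (\CC^\times)^{F^c}$ and the $F$-coordinates $\lambda_F(x_i) = \lambda_F(y_i) \in (\CC^\times)^F$ tend to $0$, so $x_i \in V \setminus \calH$ for $i$ large, $z_i \vcentcolon= \lambda_B(x_i)$ is unbounded in $(\CC^\times)^B$, and $\Arg(z_i) \to (\eta, \phi)$, placing $(\eta, \phi)$ in $\pls(\calH^c)$.

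For the forward inclusion, take $\theta \in \pls(\calH^c)$ and an unbounded sequence $z_i = \lambda_B(x_i) \in \calH^c$ with $\Arg(z_i) \to \theta$. Following the discussion preceding Lemma~\ref{Th:PLS_Linear}, rescale the $x_i$ to lie on the unit sphere of $V$ (which preserves $\Arg(z_i)$) and pass to a subsequence converging to some $x$ on the sphere; Lemma~\ref{Th:PLS_Linear} then places $\theta$ in the product indexed by the minimal flat $L$ of $\calH$ containing $x$. Since $\|x\| = 1$ we have $L \neq \{0\}$, and the remaining point is to arrange $L \neq V$, i.e.,\ $x \in \calH$. The hard part will be handling the subcase in which the naive unit-sphere rescaling produces a limit $x \notin \calH$: in that situation the original unboundedness of $z_i$ stems from $\|x_i\| \to 0$ or $\infty$ rather than from $x_i$ approaching $\calH$, and one must re-realize $\theta$ either by modifying the $x_i$ with an additive perturbation or by choosing a different rescaling aligned with a genuine ray of $\trop(\calH^c)$, so that some form in $B$ vanishes in the rescaled limit, thereby producing a nonzero proper flat whose product contains $\theta$.
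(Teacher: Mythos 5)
Your route is exactly the paper's: both treat Theorem~\ref{Th:Linear} as a re-indexing of Lemma~\ref{Th:PLS_Linear} by the minimal flat of the corresponding limit point $x$, proving the reverse inclusion via the perturbation $x_i = x + y_i$ (this is precisely the computation that yields~\eqref{Eq:PLS_hyperplane}, and your version of it is correct) and the forward inclusion by normalizing the unbounded sequence to the unit sphere of $V$ and passing to a convergent subsequence.

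You are also right to single out the case $x \notin \calH$ in the forward direction as the step that is not settled, and you should know that the paper's own proof is silent there: it asserts that the closed coamoeba is the union of the sets~\eqref{Eq:PLS_hyperplane} over \emph{all} flats and then identifies this with the theorem, without saying why the term for $L = V$ (which is $\coscrA(\calH^c)$ itself) may be dropped once the index set is cut down to nonzero proper flats. However, your proposed repairs do not close this gap. Because $V$ is a linear subspace, $\calH^c = V \cap (\CC^\times)^n$ is invariant under the diagonal $\CC^\times$-action; hence for any fixed $x \in \calH^c$ the sequence $z_i = i\,x$ is unbounded in $(\CC^\times)^n$ with constant argument $\Arg(\lambda_B(x))$, and every unit-sphere normalization of it converges to a positive scalar multiple of $x$, never to a point of $\calH$. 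There is no cleverer rescaling to be had: the diagonal orbit simply never degenerates. The forward inclusion therefore amounts to the nontrivial claim that $\coscrA(\calH^c)$ lies in the union of~\eqref{Eq:PLS_hyperplane} over nonzero proper flats, and that is not a formality. For $V = \{(a,b,c) : a + b + c = 0\} \subset \CC^3$ the three such terms are the $2$-tori $\{\theta \in \TT^3 : \theta_k - \theta_l \equiv \pi\}$, whereas $\coscrA(\calH^c)$ is $3$-dimensional and contains points such as $\Arg\bigl(1, e^{2\pi\sqrt{-1}/3}, e^{4\pi\sqrt{-1}/3}\bigr) = (0, 2\pi/3, 4\pi/3)$ lying in none of them. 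So the missing step is not ``choose a better rescaling'' but a genuine accounting of what happens when the sequence escapes purely along the lineality direction $\RR\,\bOne$; before trusting this forward inclusion (in your write-up or the paper's), you should sort out whether the $L = V$ term is meant to be included in~\eqref{Eq:linearCoamoeba}, or precisely how ``unbounded'' and ``nonminimal cone'' in Proposition~\ref{P:PLS} are intended to exclude escape along $\bOne$.
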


\begin{proof}
 The discussion preceding the statement of Lemma~\ref{Th:PLS_Linear} shows that every  point $\theta$ in the closure of
 the coamoeba of the hyperplane complement  $\calH^c$ corresponds to a point $x\in V$.
 The set of points corresponding to $x$ depends upon the minimal flat $L$ of $\calH$ containing $x$,
 so that $x\in L{\smallsetminus}\calH|_L$, 
 and~\eqref{Eq:PLS_hyperplane} describes all points $\theta$ corresponding to points in a given set $L{\smallsetminus}\calH|_L$.
 Thus the closure of the coamoeba of the hyperplane complement is the union of the sets~\eqref{Eq:PLS_hyperplane} over all
 flats, but this is the statement of Theorem~\ref{Th:Linear}.
\end{proof}

We refine the statement of Theorem~\ref{Th:Linear}.
Consider the first factor in the term of the decomposition~\eqref{Eq:linearCoamoeba} corresponding to a flat $L$,
$\overline{\coscrA((\calH/L)^c)}$, which is the union of the coamoeba of $(\calH/L)^c$ with its phase limit set, by
Proposition~\ref{P:PLS}.  
By Theorem~\ref{Th:Linear} this is the union over all nonzero flats $\Lambda$ of $\calH/L$ of a similar product, and the resulting term
of the decomposition~\eqref{Eq:linearCoamoeba} of $\overline{\coscrA(\calH^c)}$ gives the product
\[
  \overline{\coscrA((\calH/\Lambda)^c)}\ \times\ \coscrA(((\calH/L)|_\Lambda)^c)\ \times\ \coscrA( (\calH|_L)^c)\,.
\]
(We have $(\calH/L)/\Lambda = \calH/\Lambda$.)
These terms are indexed by flags of flats $\{0\}\neq L\subsetneq \Lambda$ of $\calH$.
Notice that when $\Lambda$ is a hyperplane, $(\calH/\Lambda)^c=\CC^n/\Lambda\smallsetminus\Lambda/\Lambda\simeq\CC^\times$,
whose coamoeba is isomorphic to $\TT$ and is closed. 

Continuing, we express the closure of the coamoeba as a union of products of coamoebas of restrictions of contractions.
Let
 \begin{equation}\label{Eq:flagOfFlats_subspaces}
   \defcolor{\calL}\ =\ (\{0\}=L_0\subsetneq L_1\subsetneq\dotsb\subsetneq L_k\subsetneq L_{k+1}=V)
 \end{equation}
be a flag of flats of $\calH$.
If $\defcolor{F_i}\subset B$ is the set of linear forms in $B$ that vanish along $L_i$, then
 \begin{equation}\label{Eq:flagOfFlats_subsets}
   \calF(\calL)\ =\ ( B = F_0\supsetneq F_1\supsetneq\dotsb\supsetneq F_k\supsetneq F_{k+1}=\emptyset)\,,
 \end{equation}
is a flag of flats  of $B$.

Let  $\calL$ be a flag of flats of $\calH$ with associated flag of flats $\calF(\calL)$~\eqref{Eq:flagOfFlats_subsets} of $B$.
Then $(\calH/L_{i-1})|_{L_i}$ is the hyperplane arrangement in $L_i/L_{i-1}$ induced by $F_{i-1}{\smallsetminus}F_i$. 
Let \defcolor{$((\calH/L_{i-1})|_{L_i})^c$} be the image of its complement in $(\CC^\times)^{F_{i-1}\smallsetminus F_i}$ under the map
$\lambda_{F_{i-1}\smallsetminus F_i}$.
Define 
\begin{equation}
  \label{Eq:calH(calL)}
   \defcolor{\calH(\calL)^c}\ \vcentcolon=\  \prod_{i=1}^{k+1} \bigl((\calH/L_{i-1})|_{L_i}\bigr)^c\,.
\end{equation}
Then its coamoeba in $\TT^B$ is
  \[
  \coscrA(\calH(\calL)^c)\ =\    \prod_{i=1}^{k+1} \coscrA\bigl(((\calH/L_{i-1})|_{L_i})^c\bigr)\
           \subset\ \prod_{i=1}^{k+1} \TT^{F_{i-1}\smallsetminus F_i}\ \subseteq\ \TT^B\,.
  \]
With these definitions, we give a refined version of the decomposition in Theorem~\ref{Th:Linear}.

\begin{corollary}\label{C:flagedDecomposition}
  As $\overline{\coscrA(\calH^c)}=\coscrA(\calH^c)\cup \pls(\calH^c)$, we have 
  \[
    \overline{\coscrA(\calH^c)}\ =\ \bigcup_{\calL} \coscrA(\calH(\calL)^c)\,,
  \]
  the union over all flags $\calL$ of flats in $\calH$.  
\end{corollary}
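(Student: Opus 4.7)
My plan is to prove the corollary by induction on $\dim V$, iterating the decomposition supplied by Theorem~\ref{Th:Linear}. Combining that theorem with the identity $\overline{\coscrA(\calH^c)} = \coscrA(\calH^c) \cup \pls(\calH^c)$ gives
\[
\overline{\coscrA(\calH^c)}\ =\ \coscrA(\calH^c)\ \cup\ \bigcup_L \Bigl(\overline{\coscrA((\calH/L)^c)}\times\coscrA((\calH|_L)^c)\Bigr),
\]
the union over nonzero proper flats $L$ of $\calH$. The first term is exactly $\coscrA(\calH(\calL)^c)$ for the trivial flag $\calL=(\{0\}\subsetneq V)$, since~\eqref{Eq:calH(calL)} then has the single factor $((\calH/\{0\})|_V)^c=\calH^c$.

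For the base case $\dim V=1$, the only flats are $\{0\}$ and $V$, so the only flag is the trivial one, and $\coscrA(\calH^c)$ (the coamoeba of a one-dimensional subtorus orbit in $(\CC^\times)^B$) is already closed. For the inductive step, fix a nonzero proper flat $L$ of $\calH$. Since $\dim(V/L)<\dim V$, the inductive hypothesis applied to the arrangement $\calH/L$ yields
\[
\overline{\coscrA((\calH/L)^c)}\ =\ \bigcup_{\mathcal{M}}\coscrA\bigl((\calH/L)(\mathcal{M})^c\bigr),
\]
the union over flags $\mathcal{M}$ of flats of $\calH/L$. Flats of $\calH/L$ correspond bijectively to flats of $\calH$ containing $L$, so each flag $\mathcal{M}$ lifts uniquely to a flag $\calL$ of flats of $\calH$ whose smallest nonzero member is $L$. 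Using the standard identity $(\calH/L)/(L'/L)=\calH/L'$ for any flat $L'\supseteq L$, one checks that $(\calH/L)(\mathcal{M})^c\times(\calH|_L)^c=\calH(\calL)^c$, with the $i=1$ factor of~\eqref{Eq:calH(calL)} contributing $(\calH|_L)^c$ and the remaining factors matching those of $(\calH/L)(\mathcal{M})^c$. As $L$ varies over all nonzero proper flats and $\mathcal{M}$ over flags of $\calH/L$, together with the trivial flag, one obtains every flag of flats of $\calH$ exactly once, completing the induction.

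The main obstacle is purely bookkeeping: verifying that the factorization $\calH(\calL)^c$ on a flag of length $k+1$ really matches the product obtained by peeling off the smallest nonzero member $L=L_1$ and recursing inside $\calH/L_1$. This amounts to checking associativity of the \emph{contract then restrict} operation along a chain of flats, together with the index shift between~\eqref{Eq:flagOfFlats_subspaces} and the induced flag on $\calH/L_1$. Both are standard matroid-theoretic facts, and no new geometric input beyond Theorem~\ref{Th:Linear} is required.
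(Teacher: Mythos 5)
Your proof is correct and follows essentially the same route as the paper: the paper obtains Corollary~\ref{C:flagedDecomposition} by informally iterating the decomposition of Theorem~\ref{Th:Linear} on the factor $\overline{\coscrA((\calH/L)^c)}$, which is exactly what your induction on $\dim V$ formalizes (the bookkeeping check that $(\calH|_L)^c \times (\calH/L)(\mathcal{M})^c = \calH(\calL)^c$ via $(\calH/L)/(L'/L)=\calH/L'$ matches the parenthetical identity $(\calH/L)/\Lambda = \calH/\Lambda$ in the paper's discussion). The only cosmetic nit is that in your base case $\lambda_B(\CC^\times)$ is a coset of a one-dimensional subtorus rather than a subtorus itself, but the coamoeba of a coset of a subtorus is still a coset of a compact subtorus and hence closed, so your conclusion stands.
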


The closure of $\coscrA(\calH(\calL)^c)$ is the \demph{stratum} of $\overline{\coscrA(\calH^c)}$
corresponding to the flag of flats $\calL$.
We will show that this terminology is consistent with that in Section~\ref{S:Beginnings}.

\subsection{Relation to the tropical variety}

The tropicalization $\trop(\calH^c_B)$ of $\calH^c_B=\calH^c$ depends only on the  matroid of
$B$~\cite{Stu_CBMS}. 
We relate Corollary~\ref{C:flagedDecomposition} to the structure of $\trop(\calH^c_B)$.

A \demph{basis} of $B$ is a subset $E$ of $B$ that forms a basis for $V^*$.
Given a function $\omega\colon B\to\RR$, written $\omega\in\defcolor{\RR^B}$, a subset $S\subset B$ has a \demph{weight},
which is the sum of the values of $\omega$ at the elements of $S$.
Let \defcolor{$B_\omega$} be the set of bases of $B$ of maximal weight\footnote{This convention, while opposite
to~\cite{AK}, is necessary for compatibility with Section~\ref{S:Beginnings}.}. 
This forms a matroid on the ground set $B$.
A vector in $B$ is a \demph{loop} in $B_\omega$ if it does not lie in any basis in $B_\omega$.
When $B_\omega$ has no loops, $\omega$ induces a flag of flats \defcolor{$\calL(\omega)$}~\eqref{Eq:flagOfFlats_subspaces}
of $\calH$ and a corresponding flag of flats $\calF(\omega)$~\eqref{Eq:flagOfFlats_subsets} of $B$ with the following
property:
 \begin{equation}\label{Eq:omega}
   \mbox{$\omega$ is constant on each $F_{i-1}{\smallsetminus}F_i$ with
     $\omega|_{F_{i}\smallsetminus F_{i+1}} > \omega|_{F_{i-1}\smallsetminus F_i}$.}
  %
  %
 \end{equation}
Conversely, given any flag of flats $\calL$~\eqref{Eq:flagOfFlats_subspaces} of $\calH$ with corresponding flag
$\calF(\calL)$~\eqref{Eq:flagOfFlats_subsets} of flats of $B$, for any weight $\omega\in\RR^B$ satisfying~\eqref{Eq:omega},
we have $\calL=\calL(\omega)$ and $B_\omega$ has no loops. 
Furthermore, these $\omega$ with $\calL(\omega)=\calL$ form the relative interior of a nonempty cone
$\defcolor{\sigma_\calL}\subset\RR^b$.

Let \defcolor{$\trop(B)$} be the subset of $\RR^B$ consisting of those $\omega$ such that
$B_\omega$ contains no loops.
This  has a lineality space containing the vector $\defcolor{\bOne_B}\vcentcolon=(1,\dotsc,1)$, as
$B_\omega=B_{\omega+\bOne_B}$ for any $\omega\in\RR^B$.
This set $\trop(B)$ coincides with the tropical variety of the linear space $\lambda_B(V)\cap(\CC^\times)^B=\calH^c_B$, and
it admits several fan structures, which are studied in~\cite{AK,FS}.
In the \demph{fine subdivision} of $\trop(B)$ from~\cite{AK} each cone has the form $\sigma_\calL$, for some flag of flats $\calL$ of
$\calH_B$. 

We relate this to the varieties $\calH(\calL)^c$~\eqref{Eq:calH(calL)}.

\begin{lemma}\label{L:calLinitial}
  Let $\omega\in\ZZ^B$ be a weight lying in the relative interior of the cone $\sigma_\calL$.
  Then
\[
   \ini_\omega \calH^c_B\ =\ \calH(\calL)^c\,.
\]
\end{lemma}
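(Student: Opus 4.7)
The plan is to compute $\ini_\omega \calH^c_B$ and $\calH(\calL)^c$ explicitly as linear subvarieties of $(\CC^\times)^B$ by reading off their defining linear forms, and then to match the two presentations. Write $R \subseteq \CC^B$ for the kernel of the surjection $e_b \mapsto b$ to $V^*$, so that $\lambda_B(V) \subseteq \CC^B$ is cut out by the linear forms $\sum_{b \in B} c_b z_b$ for $c \in R$, and $\calH^c_B$ is its torus intersection. Since $\omega(t)^{-1}$ acts linearly on $\CC^B$ and preserves $(\CC^\times)^B$, the family $\omega(t)^{-1}\cdot \lambda_B(V)$ is a flat family of linear subspaces of constant dimension, and its flat limit is a linear subspace whose defining equations I would compute one generator at a time.

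Substituting $z_b = t^{\omega_b}z'_b$ into $\sum_b c_b z_b = 0$ and then rescaling by $t^{-m(c)}$, where $m(c) = \min_{b\in \mathrm{supp}(c)} \omega_b$, gives as $t\to 0$ the linear form $\sum_{b : \omega_b = m(c)} c_b z'_b$. Using hypothesis~\eqref{Eq:omega}, let $i_0 = i_0(c)$ be the smallest index with $\mathrm{supp}(c)\not\subseteq F_{i_0}$; then $\mathrm{supp}(c)\subseteq F_{i_0-1}$, the minimum $m(c)=w_{i_0}$ is attained exactly on $\mathrm{supp}(c)\cap (F_{i_0-1}\smallsetminus F_{i_0})$, and the limiting form is $\sum_{b\in F_{i_0-1}\smallsetminus F_{i_0}} c_b z'_b$. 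Thus $\ini_\omega \calH^c_B$ is cut out in $(\CC^\times)^B$ by all such linear forms as $c$ ranges over $R$.

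On the other side, the Zariski closure of $\calH(\calL)^c$ in $\CC^B$ is the product $\prod_{i=1}^{k+1}\lambda_{F_{i-1}\smallsetminus F_i}(L_i/L_{i-1})$, whose $i$-th factor is the linear subspace of $\CC^{F_{i-1}\smallsetminus F_i}$ cut out by $\sum_{b\in F_{i-1}\smallsetminus F_i} d_b z_b = 0$ for tuples $d$ satisfying $\sum_{b} d_b \cdot b \in \mathrm{span}(F_i) \subseteq V^*$; indeed, this is precisely the statement that the restrictions $b|_{L_i/L_{i-1}}$ satisfy a linear dependence with coefficients $d_b$. A short dictionary then matches the two descriptions: given such a $d$, write $\sum_{b\in F_{i-1}\smallsetminus F_i} d_b b = \sum_{b\in F_i} e_b b$ and extend to $c\in R$ with $c|_{F_{i-1}\smallsetminus F_i} = d$, $c|_{F_i} = -e$, and $c|_{B\smallsetminus F_{i-1}} = 0$; conversely any $c\in R$ with $i_0(c)=i$ yields such a $d$ by restriction to $F_{i-1}\smallsetminus F_i$. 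So $\ini_\omega \calH^c_B$ and $\calH(\calL)^c$ agree.

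The main technical concern is to justify that the linear generators alone determine the flat limit, with no extra constraints coming from higher-degree elements of the ideal of $\lambda_B(V)$. This is essentially automatic because $\omega(t)^{-1}\cdot \lambda_B(V)$ is a family of linear subspaces of fixed dimension, whose flat limit is a linear subspace of the same dimension cut out by the limits of any spanning set of linear relations; beyond this observation, the argument is just linear algebra on $V^*$ organized by the flag $\calL$.
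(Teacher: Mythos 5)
Your proof is correct but takes a genuinely different route from the paper's. The paper works on the parameterization side: it splits $V = V/L\oplus L$ and introduces a compensating one-parameter rescaling $\nu(t)$ of $V$ so that $\omega(t)\cdot\lambda_B(\nu(t).x)$ has an explicit limit, yielding $\ini_\omega\calH_B^c = \lambda_F(V/L\smallsetminus\calH/L)\times\lambda_{F^c}(L\smallsetminus\calH|_L)$ in one step (and then inducts on the flag length). You work instead on the equation side: you identify the space $R$ of linear relations cutting out $\lambda_B(V)$, compute the initial form of each $c\in R$ (supported on the block $F_{i_0-1}\smallsetminus F_{i_0}$ singled out by $\omega$), and then run a clean dictionary matching $\{L(c)\mid c\in R\}$ with the annihilator of the product $\prod_i\lambda_{F_{i-1}\smallsetminus F_i}(L_i/L_{i-1})$ one block at a time. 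Your version makes the filtration $R\supseteq R_1\supseteq\dotsb$ induced by $\calF(\calL)$ visible, while the paper's version avoids all discussion of whether initial forms of generators generate the initial ideal.

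One small caution about your closing remark: the assertion that the flat limit of a family of linear subspaces ``is cut out by the limits of any spanning set of linear relations'' is false in general --- the initial forms of a basis of $R$ can become linearly dependent, and then their common zero set is too large. (This is the usual Gr\"obner-basis phenomenon.) What saves you is that you do not actually use this: you take all of $R$, and the dictionary shows that $\{L(c)\mid c\in R\}$ is exactly the annihilator of $\overline{\calH(\calL)^c}$, which is $(|B|-d)$-dimensional. Combined with the fact that the flat limit is a $d$-dimensional linear space contained in $\calV(\{L(c)\})=\overline{\calH(\calL)^c}$, this forces equality. If you keep this proof, I would replace the ``any spanning set'' sentence with that explicit dimension count so the argument is airtight.
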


This lemma gives interpretations for every term in the decomposition of $\overline{\coscrA(\calH_B^c)}$ of
Corollary~\ref{C:flagedDecomposition} in terms of cones in the fine subdivision of $\trop(B)$, and thus the corresponding terms from
Proposition~\ref{P:PLS}  and the strata of $\pls(\calH_B^c)$.

\begin{corollary}\label{C:fine_subdivision}
  For a cone $\sigma_\calL$ in the fine subdivision of $\trop(B)$, we have
  $\ini_{\sigma_\calL}\calH^c_B=\calH(\calL)^c$.
  When $\calL$ is not the flag $\{0\}\subset V$, we have
  $\pls_{\sigma_\calL}(\calH^c_B)=\overline{\coscrA(\calH(\calL)^c)}$.
  The closure of each term of the decomposition of the closure of the coamoeba in
  Corollary~\ref{C:flagedDecomposition} is a stratum of $\pls(\calH^c_B)$ for a 
  cone in the fine subdivision of $\trop(B)$.
\end{corollary}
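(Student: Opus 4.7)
The plan is to read the corollary off of Lemma~\ref{L:calLinitial}, the definition of the strata $\pls_\sigma(X) := \overline{\coscrA(\ini_\sigma X)}$ recorded in Section~\ref{S:Beginnings}, and the combinatorics of the fine subdivision of $\trop(B)$ from~\cite{AK}. None of the individual steps requires a new idea; the content is bookkeeping that ties matroid-theoretic indexing data to the tropical/coamoeba machinery already set up.

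First, I would record that the fine subdivision is a fan structure on $\trop(B) = \trop(\calH_B^c)$ whose maximal-by-flag cones are precisely the $\sigma_\calL$ as $\calL$ ranges over flags of flats of $\calH_B$. Then for any such $\sigma_\calL$, picking a lattice point $\omega$ in its relative interior and applying Lemma~\ref{L:calLinitial} gives
\[
 \ini_{\sigma_\calL}\calH_B^c\ =\ \ini_\omega \calH_B^c\ =\ \calH(\calL)^c,
\]
using the general fact that $\ini_\omega X = \ini_\sigma X$ whenever $\omega$ lies in the relative interior of a cone $\sigma$ of a fan structure on $\trop(X)$. This is the first claim.

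Second, substituting the previous identity into the definition of a stratum yields $\pls_{\sigma_\calL}(\calH_B^c) = \overline{\coscrA(\calH(\calL)^c)}$ for every flag $\calL$. The statement that this is a stratum of $\pls(\calH_B^c)$ requires $\sigma_\calL$ to be a non-minimal cone, i.e.\ to lie outside the lineality space of $\trop(B)$. The step that deserves the most care, and which I expect to be the main (albeit modest) obstacle, is the identification of the lineality space in the fine subdivision with $\sigma_{\calL_0}$ for the trivial flag $\calL_0 = (\{0\} \subsetneq V)$. I would verify this directly from~\eqref{Eq:omega}: the condition $\calL(\omega) = \calL_0$ amounts to there being no strict inequalities in the chain forced by $\omega$, which holds exactly when $\omega$ is constant on $B$; the space of constant weights is spanned by $\bOne_B$ and is precisely the lineality direction already noted for $\trop(B)$.

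The third claim then collapses to a matching of indexing sets. Both the terms in Corollary~\ref{C:flagedDecomposition} and the strata $\pls_{\sigma_\calL}(\calH_B^c)$ arising from the fine subdivision are parameterized by nontrivial flags of flats of $\calH_B$, and the two preceding steps identify the closure of the coamoeba term indexed by $\calL$ with the stratum indexed by $\sigma_\calL$. Granting the lineality-space identification above, no further work is required.
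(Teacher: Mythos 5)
Your argument is correct and is exactly the reasoning the paper intends: the corollary is stated directly after Lemma~\ref{L:calLinitial} with no separate proof, being treated as an immediate consequence of that lemma together with the definition $\pls_\sigma(X)=\overline{\coscrA(\ini_\sigma X)}$ and the indexing of cones of the fine subdivision by flags of flats. The one place to tighten your wording is the lineality-space check you rightly flagged: when the matroid $B$ is disconnected, the set-theoretic lineality space of $\trop(B)$ is strictly larger than $\RR\bOne_B$ (it contains $\bOne_{B_i}$ for each connected component $B_i$), so ``precisely the lineality direction'' overstates it; what you actually need, and what does hold, is that $\sigma_{\calL_0}=\RR\bOne_B$ is the unique minimal cone of the \emph{fine subdivision}, so every $\sigma_\calL$ with $\calL$ nontrivial is non-minimal and hence contributes a stratum. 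This small imprecision is shared by the paper's own standing convention (that the minimal cone is the set-theoretic lineality space), which the fine subdivision does not literally satisfy in the disconnected case, so it is not a defect of your proof.
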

\begin{definition}\label{D:componentsOfComplements}
  We simplify our notation, writing \defcolor{$\ini_\calL H^c_B$} for $\ini_{\sigma_\calL}\calH^c_B=\calH(\calL)^c$, and when
  $\calL$ is $\{0\}\subsetneq L\subsetneq V$ for a flat $L$, we write \defcolor{$\ini_L\calH^c_B$} for $\ini_\calL H^c_B$.
  We define \defcolor{$\pls_\calL(\calH^c_B)$} to be the stratum $\pls_{\sigma_{\calL}}(\calH^c_B)$ and write
  \defcolor{$\pls_L(\calH^c_B)$} when $\calL$ is the flag $\{0\}\subsetneq L\subsetneq V$.
\end{definition}
\begin{proof}[Proof of Lemma~\ref{L:calLinitial}]
  Suppose first that $k=1$ in~\eqref{Eq:flagOfFlats_subspaces}, so that the flag of
  flats $\calL$ is $\{0\}\subsetneq L\subsetneq V$.
  Set $F\vcentcolon=\{b\in B\mid b|_L\equiv 0\}$ and $F^c\vcentcolon=B\smallsetminus F$.
  Then the weight $\omega$ is constant on each of $F$ and $F^c$ with $a\vcentcolon= w|_F > w|_{F^c}=\vcentcolon b$.

  For $x\in V\smallsetminus\calH_B$ and $t\in\CC^\times$, we have
  \[
  \omega(t)\cdot\lambda_B(x)\ =\
  (t^a \lambda_F(x)\,,\, t^b\lambda_{F^c}(x))\ \in\ (\CC^\times)^F\times(\CC^\times)^{F^c}\,.
  \]
  Fix a splitting of the map $V\twoheadrightarrow V/L$ and use it to write $V=V/L\oplus L$.
  We may write a point $x\in V$ as $x=(y,z)$ with $y\in V/L$ and $z\in L$.
  Then
  \[
  \omega(t)\cdot\lambda_B(x)\ =\
  (t^a \lambda_F(y)\,,\, t^b\lambda_{F^c}(y) + t^b\lambda_{F^c}(z))\,,
  \]
  as $\lambda_F(L)=\{0\}$ and $z\in L$.

  Define an action $\nu$ of $\CC^\times$ on $V$ by $\nu(t).x=\nu(t).(y,z)=(t^{-a}y, t^{-b} z)$ for $t\in\CC^\times$.
  Then
  \[
  \omega(t)\cdot\lambda_B(\nu(t).x)\ =\ (\lambda_F(y),t^{b-a}\lambda_{F^c}(y)+\lambda_{F^c}(z))\,.
  \]
  As $b-a<0$,
  \[
   \lim_{t\to\infty} \omega(t)\cdot\lambda_B(\nu(t).x)\ =\ (\lambda_F(y),\lambda_{F^c}(z))\,.
  \]
  Since $\ini_\omega\calH^c_B=\lim_{t\to\infty} \omega(t)\cdot\calH^c_B$ and $\nu$ is a linear action of $\CC^\times$ on $V$, we have
  \[
  \ini_\omega\calH^c_B\ =\  \lim_{t\to\infty} \omega(t)\cdot\lambda_B(V\smallsetminus\calH_B)
  \ =\ \lambda_F(V/L\smallsetminus \calH/L)\times \lambda_{F^c}(L\smallsetminus \calH|_L)\ =\ \calH(\calL)^c\,.
  \]
  The case of a general flag of flats follows by an induction on $k$.
\end{proof}

There is a coarser subdivision of $\trop(B)$ which gives the \demph{Bergman fan}, written  \defcolor{$\Berg(B)$}.
Two weights $\omega,\nu\in\RR^B$ are \demph{equivalent} if they induce the same matroid, $B_\omega=B_\nu$.
Equivalence classes of weights form the relative interiors of the cones of the Bergman fan of $B$.

\begin{example}\label{Ex:BergmanFan}
  Consider the Bergman fan  for the matroid of Example~\ref{Ex:RunningI}.
  Let $e_1,\dotsc,e_6$ be the coordinate basis of $\RR^B=\RR^6$ with $e_i$ corresponding to $b_i$, that is, $e_i (b_j)=\delta_{ij}$. 
  We describe the structure of $\Berg(B)$ modulo $\RR\bOne_B$.
  For $I\subset\{1,\dotsc,6\}$ let \defcolor{$e_I\in \RR^B$} be the sum of $e_i$ for $i\in I$ and set $\defcolor{B_I}\vcentcolon=B_{e_I}$.
  Observe that
 \[
    B_{12}\ =\ \left\{ \{b_1,b_2,b_3\}\,,\; \{b_1,b_2,b_5\}\,,\; \{b_1,b_2,b_6\}\right\}\,.
 \]
 Since $b_4$ lies in no basis, it is a  loop in $B_{12}$.
 Notice that $B_{124}$  and  $B_{236}$ are loopless; for example
 $\{b_2,b_3,b_4\}$ is a basis for  $B_{124}$, as are the three bases in $B_{12}$ (and five others).
 Further observe that for $i=1,\dotsc,6$, $B_{i}$ consists of every basis that contains $b_i$ and it also has no loops.
 In fact the eight vectors $\omega\in\{e_{124}, e_{236}\,,\,e_1,\dotsc,e_6\}$ each give a ray of
 $\Berg(B)/\RR\bOne_B$.

 This fan $\Berg(B)/\RR\bOne_B$ is pure two-dimensional, and it has fifteen  maximal cones that occur in two types.
 One type are spanned by $e_i,e_j$ for $i\neq j$ and not both in $\{1,2,5\}$ or in $\{2,3,6\}$ and these correspond to the
 nine flats $p_{ij}=\ell_i\cap\ell_j$ of the line arrangement.
 Write \defcolor{$\sigma_{ij}$} for these nine cones.
 The second type consists of the six cones \defcolor{$\tau_{i,ijk}$} spanned by $e_i$ and $e_{ijk}$, for $\{i,j,k\}$ either $\{1,2,4\}$ or
 $\{2,3,6\}$. 

 If we intersect  $\Berg(B)/\RR\bOne_B$ with the sphere in $\RR^6/\RR\bOne_B$, we obtain a graph whose vertices correspond to rays of
 $\Berg(B)$ and edges to maximal cones.
 We show this graph in Figure~\ref{F:edge_graph}.
 \begin{figure}[htb]
   \centering
   \begin{picture}(201,151)(-100.5,-10)
   \put(-100.5,-7.2){\includegraphics{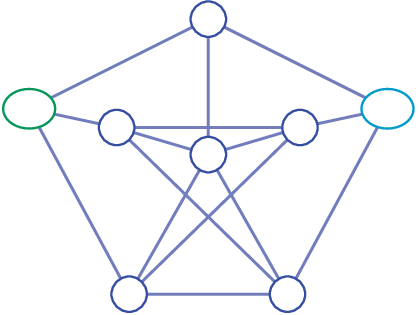}}

                                      \put(-5,133.5){\small $e_2$}
         \put(-63,119){\small$\tau_{2,124}$}                       \put(38,118){\small$\tau_{2,236}$}
         \put(-54,98){\small$\tau_{1,124}$}                       \put(31,98){\small$\tau_{3,236}$}
            \put(-53,94){\vector(-2,-1){10}}            \put(54,94){\vector(2,-1){10}}
         \put(3,107){\small$\sigma_{25}$}
                              \put(9,87){\small$\sigma_{13}$}
       \put(-95,90){\small $e_{124}$}                          \put(77,90){\small $e_{236}$}
   \put(-49,81){\small $e_1$}   \put(-4,67){\small $e_5$} \put(40.5,81){\small $e_3$}
       \put(-50,56){\small$\sigma_{15}$}  \put(-35,62){\vector( 2,1){20}}  
      \put( 36,56){\small$\sigma_{35}$}  \put( 36,62){\vector(-2,1){20}}
      \put(-86,38){\small$\tau_{4,124}$}                        \put(61.5,38){\small$\tau_{6,236}$}
           \put(-38,30){\small$\sigma_{45}$}                    \put(24,30){\small$\sigma_{56}$}
              \put(-20,14){\small$\sigma_{34}$}               \put(6,14){\small$\sigma_{16}$}
            \put(-42.5,0.5){\small $e_4$}  \put(33,0.5){\small $e_6$}
                         \put(-6,-5.5){\small$\sigma_{46}$}
  \end{picture}
   \caption{The Bergman fan as an graph.}\label{F:edge_graph}
 \end{figure}
 In Example~\ref{Ex:Tdiscr} we will show a projection of this fan into $\RR^3$ where $e_i\mapsto b_i$.\hfill$\diamond$
\end{example}

By Lemma~\ref{L:rays}, the strata which cover $\pls(\calH^c_B)$ are among those indexed by the rays of the Bergman fan.
We will follow~\cite{FS} to describe the rays of the Bergman fan.
We begin by introducing some more structure of matroids.

\begin{definition}     \label{Def:Connectivity}
 A finite spanning set $B\subset V^*$ of nonzero vectors is \demph{disconnected} if there is a nontrivial direct sum
 decomposition $V^*=V^*_1\oplus V^*_2$ ($V^*_i\neq \{0\}$) with $B=(B\cap V^*_1)\cup (B\cap V^*_2)$.
 Otherwise $B$ is \demph{connected}.\hfill$\diamond$
\end{definition}

This terminology refers to the connectivity of the exchange graph of the matroid of $B$.
This is a graph with vertex set $B$ that has an edge between $b$ and $b'$ if there exists a basis $E$ of $B$ containing $b$ such
that $E{\smallsetminus}\{b\}\cup\{b'\}$ is also a basis.
The vertex set of a connected component of the exchange graph of $B$ forms a \demph{connected component} of $B$.

Given a finite spanning set $B\subset V^*$ of nonzero vectors, let $B_1,\dotsc,B_r$ be the connected components of $B$.
If for each $i$ we write $\defcolor{V^*_i}$ for the linear span of $B_i$, then we have the direct sum decomposition
$V^*=V^*_1\oplus \dotsb\oplus V^*_r$ and $B=B_1\sqcup\dotsb\sqcup B_r$.
Let $\defcolor{c(B)}\vcentcolon=r$ be the number of components of $B$.
If we set $\defcolor{V_i}\vcentcolon=\bigcap\{(V^*_j)^\perp\mid i\neq j\}$, then these subspaces of $V$ are in direct sum and
$V=V_1\oplus\dotsb\oplus V_r$. 
For each $i$, $B_i$ defines a hyperplane arrangement \defcolor{$\calH_{B_i}$} or \defcolor{$\calH_i$} in $V_i$ with complement
$\defcolor{\calH^c_i}\subset(\CC^\times)^{B_i}$.

We give some consequences of this definition.

\begin{lemma}\label{L:easyConsequence}
  Suppose that $B\subset V^*$ is a finite spanning set of nonzero vectors with connected components $B_1,\dotsc,B_r$ and corresponding
  decompositions $V^*=V^*_1\oplus \dotsb\oplus V^*_r$ and $V=V_1\oplus\dotsb\oplus V_r$ as above.
  \begin{enumerate}
   \item A set $E\subset B$ is a basis for $B$ if and only if for each $i$, $E_i\vcentcolon=E\cap B_i$ is a basis for $B_i$.
   \item The map $\lambda_B\colon V\to \CC^B$ is the product (direct sum) of maps
     $\lambda_{B_i}\colon V_i\to \CC^{B_i}$.
     We have the product decomposition of the hyperplane complement
   \[
        \calH^c_B\ =\ \calH^c_{B_1}\times \calH^c_{B_2}\times \dotsb \times  \calH^c_{B_r}\,,
   \]
    and a similar product decompositions for its coamoeba, amoeba, and Bergman fan.
  \end{enumerate}
\end{lemma}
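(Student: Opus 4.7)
\medskip

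\noindent\textbf{Proof plan for Lemma~\ref{L:easyConsequence}.}

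The plan is to work directly from the definitions, with the key input being the direct sum decomposition $V^* = V^*_1 \oplus \cdots \oplus V^*_r$ and the dual decomposition $V = V_1 \oplus \cdots \oplus V_r$ where $V_i = \bigcap_{j \neq i}(V^*_j)^\perp$. The main observation, used throughout, is that for $b \in B_i$ and $v = v_1 + \cdots + v_r$ with $v_j \in V_j$, one has $\langle b, v_j\rangle = 0$ for $j \neq i$, so $\langle b, v\rangle = \langle b, v_i\rangle$.

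For (1), I would argue by pure linear algebra. If each $E_i \vcentcolon= E\cap B_i$ is a basis for $V^*_i$, then $E = \bigsqcup E_i$ spans $V^* = \bigoplus V^*_i$ and is linearly independent (since the $V^*_i$ are in direct sum), so $E$ is a basis for $V^*$. Conversely, if $E$ is a basis for $V^*$, then each $E_i \subset V^*_i$ is automatically linearly independent. Writing any $\varphi_i \in V^*_i$ in terms of the basis $E = \bigsqcup E_j$ and projecting onto $V^*_i$ shows $\varphi_i$ is already in the span of $E_i$, so $E_i$ spans $V^*_i$.

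For the first assertion of (2), the observation above gives $\lambda_B(v) = (\lambda_{B_1}(v_1), \ldots, \lambda_{B_r}(v_r))$ under the identification $\CC^B = \prod_i \CC^{B_i}$. Consequently a point $v \in V$ lies in $V\smallsetminus \calH_B$ precisely when each $v_i \in V_i\smallsetminus\calH_{B_i}$, which after applying $\lambda_B$ gives the product decomposition of $\calH^c_B$. The decompositions of coamoeba and amoeba are then immediate, since $\Arg$ and $\Log$ act coordinatewise and hence turn a product of subvarieties into a product of images in $\TT^B = \prod_i \TT^{B_i}$ and $\RR^B = \bigoplus_i \RR^{B_i}$, respectively.

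For the Bergman fan, I would use (1) to propagate the product structure through the matroid data. Writing a weight as $\omega = (\omega_1,\ldots,\omega_r) \in \bigoplus_i \RR^{B_i}$, any basis $E$ of $B$ decomposes as $E = \bigsqcup E_i$ with each $E_i$ a basis of $B_i$, and its $\omega$-weight is $\sum_i \omega_i(E_i)$. Thus $E \in B_\omega$ if and only if each $E_i \in (B_i)_{\omega_i}$, so a vector $b \in B_i$ is a loop of $B_\omega$ exactly when it is a loop of $(B_i)_{\omega_i}$, giving $\trop(B) = \prod_i \trop(B_i)$ as sets. Equivalently, $B_\omega = B_\nu$ if and only if $(B_i)_{\omega_i} = (B_i)_{\nu_i}$ for every $i$, so the equivalence classes defining the cones of $\Berg(B)$ are precisely products of equivalence classes for the $\Berg(B_i)$. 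The main (and only real) obstacle is keeping the identifications $V^* = \bigoplus V^*_i$, $V = \bigoplus V_i$, $B = \bigsqcup B_i$, and $\CC^B = \prod \CC^{B_i}$ compatible with one another; once that bookkeeping is set, every claim reduces to one of these observations.
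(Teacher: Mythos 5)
Your proof is correct. The paper gives no formal proof for this lemma; its only comment on (2) appears in the paragraph following the statement, where the product decomposition of $\calH^c_B$ is explained via the flag-of-flats machinery: $\calH^c_B = \calH(\calL)^c$ for any of the $r!$ flags $\calL$ whose flats are partial direct sums of the $V_i$, so $\calH^c_B$ inherits the product form of $\calH(\calL)^c$ from~\eqref{Eq:calH(calL)} and Lemma~\ref{L:calLinitial}. Your route is genuinely different and more elementary: you argue directly from the dual decompositions $V^* = \bigoplus V^*_i$ and $V = \bigoplus V_i$, using the observation $\langle b, v\rangle = \langle b, v_i\rangle$ for $b\in B_i$ to split $\lambda_B$, and then deduce everything coordinatewise. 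This avoids invoking the initial-scheme apparatus and also supplies an explicit argument for part (1) and the Bergman-fan decomposition, which the paper leaves entirely to the reader. The paper's approach has the advantage of reusing machinery already built (and makes the observation that such flags are ``trivial'' in a precise sense), whereas yours is self-contained and would be accessible to a reader who has not yet absorbed Lemma~\ref{L:calLinitial}. Both are correct; your treatment of the Bergman fan via the product of basis sets $B_\omega = \prod_i (B_i)_{\omega_i}$ is a clean way to handle the one piece that requires the matroid (rather than just the linear-algebra) structure.
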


The decomposition of $\calH^c_B$ from Lemma~\ref{L:easyConsequence}(2) is because  $\calH^c_B=\calH(\calL)^c$, where $\calL$ is any of the
$r!$ flags of flats in which each flat $L_j$ is a direct sum of some $j$ of the $V_i$.
In fact, any flag $\calL$ that coarsens such a flag is also trivial in that $\calH(\calL)^c=\calH^c_B$.
A consequence of Lemma~\ref{L:easyConsequence} is that to understand $\calH^c_B$ and its tropical objects it suffices to restrict ourselves
to the case when $B\subset V^*$ forms a connected matroid. 

Suppose that $B\subset V^*$ is a connected matroid.
A flat $L$ of $\calH$ is a \demph{flacet} if and only if the flat $\defcolor{F(L)}\vcentcolon=\{b\in B\mid b|_L\equiv 0\}$  and
its complement $\defcolor{F(L)^c}\vcentcolon=B{\smallsetminus}F(L)$ are connected matroids.
For the vector configuration/line arrangement of Example~\ref{Ex:RunningI}, the flacets are the lines (hyperplanes) and the two
points $p_{124}$ and $p_{236}$.
No other point $p_{ij}=\ell_i\cap\ell_j$ is a flacet, as $B_{p_{ij}}=\{b_i,b_j\}$ is a basis for $(V/p_{ij})^*$, showing
that it is disconnected. 
The lines $\ell$ are flacets because each line contains more than two points, for otherwise
$B\smallsetminus F(\ell)$ is a basis for $L^*$, where $L$ is the 2-plane corresponding to $\ell$.

Given a flacet $L$, let $\defcolor{\omega_L}\in\RR^B$ be the indicator function of $F(L)$.
This is the function $\omega_L\colon B\to\RR$ whose value at $b\in B$ is $1$ if $b|_L\equiv0$ and $0$ otherwise.
The purpose of this definition is the following.

\begin{proposition}[{\cite[Proposition~2.6]{FS}}]
  The rays of the Bergman fan $\Berg(B)$ of a connected matroid are generated by the indicator functions of its flacets.
\end{proposition}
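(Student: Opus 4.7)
The plan is to prove two complementary statements: (i) every ray of $\Berg(B)$ modulo the lineality is of the form $\RR_{\geq 0}\omega_L$ for some flat $L$ of $\calH_B$, and (ii) $\omega_L$ generates a ray of $\Berg(B)$ if and only if $L$ is a flacet. For (i), I would use that the fine subdivision refines $\Berg(B)$, so every ray of $\Berg(B)$ coincides with a single cone $\sigma_\calL$ of the fine subdivision. Since $\sigma_\calL$ has dimension $k+1$ in $\RR^B$, where $k$ is the number of proper nonzero flats in $\calL$, being a ray modulo $\RR\bOne_B$ forces $k=1$. Hence $\calL = (\{0\}\subsetneq L \subsetneq V)$ and the ray is generated by $\omega_L$.

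For the forward direction of (ii), I would show that if $L$ is a flacet and $B_\omega = B_{\omega_L}$, then $\omega\in\RR_{\geq 0}\omega_L + \RR\bOne_B$. The key observation is that $B_{\omega_L}$ equals the direct sum of matroids $B|_{F(L)} \oplus B/F(L)$, since a basis $E$ of $B$ is $\omega_L$-maximal exactly when $E\cap F(L)$ is a basis of $B|_{F(L)}$. Thus for any $\omega$ with $B_\omega = B_{\omega_L}$, every basis of $B|_{F(L)}$ appears as $E\cap F(L)$ for some $\omega$-maximal basis $E$, so all bases of $B|_{F(L)}$ share the same $\omega|_{F(L)}$-weight; the analogous statement holds for $B/F(L)$. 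Since $L$ is a flacet, both matroids are connected, and I would invoke the standard lemma that a connected matroid with equi-weighted bases admits only constant weight functions (a consequence of basis exchange applied along the connected element-exchange graph) to conclude that $\omega|_{F(L)}$ and $\omega|_{F(L)^c}$ are each constant. Writing these constants as $a, b$ gives $\omega = (a-b)\omega_L + b\bOne_B$ with $a\geq b$.

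For the converse, suppose $L$ is not a flacet; I would exhibit $\omega_L$ in the relative interior of a two-dimensional cone of $\Berg(B)$ modulo lineality. If $B|_{F(L)}$ is disconnected as $F_1\sqcup F_2$, the corresponding dual decomposition $L^\perp = V_1^*\oplus V_2^*$ yields flats $L_i = (V_i^*)^\perp$ of $\calH$ with $L = L_1\cap L_2$ and $\omega_L = \omega_{L_1} + \omega_{L_2}$. A direct rank computation, exploiting that any basis of the direct-sum matroid $B|_{F(L)}$ must contain exactly $\rank F_i$ elements of each $F_i$, shows that $B_{a\omega_{L_1} + b\omega_{L_2}} = B_{\omega_L}$ for all $a,b>0$, placing $\omega_L$ in the relative interior of the cone spanned by $\omega_{L_1}$ and $\omega_{L_2}$. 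If instead $B/F(L)$ is disconnected as $G_1\sqcup G_2$, the flats $L'_i$ of $\calH$ with $F(L'_i) = F(L)\cup G_i$ satisfy $\omega_L \equiv \omega_{L'_1} + \omega_{L'_2} \pmod{\RR\bOne_B}$, and the dual rank computation again produces a two-dimensional Bergman cone containing $\omega_L$ in its relative interior.

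The main obstacle I expect is the forward direction of (ii): the passage from ``$B_{\omega_L}$ is a direct sum of connected matroids'' to ``$\omega$ is constant on each summand.'' Although this relies only on the standard fact that element exchanges preserve $\omega$-weight on equi-weighted-basis sets, one must apply it carefully within each component of the direct sum $B|_{F(L)} \oplus B/F(L)$, since exchanges in the direct sum occur only within a single component, so connectedness is needed for each summand separately---which is precisely the flacet condition.
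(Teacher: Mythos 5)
The paper does not actually prove this proposition: it cites Feichtner--Sturmfels \cite[Prop.~2.6]{FS}, whose argument runs through the matroid polytope and its normal fan (flacets index facets of the polytope, hence the name). Your proof takes a genuinely different, purely matroid-theoretic route via the fine subdivision and the basis-exchange graph, and as far as I can tell it is correct.

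A few comments on the two approaches and on details you should nail down. Your part (i) is clean: a ray of $\Berg(B)$ together with the lineality $\RR\bOne_B$ is a $2$-dimensional cone, and since the fine subdivision (cones $\sigma_\calL$ of dimension $k{+}1$ for a $k$-step flag) refines $\Berg(B)$, that half-plane must itself be a single $\sigma_\calL$ with $k=1$, forcing $\calL = (\{0\}\subsetneq L\subsetneq V)$. Your forward direction of (ii) hinges on $B_{\omega_L} = B|_{F}\oplus B/F$ for $F=F(L)$ and on the lemma that a connected matroid with equi-weighted bases carries only constant weights; this lemma is standard (circuit-exchange shows any two elements of a common circuit have equal weight, and in a connected matroid every pair of elements lies in a common circuit), but it would strengthen the write-up to state it as a numbered lemma with proof rather than as a parenthetical aside. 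You correctly observe that the argument must be run separately on $B|_F$ and $B/F$ because exchanges in a direct sum stay within a summand. For the converse you should also verify explicitly that $\omega_{L_1},\omega_{L_2}$ (respectively $\omega_{L_1'},\omega_{L_2'}$) are linearly independent modulo $\RR\bOne_B$, so that the cone you exhibit really is $2$-dimensional modulo lineality; this is immediate since the supports are disjoint nonempty proper subsets of $B$, but it should be said. Also note that the paper's phrase ``$F(L)^c$ is a connected matroid'' is to be read as the contraction $B/F(L)$ being connected (this is what the paper's running example uses, and what \cite{FS} intends), and you correctly use the contraction.

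What each approach buys: the \cite{FS} polytope argument gives the flacet description as a corollary of the facet description of the matroid polytope, which is useful if one also wants the polytope combinatorics; your argument is self-contained, avoids polytopes entirely, and sits naturally alongside Lemma~\ref{L:calLinitial} and the fine subdivision already used in this paper, so it would integrate well here. Either proof is acceptable; yours is arguably more in the spirit of Section~\ref{S:LinearSpace}.
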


In~\cite{FS}, the Bergman fan was studied as a subfan of the normal fan of the matroid polytope.
Its rays define facets, hence the term flacets.
By Lemma~\ref{L:rays}, we have the following.

\begin{corollary}
  When $B$ is connected, the phase limit set of $\calH^c_B$ is the union of the strata $\pls_\calL(\calH^c_B)$
  for flags  $\calL\colon\{0\}\subset L\subset V$, where $L$ is a flacet of $\calH_B$.
\end{corollary}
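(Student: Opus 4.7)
The plan is to apply Lemma~\ref{L:rays} to the tropical variety $\trop(\calH^c_B)=\trop(B)$ equipped with the Bergman fan structure $\Berg(B)$ (rather than with the fine subdivision used elsewhere), and then identify the resulting ray-strata with the strata $\pls_L(\calH^c_B)$ for flacets $L$. The phase limit set does not depend on the choice of fan structure on $\trop(B)$, so any convenient choice is permitted. Since $B$ is connected, the cited result~\cite[Proposition~2.6]{FS} identifies the rays of $\Berg(B)$ as precisely the rays $\rho_L\vcentcolon=\RR_{\geq 0}\cdot\omega_L$ indexed by the flacets $L$ of $\calH_B$. Lemma~\ref{L:rays} then gives immediately
\[
  \pls(\calH^c_B)\ =\ \bigcup_{L\text{ a flacet of }\calH_B} \pls_{\rho_L}(\calH^c_B)\,.
\]

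The remaining step is to verify the identification $\pls_{\rho_L}(\calH^c_B)=\pls_L(\calH^c_B)$. The weight $\omega_L$ takes only the values $1$ on $F(L)$ and $0$ on $F(L)^c$, so it is constant on each of the two sets $F_0\smallsetminus F_1=F(L)^c$ and $F_1\smallsetminus F_2=F(L)$ associated to the flag $\calL\colon\{0\}\subsetneq L\subsetneq V$, with the required strict inequality $\omega_L|_{F(L)}>\omega_L|_{F(L)^c}$ of~\eqref{Eq:omega}. Because $\rho_L$ is a ray of $\Berg(B)$, the matroid $B_{\omega_L}$ has no loops, so $\omega_L$ lies in the relative interior of the fine-subdivision cone $\sigma_\calL$. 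Lemma~\ref{L:calLinitial} now yields
\[
  \ini_{\omega_L}\calH^c_B\ =\ \calH(\calL)^c\ =\ \ini_L\calH^c_B\,,
\]
and therefore $\pls_{\rho_L}(\calH^c_B)=\overline{\coscrA(\ini_{\omega_L}\calH^c_B)}=\pls_L(\calH^c_B)$. Combining the two displays completes the proof.

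There is no substantive technical obstacle; the real content has already been invested in Lemma~\ref{L:rays}, Lemma~\ref{L:calLinitial}, and the cited identification of the rays of $\Berg(B)$ with flacets. The point worth emphasizing is the reduction from the fine subdivision, whose rays correspond to \emph{all} nonzero proper flats of $\calH_B$, to the Bergman fan, whose rays correspond only to flacets; this reduction is precisely the content of~\cite[Proposition~2.6]{FS}, and it is what turns the a priori union of strata over all nonzero proper flats into the sharper union over flacets claimed by the corollary.
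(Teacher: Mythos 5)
Your proof is correct and follows the same route as the paper, which proves this corollary simply by invoking Lemma~\ref{L:rays} together with the quoted result of Feichtner--Sturmfels identifying rays of the Bergman fan with flacets. You have filled in the bookkeeping the paper leaves implicit, namely that the stratum of the Bergman-fan ray spanned by $\omega_L$ agrees with the stratum $\pls_L(\calH^c_B)$ coming from the fine-subdivision cone $\sigma_{\calL}$ for the flag $\{0\}\subsetneq L\subsetneq V$; this is immediate since both cones have $\omega_L$ in their relative interior, so Lemma~\ref{L:calLinitial} gives the same initial scheme, exactly as you observe.
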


\section{The reduced discriminant}\label{S:discriminant}
We recall some fundamental properties of discriminants.
Much of this is found in~\cite{GKZ,Kapranov}.
Let $A\subset\ZZ^{m}$ be a multiset of $n$ vectors that span $\ZZ^{m}$ and lie in an affine hyperplane.
If $u$ is the primitive integer vector normal to that hyperplane, then $\langle u,a\rangle=1$ for all $a\in A$, as
$A$ spans $\ZZ^{m}$.
We adopt the notational convention that coordinates in $\ZZ^{n}$, $(\CC^\times)^{n}$, and $\CC^{n}$ are indexed by
elements of $A$, and we write $\ZZ^A$, $(\CC^\times)^A$, and $\CC^A$ for these spaces.

The surjective map of free abelian groups $\ZZ^A\to\ZZ^{m}$ defined by 
 \begin{equation}\label{Eq:lambdaA}
   \lambda_A\ \colon\ \ZZ^A\ni(c_a\mid a\in A)\ \longmapsto\ 
   \sum_{a\in A} c_a\cdot a\ \in\ \ZZ^{m}
 \end{equation}
has kernel a free abelian group of rank $\defcolor{d}\vcentcolon=n{-}m$.
Choose an isomorphism $\lambda_B$ between $\ZZ^d$ and this kernel.
This is given by $n$ vectors $\defcolor{B}=\{b_a\mid a\in A\}$ in the dual of $\ZZ^d$,
\[
    \lambda_B\ \colon\ \ZZ^d\ni y\ \longmapsto\ (\langle b_a,y\rangle\mid a\in A)\,.
\]
This collection $B$ is a \demph{Gale dual} to $A$.
Composing with $\lambda_A$, we obtain
\[
  \forall y\in\ZZ^d\qquad
  0\ =\  \sum_{a\in A} \langle b_a,y\rangle \cdot a\,.
\]
Pairing this with the primitive vector $u$ such that $\langle u,a\rangle=1$ for all $a\in A$ gives
\[
   0\ =\   \sum_{a\in A} \langle b_a,y\rangle \ =\ \Bigl\langle \sum_{a\in A}  b_a, y\Bigr\rangle\,,
\]
for all $y\in  \ZZ^d$, which implies that $0=\sum_{a\in A}b_a$.

\begin{example}\label{Ex:Reduced_Discriminant}
 ($m=1$)
 Let $A$ be the multisubset $\{1,\dotsc,1\}$ of $\ZZ$.
 The map $\lambda_A\colon\ZZ^{d+1}\to \ZZ$ is the sum of the coordinates,
 $\lambda_A(c_a\mid a\in A)=\sum_{a\in A} c_a$.
 A Gale dual $B$ to $A$ is given by the rows of the $(d{+}1)\times d$ matrix
 \begin{equation}\label{Eq:hyperplane}
  \left(\begin{array}{c} I_d\\-1\; \dotsb \; -1\end{array}\right)\ ,
 \end{equation}
 where $I_d$ is the $d\times d$ identity matrix.

 More interestingly, let $A$ be the columns of the matrix shown in Figure~\ref{F:running_Example_A}.
 \begin{figure}[htb]
   \centering
   $\left(\begin{array}{cccccc} 1&1&1&1&1&1\\0&0&1&4&2&3\\0&1&0&2&1&2\end{array}\right)$
   \qquad
   \raisebox{-29pt}{\begin{picture}(110,68)
     \put(0,0){\includegraphics{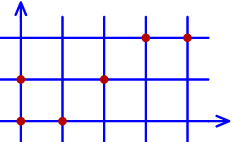}}
      \put(62,40){\small$6$}     \put(92,40){\small$4$}
      \put( 2,20){\small$2$}     \put( 52,20){\small$5$}
      \put( 2, 0){\small$1$}     \put( 32, 0){\small$3$}
   \end{picture}}
   \caption{Vectors in $A$ and their affine configuration.} \label{F:running_Example_A}
 \end{figure}
 This lies in the affine hyperplane where the first coordinate is 1.
 Figure~\ref{F:running_Example_A} shows the points of $A$ in that affine hyperplane.
 This is Gale dual to the configuration $B$ of Example~\ref{Ex:RunningI}.\hfill$\diamond$
\end{example}

An element $b\in\ZZ^d$ gives a group homomorphism $\CC^\times\to(\CC^\times)^d$ that sends $t\in\CC^\times$ to 
$\defcolor{t^b}\vcentcolon=(t^{b_1},\dotsc,t^{b_d})$.
Let $\varphi_B\colon(\CC^\times)^A\to(\CC^\times)^d$ be the group homomorphism defined by 
\[
   \defcolor{\varphi_B}(z_a\mid a\in A)\ \vcentcolon=\ \prod_{a\in A} z_a^{b_a}\ \in\ (\CC^\times)^d\,.
\]
An element $a\in\ZZ^{m}$ gives the monomial $\defcolor{x^a}\vcentcolon= x_1^{a_1}\dotsb x_m^{a_m}$, which is a character
of $(\CC^\times)^{m}$. 
The characters in $A$ give a group homomorphism $(\CC^\times)^{m}\to(\CC^\times)^A$ defined by 
 \begin{equation}\label{Eq:varphiA}
   \varphi_A\ \colon\ (\CC^\times)^{m}\ni x\ \longmapsto\ 
  (x^a\mid a\in A)\ \in\ (\CC^\times)^A\,.
 \end{equation}
These maps $\{1\}\to(\CC^\times)^{m}\xrightarrow{\varphi_A}(\CC^\times)^A\xrightarrow{\varphi_B}(\CC^\times)^d$ form an exact
sequence of groups which is obtained from the exact sequence 
$\ZZ^d\xrightarrow{\lambda_B}\ZZ^A\xrightarrow{\lambda_A}\ZZ^{m}\to\{0\}$ by applying $\Hom( \CC^\times,\bullet)$.

Let \defcolor{$Y_A$} be the image of $(\CC^\times)^{m}$ in $(\CC^\times)^A$ under $\varphi_A$.
The affine toric variety \defcolor{$X_A$} is its closure in $\CC^A$.
Writing \defcolor{$\check{\CC}^A$} for the dual space of hyperplanes in $\CC^A$, the \demph{dual variety}
$\defcolor{X^*_A}\subset\check{\CC}^A$ is the closure of the set of hyperplanes that contain a tangent space of $Y_A$. 
When $X^*_A$ is a hypersurface, its defining equation is the $A$-discriminant, and $A$ is \demph{nondefective}. 
There are several characterizations of nondefective configurations~\cite{CC07,Esterov,FI} whose equivalence is shown
explicitly in ~\cite{CD21}.
By~\cite[Thm.\ 18]{CC07}, the configuration $A$ is nondefective if and only if there is a complete flag of flats
$\emptyset\subsetneq F_1\subsetneq F_2\subsetneq\dotsb\subsetneq F_{d-1}\subsetneq B$ such that for all $j=1,\dotsc,d{-}1$
\begin{equation}\label{Eq:CC_char}
  \sum_{b\in F_j} b \quad\mbox{ does not lie in the span of $F_{j-1}$.}
\end{equation}
Call such a flag of flats in $B$ and its corresponding flag $\calH_B$ is a \demph{non-splitting flag}.
Observe that a non-splitting flag contains a flat of each possible rank $1,\dotsc,d{-}1$.
A flat $L$ of $\calH_B$ that lies in a non-splitting flag is a \demph{non-splitting flat}.
  
The group $(\CC^\times)^{m}$ acts on both $X_A$ and $X^*_A$ via the homomorphism $\varphi_A$.
While this has a single dense orbit in $X_A$, its orbits in $X^*_A$ are more interesting.
The closure in $\CC^d$ of the image \defcolor{$D_B$}  of $X^*_A\cap(\CC^\times)^A$ under the map $\varphi_B$ is a space of orbits of
$(\CC^\times)^{m}$ on $X^*_A$.
This is the \demph{reduced discriminant} of $A$.
When $A$ is nondefective, it is a hypersurface.

When $A$ is a pyramid, $X^*_A$ is contained in a coordinate hyperplane, $B$ contains a zero vector and $A$ is defective. 
We assume that $A$ is not a pyramid.
Let $\defcolor{Y^*_A} \subset X^*_A$ denote the dense open stratum consisting of all hyperplanes which do not contain
any coordinate axis, and which contain the tangent space of a point of $Y_A$.

The maps $\lambda_A\colon\ZZ^A\twoheadrightarrow\ZZ^{m}$ and $\lambda_B\colon\ZZ^d\hookrightarrow\ZZ^A$ of free abelian groups extend to
linear maps  $\lambda_A\colon\CC^A\twoheadrightarrow\CC^{m}$ and $\lambda_B\colon\CC^d\hookrightarrow\CC^A$, and this agrees with the
notation of Section~\ref{S:LinearSpace}.
Write $\defcolor{\calH_B}$ for the central hyperplane arrangement in $\CC^d$ obtained from pulling back the coordinate
hyperplanes of $\CC^A$ along $\lambda_B$.
Its hyperplanes are defined by the vectors in $B$.
Write \defcolor{$\calH_B^c$}  for the image $\lambda_B(\CC^d\smallsetminus\calH_B)\subset (\CC^\times)^A$ of its complement.

\begin{proposition}\label{P:RedDiscr}
  The dual variety $X^*_A$ is parameterized by $(\CC^\times)^{m}\times\calH^c_B$ via
 \begin{equation}\label{Eq:Param_Discr}
   (x,y)\ \longmapsto\ (x^a \cdot \langle b_a,y\rangle \mid a\in A)\,.
 \end{equation}
 The reduced discriminant $D_B$ is parameterized by the Horn--Kapranov uniformization $\psi$, which is the composition
 \begin{equation}\label{Eq:H-K}
   \psi\ \colon\ (\CC^d\setminus\calH_B)\ \xrightarrow{\ \lambda_B\ }\ (\CC^\times)^A\
     \xrightarrow{\ \varphi_B\ }\ (\CC^\times)^d\,.
 \end{equation}
 That is, the image of the Horn--Kapranov parameterization is a dense subset of $D_B$.
 It has dimension at most $d-1$.  
\end{proposition}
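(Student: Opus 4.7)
The plan is to handle the three assertions in order, beginning with a direct tangent-space computation for (1). I would differentiate one-parameter subgroups through $\varphi_A$ to see that the tangent space to $Y_A$ at $\varphi_A(x) = (x^a \mid a \in A)$ is spanned by $\{(\langle v, a \rangle x^a \mid a \in A) : v \in \CC^m\}$. A linear hyperplane with coefficients $(c_a \mid a \in A)$ contains this tangent space if and only if $\sum_{a \in A} c_a x^a \cdot a = 0$ in $\CC^m$, i.e., $(c_a x^a)_{a\in A} \in \ker(\lambda_A)$. By exactness of $\CC^d \xrightarrow{\lambda_B} \CC^A \xrightarrow{\lambda_A} \CC^m$, this holds iff $c_a x^a = \langle b_a, y \rangle$ for some $y \in \CC^d$, giving $c_a = x^{-a}\langle b_a, y \rangle$; up to the automorphism $x \mapsto x^{-1}$ of $(\CC^\times)^m$, this is exactly formula~(\ref{Eq:Param_Discr}). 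The dense open stratum $Y^*_A$ consists of such hyperplanes having no vanishing coordinate, which forces $\langle b_a, y \rangle \neq 0$ for all $a \in A$, i.e.\ $y \in \CC^d \smallsetminus \calH_B$, so $(\langle b_a, y\rangle \mid a \in A) \in \calH^c_B$. Since $Y^*_A$ is dense in $X^*_A$, this yields a dense parameterization.

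For (2), compose the parameterization from (1) with $\varphi_B$:
\[
  \varphi_B\bigl(x^a \langle b_a, y \rangle \mid a \in A\bigr) \ = \ \prod_{a\in A}(x^a)^{b_a} \cdot \prod_{a\in A}\langle b_a, y\rangle^{b_a}\,.
\]
The first factor equals $\varphi_B(\varphi_A(x)) = 1$ by exactness of the group sequence $(\CC^\times)^m \xrightarrow{\varphi_A} (\CC^\times)^A \xrightarrow{\varphi_B} (\CC^\times)^d$ recorded just before the proposition, while the second factor is $\psi(y)$ by the definition~(\ref{Eq:H-K}). Since $D_B$ is by definition the closure of $\varphi_B(X^*_A \cap (\CC^\times)^A)$ and this image coincides with $\psi(\CC^d \smallsetminus \calH_B)$, the Horn--Kapranov uniformization has dense image in $D_B$.

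For (3), the identity $\sum_{a\in A} b_a = 0$ derived just before Example~\ref{Ex:Reduced_Discriminant} yields
\[
  \psi(\lambda y) \ = \ \prod_{a \in A}(\lambda\langle b_a, y\rangle)^{b_a} \ = \ \lambda^{\sum_a b_a}\,\psi(y) \ = \ \psi(y)
\]
for every $\lambda \in \CC^\times$ and $y \in \CC^d \smallsetminus \calH_B$, so $\psi$ factors through $\PP(\CC^d) = \PP^{d-1}$, and its image has dimension at most $d-1$. The main obstacle is essentially bookkeeping: verifying that the tangency condition extracted in (1) cuts out precisely the dense open stratum $Y^*_A$ of $X^*_A$ rather than a smaller set, and tracking the inversion $x \mapsto x^{-1}$ carefully. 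Both points are routine once one observes that $(\CC^\times)^m$ acts transitively on $Y_A$ via $\varphi_A$, so that the collection of hyperplanes tangent to $Y_A$ at some point forms a single $(\CC^\times)^m$-orbit in $\check{\CC}^A$.
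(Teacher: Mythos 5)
Your argument is correct and follows essentially the same route as the paper: compute the space of hyperplanes containing $T_{\varphi_A(x)}Y_A$ via $d\varphi_A$ and the exactness of $\CC^d\xrightarrow{\lambda_B}\CC^A\xrightarrow{\lambda_A}\CC^m$, then push forward along $\varphi_B$ (killing the $(\CC^\times)^m$-factor by exactness of the torus sequence), and finally invoke $\sum_a b_a=0$ for homogeneity. The only cosmetic difference is that the paper linearizes at the identity $\bOne\in Y_A$ and then propagates by the $(\CC^\times)^m$-equivariance of the incidence variety $\Gamma$, which is why it lands directly on $(x^a\langle b_a,y\rangle)_a$ rather than $(x^{-a}\langle b_a,y\rangle)_a$; you compute at a general point $\varphi_A(x)$ and absorb the resulting inversion with the automorphism $x\mapsto x^{-1}$, which is a perfectly valid bookkeeping fix and yields the same parameterized set. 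One small imprecision: the phrase ``this image coincides with $\psi(\CC^d\smallsetminus\calH_B)$'' should really be ``$\psi(\CC^d\smallsetminus\calH_B)=\varphi_B(Y^*_A\cap(\CC^\times)^A)$ is dense in $\varphi_B(X^*_A\cap(\CC^\times)^A)$'', since $X^*_A$ is a closure and can meet the torus outside the parameterized stratum $Y^*_A$; but since only density in $D_B$ is needed, the conclusion is unaffected.
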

\begin{proof}
 We derive the parameterization~\eqref{Eq:Param_Discr} of $X^*_A$.
 The incidence variety
\[
   \defcolor{\Gamma}\ \vcentcolon=\ \{ (z,H)\in Y_A\times\check{\CC}^A\mid T_z Y_A \subset H\}\,,
\]
 has projections to $Y_A$ and $X^*_A$.
 The torus $(\CC^\times)^{m}$ acts on $\check{\CC}^A$, on $X^*_A$, and on $\Gamma$.

 Let $\defcolor{\bOne}\in(\CC^\times)^A$ be the identity of the torus.
 Thus $\Gamma$ is determined by its fiber \defcolor{$\Gamma_{\bOne}$} over ${\bOne}\in Y_A$, which is
 the set of hyperplanes containing the tangent space $T_{\bOne} Y_A$.
 Differentiating~\eqref{Eq:varphiA} shows that this tangent space is the image
 of $\CC^{m}$ under the  linear map
\[
   d_{\bOne}\varphi_A\ \colon\ 
    \CC^{m}\ni x\ \longmapsto\ (\langle x,a\rangle \mid a\in A)\,.
\]
 A hyperplane (given by the coefficients $(c_a\mid a\in A)$ of its defining linear form) lies in $\Gamma_{\bOne}$ if and
 only if its pullback to $\CC^{m}$ along this map $d_{\bOne}\varphi_A$ vanishes.
 This pullback is simply the map $\lambda_A$~\eqref{Eq:lambdaA}, and so the space of linear forms vanishing on 
 $T_{\bOne} X_A$ is $\lambda_B(\CC^d)$.
 Thus
\[
   \Gamma\ =\ \{ (\varphi_A(x), \varphi_A(x)\cdot\lambda_B(y))\mid
                x\in (\CC^\times)^{m}\ \mbox{ and }\ y\in\CC^d\}\,.
\]
 Projecting to the second coordinate parameterizes $X^*_A$, and gives~\eqref{Eq:Param_Discr}.

 To find the the image of $X^*_A$ under $\varphi_B$, we apply $\varphi_B$ to the parameterization~\eqref{Eq:Param_Discr}, obtaining
 \begin{equation}\label{Eq:imageRedDiscr}
  \prod_{a\in A} (x^a\cdot\langle b_a,y\rangle)^{b_a}\ =\ 
  \prod_{a\in A} (x^a)^{b_a}\cdot \prod_{a\in A} \langle b_a,y\rangle^{b_a}\ =\ 
  \prod_{a\in A} \langle b_a,y\rangle^{b_a}\,,
 \end{equation}
 as $\bOne=\prod_{a\in A} (x^a)^{b_a}$.
 This is because it is the image of $(\CC^\times)^{m}$ in $(\CC^\times)^d$ in the exact sequence
 $(\CC^\times)^{m}\xrightarrow{\,\varphi_A\,}(\CC^\times)^A\xrightarrow{\,\varphi_B\,}(\CC^\times)^d$.

 As $\sum_a b_a=0$, the map $\varphi_B$ is homogeneous of degree zero, so that for
 $x\in\lambda_B(\CC^d)\cap(\CC^\times)^A$ and 
 $t\in\CC^\times$, we have $t\cdot x\in\lambda_B(\CC^d)\cap(\CC^\times)^A$ and $\varphi_B(x)=\varphi_B(t\cdot x)$.
 Thus the image $\varphi_B(\lambda_B(\CC^d)\cap(\CC^\times)^A)$ has dimension at most $d{-}1$.
\end{proof}

\begin{remark}\label{R:translation}
 By~\eqref{Eq:Param_Discr}, $X^*_A$ is the closure of all translations of $\calH^c_B$ by the torus
 $(\CC^\times)^{m}$ acting on $(\CC^\times)^A$ through the map $\varphi_A$.
 By~\eqref{Eq:H-K}, the reduced discriminant $D_B$ is the image of the linear space
 $\lambda_B(V{\smallsetminus} \calH_B) = \calH^c_B$ under the map $\varphi_B$.
 \hfill$\diamond$
\end{remark}

\begin{example}\label{Ex:hypersurface}
 We consider the reduced discriminants from Example~\ref{Ex:Reduced_Discriminant}.
 When $A=\{1,\dotsc,1\}$ and $B$ consists of the rows of the matrix~\eqref{Eq:hyperplane}, the map  $\psi$ (where defined) is
 \[
  \psi\ \colon\ (y_1,\dotsc,y_d)\ \longmapsto\
  \left( -\frac{y_1}{y_1+\dotsb+y_d}\,,\, \dotsc\,,\,  -\frac{y_d}{y_1+\dotsb+y_d} \right)\ \in\ (\CC^\times)^d\, 
 \]
 The image is (a dense subset of) the hyperplane $x_1+\dotsb+x_d+1=0$.
 Thus the line with coamoeba~\eqref{Eq:two_triangles} and the plane of Example~\ref{Ex:plane} are reduced
 discriminants.

 For the configuration from Example~\ref{Ex:RunningI}, $\psi(x,y,z)$ is
 \[
 \left(\ \frac{x(x+2y)}{(-2x-y-2z)^2}\,,\,
        \frac{y(x+2y)^2}{(-2x-y-2z)(-2y+z)^2}\,,\,
        \frac{z(-2y+z)}{(-2x-y-2z)^2}\ \right)\,.
 \]
  This is the hypersurface in $\CC^3$ defined by
  \begin{equation}    \label{Eq:bigDiscrim}
  \begin{gathered}
 %
    {-}432p^6{-}1024p^5q^2{-}1152p^5q{+}864p^5r{+}216p^5{+}1280p^4q^2r{+}768p^4q^2{+}1584p^4qr\\
    {+}832p^4q{-}432p^4r^2{+}216p^4r{-}27p^4{-}40p^3q^2r^2{+}512p^3q^2r{-}192p^3q^2{+}1584p^3qr^2{+}532p^3qr\\
   {-}200p^3q{+}4000p^2q^3r^2{-}40p^2q^2r^3{+}5038p^2q^2r^2{-}208p^2q^2r{+}16p^2q^2{-}1152p^2qr^3{+}832p^2qr^2\\
   {-}200p^2qr{+}16p^2q{+}500pq^3r^3{-}200pq^3r^2{+}1280pq^2r^4{+}512pq^2r^3{-}208pq^2r^2\\
   {+}\Blue{3125q^4r^4{+}4000q^3r^4{-}200q^3r^3{+}16q^3r^2{-}1024q^2r^5{+}768q^2r^4{-}192q^2r^3
   {+}16q^2r^2}\makebox[2pt][l]{\mbox{\hspace{40pt}$\diamond$}} 
 \end{gathered}
  \end{equation}
\end{example}

 A hypersurface $Y=\calV(f)\subset(\CC^\times)^d$ has a map  \defcolor{$\gamma$} to $\PP^{d-1}$ called the 
 \demph{logarithmic Gauss map},
\[
    Y\ni y\ \longmapsto\  \gamma(y)\ \vcentcolon=\ 
    \Bigl[ y_1 \tfrac{\partial f}{\partial y_1}\,,\,
           y_2 \tfrac{\partial f}{\partial y_2}\,,\, \dotsc\,,\,
           y_d \tfrac{\partial f}{\partial y_d} \Bigr]\ 
     \in\ \PP^{d-1}\,.
\]
 Observe that as  $\sum_a b_a = 0$, the map $\varphi_B$ defines a rational map from the projective space
 $\defcolor{\PP^A}\vcentcolon=\PP(\CC^A)$ to $\CC^d$. 
 As $\lambda_B$ defines a linear injection $\PP^{d-1}\hookrightarrow\PP^A$, the Horn-Kapranov
 parameterization $\psi$ is also a rational map defined on $\PP^{d-1}$.

\begin{proposition}[Kapranov~\cite{Kapranov}]\label{P:Kapranov}
 When $A$ is nondefective, the map $\psi\colon\PP^{d-1}\,-\to\CC^d$ parameterizing the reduced discriminant $D_B$ is
 birational with inverse given by the logarithmic Gauss map.
 This property---that the logarithmic Gauss map is birational---characterizes reduced discriminants of nondefective
 configurations.
\end{proposition}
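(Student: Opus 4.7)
The plan is to verify that $\psi$ has the logarithmic Gauss map $\gamma$ as a rational left inverse, establishing birationality since $\psi$ dominates the hypersurface $D_B$ of the matching dimension $d{-}1$. The main work is a calculation of $\gamma\circ\psi$; the converse direction will require substantially more effort.

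First I would compute $\gamma\circ\psi$. Let $f$ be a defining equation of $D_B$ and write $\psi(y) = (w_1,\dotsc,w_d)$ with $w_j = \prod_{a\in A} \langle b_a,y\rangle^{b_a^{(j)}}$. Since $f(\psi(y))\equiv 0$, the chain rule applied to $\partial/\partial y_k$ gives
\[
  \sum_{j=1}^d w_j f_{w_j}(\psi(y))\,\sum_{a\in A}\frac{b_a^{(j)} b_a^{(k)}}{\langle b_a,y\rangle}\ =\ 0
\]
for each $k=1,\dotsc,d$. Setting $c\vcentcolon=(w_1 f_{w_1}(\psi(y)),\dotsc,w_d f_{w_d}(\psi(y)))$ and $g_a\vcentcolon=\langle b_a,c\rangle/\langle b_a,y\rangle$, this reads $\sum_{a\in A} g_a\, b_a = 0$. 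The exact sequence $\CC^m\xrightarrow{\lambda_A^T}\CC^A\xrightarrow{\lambda_B^T}\CC^d\to 0$ dual to the one preceding~\eqref{Eq:varphiA} then forces $g_a = \langle x,a\rangle$ for some $x\in\CC^m$, i.e.\
$\langle b_a,c\rangle = \langle x,a\rangle\,\langle b_a,y\rangle$ for every $a\in A$.

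Next I would note that taking $c=y$ and $x=u$ (the primitive normal with $\langle u,a\rangle=1$ for all $a\in A$) satisfies this system, so $[y]$ lies in the projectivized solution space. When $A$ is nondefective, $D_B$ is an irreducible hypersurface so its conormal line at a smooth image point is one-dimensional, forcing $c$ proportional to $y$ and hence $\gamma(\psi(y))=[y]$ in $\PP^{d-1}$. Thus $\psi$ is generically injective, and since it dominates $D_B$ of dimension $d{-}1=\dim \PP^{d-1}$, it is birational with inverse $\gamma$.

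The harder direction will be the characterization: given a hypersurface $Y=\calV(f)\subset(\CC^\times)^d$ whose logarithmic Gauss map $\gamma\colon Y\dashrightarrow\PP^{d-1}$ is birational, one must reconstruct a nondefective configuration $A$ with $D_B=Y$. Following Kapranov's strategy, I would study $\psi\vcentcolon=\gamma^{-1}\colon\PP^{d-1}\dashrightarrow(\CC^\times)^d$ via its pullback to a log-resolution: the identity $\gamma\circ\psi=\mathrm{id}$ is a first-order differential condition on $\psi$ that, together with the homogeneity forced by the target being $\PP^{d-1}$, constrains each coordinate $w_j\circ\psi$ to be a Laurent monomial in a finite collection of linear forms on $\CC^d$. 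Collecting the exponents row-wise yields a matrix whose columns $\{b_a\}$ are the rows of a Gale dual $B$ to a configuration $A$, and the relation $\sum_a b_a =0$ coming from homogeneity places $A$ in an affine hyperplane. That $A$ is nondefective follows because its reduced discriminant equals $Y$, which is a hypersurface by assumption. The principal obstacle is the precise argument that $\psi$'s components are forced into the pure monomial-in-linear-forms shape; this is where Kapranov's logarithmic techniques and the interpretation of $\gamma^{-1}$ as a Horn uniformization are decisive.
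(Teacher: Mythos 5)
The paper does not prove Proposition~\ref{P:Kapranov}; it cites Kapranov~\cite{Kapranov} and uses the statement as a black box, so there is no in-paper argument to compare against. Judging your sketch on its own merits: the computation that $\gamma\circ\psi=\mathrm{id}$ is correct in substance and is the forward half of Kapranov's theorem. Differentiating $f\circ\psi\equiv 0$ gives $c^{T}J=0$ where $J_{jk}=\sum_a b_a^{(j)}b_a^{(k)}/\langle b_a,y\rangle$ is the logarithmic Jacobian of $\psi$ and $c=(w_jf_{w_j}(\psi(y)))_j$; your rewriting via $g_a$ and the Gale-dual exact sequence is an equivalent bookkeeping of this relation. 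Since $\sum_a b_a=0$ we also have $y^{T}J=0$, so $c$ and $y$ both lie in the left kernel of $J$, which is the logarithmic conormal space of $D_B$ at $\psi(y)$; nondefectiveness makes $D_B$ a hypersurface, hence $J$ has rank $d-1$ generically, the left kernel is a line, and $c\propto y$. I would tighten your final step: state explicitly that the solution set of the linear system you derived \emph{is} the left kernel of $J$, i.e.\ the conormal space, so that "the conormal line is one-dimensional" actually forces $c\propto y$. You could also shorten the argument by noting $J$ is symmetric, so left and right kernels coincide and $Jy=0$ is immediate from $\sum_a b_a=0$. These are precisions, not errors.

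The converse---that a hypersurface whose logarithmic Gauss map is birational must be a reduced discriminant of a nondefective configuration---is only a sketch, and the decisive step, showing that birationality of $\gamma$ forces each coordinate of $\gamma^{-1}$ to be a Laurent monomial in linear forms, is not carried out. You correctly flag this as the "principal obstacle," but as written it is a genuine gap: nothing in the proposal explains why the homogeneity and the first-order condition $\gamma\circ\psi=\mathrm{id}$ eliminate all non-Horn parametrizations. Since the paper defers to~\cite{Kapranov} for this proposition, citing Kapranov is the expected answer; if you intend a self-contained proof, this is the step that requires a real argument.
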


\begin{remark}\label{R:parallel}
  The \demph{parallel-equivalence class} of $b\in B$ is
  $\defcolor{[b]}\vcentcolon=\{ c\in B\mid \mbox{$c$ is parallel to $b$}\}$.
  Let \defcolor{$\eta\in\ZZ^d$} be a primitive vector parallel to $b$.
  For each $c\in [b]$, let \defcolor{$q_c$} be the nonzero integer such that $c=q_c\cdot \eta$.
  For $y\in (\CC^d\smallsetminus \calH_B)$, consider the contribution of vectors parallel to $b$ to the Horn-Kapranov
  parameterization~\eqref{Eq:H-K},
 \[
     \prod_{c\in [b]} \langle c,y\rangle^{c}\ =\ 
     \prod_{c\in [b]} (q_c\cdot \langle \eta,y\rangle)^{q_c\cdot \eta}\ =\ 
     \Bigl(\prod_{c\in [b]} q_c^{q_c}\Bigr)^\eta\;\cdot\;  \langle \eta,y\rangle^{\eta\,\cdot \sum q_c}\,.
 \]
  If $0=\sum q_c$, so that the sum of the vectors parallel to $b$ is $0$, then 
  this contribution is the element $(\prod_{c\in [b]} q_c^{q_c})^{\eta}\in (\ZZ^\times)^d$.
  Otherwise, it is a nonconstant function of $y$.\hfill$\diamond$
\end{remark}

Set $\defcolor{\TT_A}\vcentcolon=\varphi_A(\TT^{m})\subset\TT^A$.
We describe the closed coamoebas  of  $X^*_A$ and $D_B$.

\begin{corollary}\label{Co:Coamoebae}
 The closed coamoeba of the dual variety $X^*_A$ is  
\[
   \overline{\coscrA(X^*_A\cap(\CC^\times)^A)}\ =\ \TT_A\,\cdot\, \overline{\coscrA(\calH^c_B)}\,,
\]
and we have $ \overline{\coscrA(D_B)} = \varphi_B(\overline{\coscrA(\calH^c_B)}$ and
$\scrP^\infty(D_B) \subset \varphi_B(\scrP^\infty(\calH^c_B))$.
\end{corollary}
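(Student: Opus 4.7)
The plan is to derive all three assertions from Remark~\ref{R:translation}, which gives $X^*_A\cap(\CC^\times)^A = \varphi_A((\CC^\times)^m)\cdot \calH^c_B$ and $\overline{\varphi_B(\calH^c_B)}=D_B$, combined with the compactness of the tori involved, which forces images of closed sets to be closed and ensures that torus actions commute with closure.

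For the first equation, I would apply $\Arg$ to the description $X^*_A\cap(\CC^\times)^A=\varphi_A((\CC^\times)^m)\cdot\calH^c_B$. Since $\Arg$ intertwines the multiplicative action with the additive action on $\TT^A$ and since $\TT_A=\Arg(\varphi_A((\CC^\times)^m))$ by definition, this yields $\coscrA(X^*_A\cap(\CC^\times)^A)=\TT_A\cdot\coscrA(\calH^c_B)$. The remaining point is that $\overline{\TT_A\cdot\coscrA(\calH^c_B)}=\TT_A\cdot\overline{\coscrA(\calH^c_B)}$: compactness of $\TT_A$ makes the action $\TT_A\times\TT^A\to\TT^A$ a closed map, so $\TT_A\cdot\overline{\coscrA(\calH^c_B)}$ is already closed, while continuity of the action shows it is contained in $\overline{\TT_A\cdot\coscrA(\calH^c_B)}$.

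For the second equality, the inclusion $\varphi_B(\coscrA(\calH^c_B))\subset\coscrA(D_B)$ is immediate from $\varphi_B(\calH^c_B)\subset D_B$. Taking closure inside $\TT^d$ yields $\varphi_B(\overline{\coscrA(\calH^c_B)})\subset\overline{\coscrA(D_B)}$, and the left-hand side is already closed because $\varphi_B\colon\TT^A\to\TT^d$ is continuous and $\TT^A$ is compact. The reverse inclusion is exactly the content of Proposition~\ref{P:easyTropicalFunctoriality}(\ref{easyTropicalFunctoriality.3}), which gives $\coscrA(D_B)\subset\varphi_B(\overline{\coscrA(\calH^c_B)})$; since the right-hand side is closed, taking the closure on the left preserves the inclusion. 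The third statement $\pls(D_B)\subset\varphi_B(\pls(\calH^c_B))$ follows directly from the proof of Proposition~\ref{P:easyTropicalFunctoriality}(\ref{easyTropicalFunctoriality.4}): any accumulation point $\theta$ of $\Arg(y_i)$ for an unbounded sequence $\{y_i\}\subset D_B$ is the image under $\varphi_B$ of an accumulation point of $\Arg(x_i)$ for some unbounded sequence $\{x_i\}\subset\calH^c_B$ constructed via the density of $\varphi_B(\calH^c_B)$ in $D_B$.

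There is no serious obstacle once the right setup is in place; the entire argument is a bookkeeping exercise that leverages Proposition~\ref{P:easyTropicalFunctoriality} and the fact that every map of compact tori is closed, so that images and closures interchange freely. The only care needed is in the first statement, where one must verify that the $\TT_A$-orbit of a closed set is closed; this is precisely where compactness of $\TT_A$ is used, and it is what allows the passage from the coamoeba to its closure.
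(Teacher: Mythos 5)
Your proof is correct and follows essentially the same route as the paper, applying Remark~\ref{R:translation} for the first equality and Proposition~\ref{P:easyTropicalFunctoriality}(3)--(4) for the other two; you have merely spelled out the compactness/closedness bookkeeping that the paper leaves implicit. One small imprecision: Remark~\ref{R:translation} says $X^*_A$ is the \emph{closure} of the $(\CC^\times)^m$-translates of $\calH^c_B$, so $\varphi_A((\CC^\times)^m)\cdot\calH^c_B$ is only a dense subset of $X^*_A\cap(\CC^\times)^A$ rather than equal to it, but density is all you need since both sets then have the same closed coamoeba.
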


\begin{proof}
 By Remark~\ref{R:translation}, $X^*_A$ is the closure of $\varphi_A(\CC^\times)^{m}\cdot\calH_B^c\subset(\CC^\times)^A$.
 Thus the same holds for their coamoebas and closed coamoebas, which is the first statement.
 Proposition~\ref{P:easyTropicalFunctoriality}\eqref{easyTropicalFunctoriality.3} and~\eqref{easyTropicalFunctoriality.4}
 applied to the map $\varphi_B\colon X^*_A\to D_B$ implies the remaining statements.
\end{proof}

\begin{corollary}\label{Co:dim_Discr_Coamoeba}
  Suppose that $A$ is non-defective.
  When $d=1$ the reduced discriminant and its coamoeba are points.
  For $d>1$, the coamoeba discriminant has dimension $d$ in $\TT^d$.
\end{corollary}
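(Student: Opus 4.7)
The plan is to split into the two cases.

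For $d=1$, I would show $D_B$ is a single point directly from the Horn–Kapranov formula. The kernel of $\lambda_A$ has rank $1$, so $B$ consists of nonzero integers satisfying $\sum_{a\in A} b_a=0$ (as noted in the discussion preceding Example~\ref{Ex:Reduced_Discriminant}). For $y\in\CC\smallsetminus\calH_B$, the parameterization~\eqref{Eq:imageRedDiscr} gives
\[
  \psi(y)\ =\ \prod_{a\in A}\langle b_a,y\rangle^{b_a}
  \ =\ \Bigl(\prod_{a\in A} b_a^{b_a}\Bigr)\cdot y^{\sum_a b_a}
  \ =\ \prod_{a\in A} b_a^{b_a}\,,
\]
a constant in $\CC^\times$, so $D_B$ is a point and $\coscrA(D_B)\subset\TT$ is a point.

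For $d>1$, I would use Proposition~\ref{P:dimension} to reduce the problem to showing that the amoeba $\scrA(D_B)\subset\RR^d$ has dimension $d$. Since $A$ is non-defective, $D_B=\calV(f)$ is a hypersurface; at a smooth point $y\in D_B\cap(\CC^\times)^d$ the tangent space is $\{v\in\CC^d\mid \sum_i \partial_i f(y)\cdot v_i=0\}$. Substituting $u_i=v_i/y_i$ and $\alpha_i=y_i\partial_i f(y)$ (so that $[\alpha]=\gamma(y)\in\PP^{d-1}$), the tangent space becomes $\alpha^\perp\subset\CC^d$, and $d_y\Log(v)=\mathrm{Re}(u)$. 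So the amoeba has full dimension $d$ at $y$ as soon as $\mathrm{Re}(\alpha^\perp)=\RR^d$.

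The key linear algebra step is the claim: if $\alpha=a+ib$ with $a,b\in\RR^d$ linearly independent over $\RR$, then $\mathrm{Re}(\alpha^\perp)=\RR^d$. Given $x\in\RR^d$, the equation $\langle\alpha,x+iy\rangle=0$ (using the $\CC$-bilinear pairing) splits into $\langle a,x\rangle=\langle b,y\rangle$ and $\langle a,y\rangle=-\langle b,x\rangle$. Since $a,b$ are linearly independent and $d\ge2$, the real linear map $y\mapsto(\langle b,y\rangle,\langle a,y\rangle)$ from $\RR^d$ to $\RR^2$ is surjective, so a solution $y$ exists.

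Finally, I would invoke Proposition~\ref{P:Kapranov}: when $A$ is non-defective, the logarithmic Gauss map $\gamma\colon D_B\dashrightarrow\PP^{d-1}$ is birational, hence dominant. The condition that $\alpha$ has $\RR$-dependent real and imaginary parts is precisely $\gamma(y)\in\PP^{d-1}(\RR)$, which cuts out a proper real subvariety of $D_B$. Thus a generic smooth $y\in D_B\cap(\CC^\times)^d$ satisfies the hypothesis of the previous paragraph, giving $d_y\Log$ surjective, so $\scrA(D_B)$ and hence $\coscrA(D_B)$ have dimension $d$. The main obstacle is the linear-algebra identification in the third paragraph---one must carefully track the distinction between the $\CC$-bilinear pairing and the Hermitian one, and verify that the rank-$2$ real system is consistent precisely when $\gamma(y)$ is non-real, which is where the birationality of $\gamma$ becomes essential.
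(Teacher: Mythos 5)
Your proof is correct, and it takes a genuinely different route from the paper on both points. For $d=1$, the paper observes that $D_B$ is an irreducible hypersurface in $\PP^1$, hence a point; you compute directly from the Horn--Kapranov formula, using $\sum_a b_a = 0$, that $\psi$ is constant. Both are valid; yours is more explicit, theirs shorter. For $d>1$, the paper cites the theorem of Draisma--Rau--Yuen (\cite{DRY}) to reduce to the dichotomy ``full-dimensional amoeba or binomial defining equation,'' and then rules out the binomial case by noting that a binomial has constant logarithmic Gauss map, contradicting Proposition~\ref{P:Kapranov}. You instead re-derive the needed half of that dichotomy from scratch: you compute that the image of $d_y\Log|_{T_y D_B}$ is $\mathrm{Re}(\alpha^\perp)$ where $[\alpha]=\gamma(y)$, show by elementary linear algebra that this is all of $\RR^d$ whenever $\gamma(y)\notin\PP^{d-1}(\RR)$, and then use birationality of $\gamma$ (Proposition~\ref{P:Kapranov} again) to locate such a point. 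This makes the argument self-contained at the cost of a few extra lines; the paper's version is shorter by outsourcing the analytic content to \cite{DRY}. Both proofs ultimately pivot on Proposition~\ref{P:Kapranov}, just deploying it to slightly different ends. One small thing you should tighten: the assertion that ``$\gamma(y)\in\PP^{d-1}(\RR)$ cuts out a proper real subvariety'' deserves a one-line justification --- e.g., since $\gamma$ is dominant and holomorphic, if its image were contained in the totally real locus $\PP^{d-1}(\RR)$ then its complex-linear differential would vanish identically (the image of $d\gamma_y$ would lie in $T\PP^{d-1}(\RR)\cap J\,T\PP^{d-1}(\RR)=0$), forcing $\gamma$ to be constant and contradicting dominance for $d>1$.
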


\begin{proof}
  As $A$ is non-defective,  $D_B$ is an irreducible hypersurface in $(\CC^\times)^d$.
  Irreducible hypersurfaces in $\PP^1$ are points, which proves the first statement.
  Suppose that $d>1$.
  Either both the amoeba and coamoeba of $D_B$ are full-dimensional, or the defining equation of $D_B$ is an irreducible binomial,
  $y^a-cy^b=0$.
  (This is a consequence of the theorem on the dimension of an amoeba in~\cite{DRY}.)
  Suppose that $D_B$ is defined by $y^a-cy^b$.
  In $(\CC^\times)^d$ this becomes $y^\alpha=c$, where $\alpha=a-b$.
  Since $y_i\frac{\partial}{\partial y_i}y^\alpha=\alpha_i y^\alpha=c\alpha_i$, the logarithmic Gauss map on $D_B$
  is constant, and therefore $D_B$ is not a reduced discriminant when $d>1$, by Proposition~\ref{P:Kapranov}. 
\end{proof}
%

%
\section{Phase limit set of discriminants}\label{S:discriminantAmoeba}

Let $D_B$ be  a reduced discriminant arising from a nondefective configuration
$A\subset\ZZ^m$ with Gale dual $B\subset\ZZ^d$.
When $d\geq 2$ the discriminant coamoeba $\coscrA(D_B)$ has full dimension $d$, by Corollary~\ref{Co:dim_Discr_Coamoeba}.
When $d=2$, this coamoeba and its phase limit set were described and shown to be essentially
polyhedral objects~\cite{NP,PS}.
This is explained in Example~\ref{Ex:facial_discriminants}.

By Corollary~\ref{Co:Coamoebae}, we have $\pls(D_B) = \varphi_B(\pls(\calH^c_B))$.
Theorem~\ref{Th:Linear} and its Corollary~\ref{C:flagedDecomposition} describe the components of $\pls(\calH_B^c)$; we
study their images under $\varphi_B$ to understand $\pls(D_B)$.
Our main results are Theorems~\ref{Th:conjecture},~\ref{Th:main_components}, and~\ref{Thm:PLS_Discriminant}.

We assume that $d>2$.
By Theorem~\ref{Th:main_components}, the phase limit set $\pls(D_B)$ has $d$-dimensional strata that are prisms over
discriminant coamoebas corresponding to non-splitting flats of $\calH_B$.
In a different vein, Theorem~\ref{Thm:PLS_Discriminant}  identifies  the
$d$-dimensional components of the phase limit set $\pls(D_B)$ as the union of strata
$\pls_L(D_B)$ corresponding to certain essential flacets (see below) $L$ of $\calH_B$.
We do not know how to relate non-splitting flats and essential flacets, except when $d=3$, which gives Theorem~\ref{Th:conjecture}.
These results generalize what we observed for the plane in $\CC^3$ in Example~\ref{Ex:plane}.
That example and some computations suggest the following conjecture, which is an analog of the solidity of
discriminant amoebas~\cite{PST}.

\begin{conjecture}\label{Conj:Main}
  Let $A\subset\ZZ^m$ be nondefective with Gale dual $B\subset\ZZ^d$.
   When $d>2$, the closure of the discriminant coamoeba $\coscrA(D_B)$ equals the phase limit set of $D_B$. 
\end{conjecture}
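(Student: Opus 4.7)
The plan is to prove the equivalent statement $\coscrA(D_B)\subset\scrP^\infty(D_B)$; combined with the containment $\scrP^\infty(D_B)\subset\overline{\coscrA(D_B)}$ that holds by definition of the phase limit set, this yields $\overline{\coscrA(D_B)}=\scrP^\infty(D_B)$. Using the Horn--Kapranov parameterization $\psi\colon\CC^d\smallsetminus\calH_B\to D_B$ of Proposition~\ref{P:RedDiscr} together with the homogeneity $\psi(ty)=\psi(y)$, the task becomes: for every $y_0\in\PP^{d-1}\smallsetminus[\calH_B]$, produce a sequence $\{y_i\}\subset\PP^{d-1}\smallsetminus[\calH_B]$ such that $y_i$ tends to $[\calH_B]$ (forcing $\psi(y_i)$ to escape every compact subset of $(\CC^\times)^d$) while $\Arg(\psi(y_i))\to\Arg(\psi(y_0))$.

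First I would examine the real-analytic fiber of $\Arg\circ\psi$ through $y_0$ inside $\PP^{d-1}\smallsetminus[\calH_B]$. Corollary~\ref{Co:dim_Discr_Coamoeba} says the discriminant coamoeba has full dimension $d$ in $\TT^d$, while $\PP^{d-1}$ has real dimension $2d-2$, so a generic such fiber has real dimension at least $d-2\geq 1$. The crucial geometric step is to show that the closure of this fiber in $\PP^{d-1}$ must meet $[\calH_B]$: if so, a path along the fiber terminating on $[\calH_B]$ produces the required sequence. To construct such a path, I would pick a non-splitting flat $L$ of $\calH_B$; the condition~\eqref{Eq:CC_char} yields $\sum_{b\in F(L)}b\neq 0$, so $\psi$ blows up in modulus as $y\to[L]$. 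Combined with the kernel torus $\TT_A$ of $\varphi_B$, which acts trivially on $D_B$ by Corollary~\ref{Co:Coamoebae}, the argument drift along the path can be compensated while $y$ advances toward $[L]$.

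I would complete the argument by induction on $d$. The inductive step uses Theorem~\ref{Th:main_components} to identify each top-dimensional stratum of $\pls(D_B)$ as a prism over a discriminant coamoeba in some dimension $d'<d$; the base cases $d'=1$ and $d'=2$ are handled respectively by Corollary~\ref{Co:dim_Discr_Coamoeba} (a single point) and by the explicit polyhedral description of~\cite{NP,PS}, in which the closed discriminant coamoeba is the complement of a zonotope whose boundary equals $\pls(D_B)$. The main obstacle will be handling fibers whose closure in $\PP^{d-1}$ avoids $[\calH_B]$ entirely --- anomalous compact real-analytic loci sitting over critical values of $\Arg\circ\psi$. Since $\psi$ is birational with inverse the logarithmic Gauss map by Proposition~\ref{P:Kapranov}, the set of such critical values is a proper real-analytic subvariety of $\TT^d$, and showing that its intersection with $\coscrA(D_B)$ is itself contained in $\scrP^\infty(D_B)$ --- the coamoeba analog of the non-exposed boundary behavior in the solidity of discriminant amoebas~\cite{PST} --- is the delicate point where new ideas are likely required.
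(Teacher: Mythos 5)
This statement is not proved in the paper; it is stated explicitly as Conjecture~\ref{Conj:Main}, supported only by Example~\ref{Ex:plane} and ``some computations.'' There is no paper proof to compare against, so what you are offering is a strategy for an open problem, not a reconstruction of an existing argument.

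As a strategy sketch, your proposal is reasonable in outline but does not constitute a proof, and you say so yourself: the ``delicate point where new ideas are likely required'' --- fibers of $\Arg\circ\psi$ whose closure in $\PP^{d-1}$ misses $[\calH_B]$ --- is precisely the content of the conjecture. Showing that every point of $\coscrA(D_B)$ has an unbounded preimage sequence is equivalent to showing that every fiber of $\Arg\circ\psi$ restricted to $\PP^{d-1}\smallsetminus[\calH_B]$ is non-compact, and the dimension count (fiber dimension $\geq d-2\geq 1$) only rules out isolated fibers, not compact positive-dimensional ones. Your appeal to a reduction via Proposition~\ref{P:Kapranov} and the ``critical values of $\Arg\circ\psi$'' is also soft: the set of critical values of a real-analytic map need not be a proper real-analytic subvariety of the target (Sard gives measure zero, not analyticity), so the proposed reduction of the problem to a lower-dimensional locus is not established. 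A further unclear step is the claim that the torus $\TT_A\subset\TT^A$ can ``compensate'' argument drift along a path toward $[L]$: $\TT_A$ is the kernel of $\varphi_B$ on $\TT^A$, but it does not act on $\PP^{d-1}\smallsetminus[\calH_B]$ compatibly with $\psi$ (only the scalar $\CC^\times$ does, and that acts trivially on $\Arg\circ\psi$), so it is not a tool for steering the path. In short: the paper proves (Theorem~\ref{Th:conjecture}) that \emph{if} Conjecture~\ref{Conj:Main} holds, then for $d=3$ the closed coamoeba is a union of prisms; neither the paper nor your proposal establishes the conjecture itself.
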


In particular, the discriminant coamoeba is a subset of its phase limit set.
Equivalently, any fiber above  regular value of the argument map on $D_B$ is unbounded.
When $d=3$, we have an appealing structure theorem, which we prove in Section~\ref{SS:tropDiscr}.

\begin{theorem}\label{Th:conjecture}
  When $d=3$ the phase limit set of the discriminant $D_B$ is the union of prisms over closed coamoebas of 
  discriminants $D_{B|_H}$, for $H$ a hyperplane of $\calH_B$.
  If in addition Conjecture~$\ref{Conj:Main}$ holds, the closed discriminant coamoeba is this union of prisms.
\end{theorem}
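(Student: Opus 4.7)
The plan is to combine Corollary~\ref{Co:Coamoebae}, which gives $\pls(D_B) \subset \varphi_B(\pls(\calH_B^c))$, with Theorem~\ref{Th:Linear}'s decomposition of $\pls(\calH_B^c)$, and to analyze the image under $\varphi_B$ of each stratum. For $d=3$ the proper nonzero flats of $\calH_B$ are lines and hyperplanes, so the analysis splits into two cases.

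The hyperplane case produces the prisms. For a hyperplane $H$ with $\sum_{b \in F(H)} b \neq 0$ (the non-splitting condition, necessary for sequences approaching $H$ to yield unbounded images under the Horn-Kapranov map $\psi$), the closed stratum per Definition~\ref{D:componentsOfComplements} is
\[
  \pls_H(\calH_B^c) \ =\  \overline{\coscrA((\calH_B/H)^c)}\times \overline{\coscrA((\calH_B|_H)^c)}.
\]
Writing $\Arg\varphi_B\colon \TT^B \to \TT^d$ additively as the linear map $(\theta_b)\mapsto \sum_b b\,\theta_b$ and using Remark~\ref{R:parallel} for the parallel forms $F(H)=\{q_b \eta : b\in F(H)\}$ (where $\eta$ is the primitive normal to $H$), the first factor is a coset of the diagonal subtorus of $\TT^{F(H)}$, and its image is a coset of the $1$-dimensional subtorus $\TT_\eta\subset \TT^d$ — the non-splitting hypothesis enters here via $s:=\sum q_b \neq 0$. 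For the second factor, I would fix a splitting $V^*=H^*\oplus \RR\eta$ and observe that $\sum_{b\in F(H)^c} b|_H = 0$ in $H^*$ (since $\sum_{b\in B}b = 0$ and $\sum_{b\in F(H)} b \in \RR\eta$), which means $B|_H$ is itself a valid Gale configuration. The image of the second factor then decomposes into an $H^*$-component equal to $\overline{\coscrA(D_{B|_H})}$ and an $\eta$-component which is absorbed into the free $\TT_\eta$ contributed by the first factor. Combining these shows $\varphi_B(\pls_H(\calH_B^c))$ is a prism over $\overline{\coscrA(D_{B|_H})}$ with fiber $\TT_\eta$.

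For the line case, the forms in $F(\ell)$ all lie in the $2$-dimensional annihilator $\ell^\perp\subset V^*$, so $\Arg\varphi_B$ applied to $\pls_\ell(\calH_B^c)$ has image in a proper subtorus of $\TT^d$. Applying Theorem~\ref{Th:Linear} recursively to the $2$-dimensional closed coamoeba $\overline{\coscrA((\calH_B/\ell)^c)}$ refines the stratum by lines of $\calH_B/\ell$, which correspond precisely to hyperplanes of $\calH_B$ containing $\ell$; the stratum indexed by the full flag $\{0\}\subset \ell\subset H$ then maps into the prism for $H$, yielding $\varphi_B(\pls_\ell(\calH_B^c))\subset \bigcup_{H\supset \ell}\varphi_B(\pls_H(\calH_B^c))$. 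For the reverse inclusion, given any point of a prism over $\overline{\coscrA(D_{B|_H})}$, I would construct an unbounded sequence $y_i\to y$ in $V\setminus \calH_B$ with $y\in H\setminus \calH_B|_H$ realizing the $\overline{\coscrA(D_{B|_H})}$ component, where the direction of approach is chosen so that $\arg\langle\eta,y_i\rangle$ realizes the $\TT_\eta$ component; the Horn-Kapranov parameterization~\eqref{Eq:H-K} together with the non-splitting condition ensures $\psi(y_i)\to\infty$ in $D_B$, so the limit argument lies in $\pls(D_B)$.

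The main obstacle will be the line-strata analysis: while the dimension count already forces these images to be lower-dimensional, carefully verifying that $\varphi_B(\pls_\ell(\calH_B^c))$ is covered by the union of hyperplane prisms — with a uniform treatment of splitting hyperplanes through $\ell$ — requires a detailed analysis of the $\TT_\eta$-directional behavior for the various normals $\eta$ to hyperplanes through $\ell$, and of how the contributions from the two factors of $\pls_\ell(\calH_B^c)$ interact in $\TT^d$. Finally, the second statement of the theorem follows immediately: assuming Conjecture~\ref{Conj:Main}, $\overline{\coscrA(D_B)}=\pls(D_B)$, which by the first part equals the stated union of prisms.
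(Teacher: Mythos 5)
Your overall strategy — decompose $\pls(\calH_B^c)$ via Theorem~\ref{Th:Linear} by flats, push forward under $\varphi_B$, show hyperplane strata give prisms and argue that the line strata are absorbed — is the same as the paper's, and your hyperplane analysis is correct; it is essentially a direct re-derivation of Theorem~\ref{Th:main_components} in the $d=3$ case, including the role of the non-splitting condition via Remark~\ref{R:parallel}.

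The gap you flag in the line-strata argument is real and is not repaired by your proposed recursion. Refining $\pls_\ell(\calH_B^c)=\overline{\coscrA((\calH/\ell)^c)}\times\coscrA((\calH|_\ell)^c)$ via Theorem~\ref{Th:Linear} only decomposes the \emph{boundary} of the first factor into flag strata $\pls_{\{0\}\subset\ell\subset H}$; the open interior $\coscrA(\ini_\ell\calH_B^c)$ is not contained in any flag stratum of the Bergman fan, so $\varphi_B(\pls_\ell(\calH_B^c))\subset\bigcup_{H\supset\ell}\varphi_B(\pls_H(\calH_B^c))$ does not follow from the flag refinement alone. The paper avoids this by working with the fan structure on $\trop(D_B)$ rather than on $\trop(B)$: by Lemma~\ref{L:rays}, $\pls(D_B)$ is the union of $\pls_\rho(D_B)$ over rays $\rho$ of $\trop(D_B)$; Lemma~\ref{L:Ray_classification} classifies these as either $\RR_{\geq 0}b_L$ for a flacet $L$ with $b_L\ne 0$ or not the image of a Bergman ray; Lemma~\ref{L:type_2} absorbs the latter into the former; and Lemma~\ref{L:type_1} shows one-dimensional flacets contribute only lower-dimensional strata. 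The point you are missing is that the direction $b_\ell=\varphi_B(\omega_\ell)$ for a line flacet is in general \emph{not} a ray of (a coarse fan structure on) the two-dimensional tropical discriminant, but lies in the interior of a maximal cone whose edges are hyperplane directions $b_H$, so $\pls_{b_\ell}(D_B)$ is absorbed by face containment into the hyperplane prisms (this is illustrated by $b_{124},b_{236}$ in Examples~\ref{Ex:Tdiscr} and~\ref{Ex:facial_discriminants}). That is a statement about the image fan in $\RR^d$, not about the Bergman fan, and it is what the flag-refinement argument cannot see. With that machinery in place, the paper's actual proof of Theorem~\ref{Th:conjecture} is a three-line deduction from Theorems~\ref{Thm:PLS_Discriminant} and~\ref{Th:main_components}, after noting that essential flacets in dimension three are exactly the non-splitting hyperplanes.

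Your second paragraph (the second assertion of the theorem follows immediately from Conjecture~\ref{Conj:Main}) is correct and matches the paper.
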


\begin{remark}\label{R:prisms}
  The prisms in Theorem~\ref{Th:conjecture} all have a precise description using the constructions in  Section~\ref{SS:tropDiscr}
  and the results of~\cite{NP,PS}.
  Let $H\in\calH_B$ be a non-splitting hyperplane with $b\in B$ an element vanishing on $H$.
  Let $B|_H$ be the multiset of non-zero elements in the image of $B$ in $\ZZ^3/\ZZ b\simeq\ZZ^2$.
  By Lemma~\ref{L:factors}, $D_{B|_H}$ is a reduced discriminant.
  We have the map $\pr\colon\TT^3\to\TT^3/\TT_b=:\TT_H\simeq\TT^2$ and the closed coamoeba of $D_{B|_H}$ is a subset of $\TT_H$.
  Then the corresponding component of $\pls(D_B)$ is $\pr_H^{-1}(\overline{\coscrA(D_{B|_H})})$.

  Each discriminant coamoeba $\overline{\coscrA(D_{B|_H})}$ has a description as a polyhedral object~\cite{NP,PS}.
  We illustrate this in Example~\ref{Ex:facial_discriminants}.\hfill$\diamond$
\end{remark}

Our results use an understanding of strata of the phase limit set $\pls(D_B)$ in terms of the matroid of $B$.
This comes from two sources.
First, our understanding of  $\pls(\calH^c_B)$ from Section~\ref{S:LinearSpace} and the dominant map
$\varphi_B\colon\calH^c_B\to D_B$ leads to 
Theorem~\ref{Th:main_components} about the structure of strata of $\pls(D_B)$ corresponding to non-splitting flats of $\calH_B$.
Second, by Lemma~\ref{L:rays}, $\pls(D_B)$  is the union of strata corresponding to rays of the tropical variety \defcolor{$\trop(D_B)$}.
We observe that not all rays of $\trop(D_B)$ are needed and note that we only have a partial understanding of $\trop(D_B)$:
As it is the image of the Bergman fan under the map $\varphi_B\colon\RR^B\to\RR^d$, this map may be neither injective nor
surjective on rays. 
This leads to Theorem~\ref{Thm:PLS_Discriminant}, about rays that contribute $d$-dimensional strata to  $\pls(D_B)$.

Recall from Definition~\ref{D:componentsOfComplements} that for a flag of flats $\calL$ there is a cone
\defcolor{$\sigma_\calL$} in the fine subdivision of $\trop(B)$~\cite{AK} whose corresponding initial scheme
$\ini_\calL\calH^c_B$ is the set $\calH(\calL)^c$~\eqref{Eq:calH(calL)}.
Its closed coamoeba is the component $\pls_\calL(\calH^c_B)$ of the phase limit set of $\calH^c_B$.
We consider the image in $\TT^d$ under $\varphi_B$ of the coamoeba of such a component.
For a flat $L$, let \defcolor{$\pls_L(D_B)$} be the closed coamoeba of $\varphi_B(\ini_L\calH^c_B)$,
and for  a flag of flats $\calL$, let \defcolor{$\pls_\calL(D_B)$} be the closed coamoeba of $\varphi_B(\ini_\calL\calH^c_B)$.
By Proposition~\ref{P:easyTropicalFunctoriality}\eqref{easyTropicalFunctoriality.4}, these are images of the corresponding
components of $\pls(\calH^c_B)$ under the map $\varphi_B$.
More precisely,
\[
   \pls_L(D_B)\ =\ \varphi_B(\pls_L(\calH^c_B)) \qquad\mbox{and}\qquad
   \pls_\calL(D_B)\ =\ \varphi_B(\pls_\calL(\calH^c_B))\,.
\]
We show in Theorem~\ref{Th:main_components} that if $L$ is a non-splitting flat of dimension at least two, then $\pls_L(D_B)$ is a prism
over the closed coamoeba of a reduced discriminant corresponding to the restriction \defcolor{$B|_L$} of $B$ to $L$, and thus has
dimension $d$.
A flacet $L$ of $\calH$ is \demph{essential} if $\dim L>1$ and $0\neq \varphi_B(\omega_L) = \sum\{b\mid b|_L\equiv 0\}$,
as $\omega_L\in \RR^B$ is the indicator function of those $b$ that vanish on $L$.
Theorem~\ref{Thm:PLS_Discriminant} asserts that the $d$-dimensional components of $\pls(D_B)$ consist of the strata
$\pls_L(D_B)$ for essential flacets $L$.

In Section~\ref{Sec:component_identification} we give a coordinate-free description of these objects,  describe the structure of
$\pls_L(D_B)$, and prove Theorem~\ref{Th:main_components}.
We then study the tropical discriminant, its rays, and corresponding strata in Section~\ref{SS:tropDiscr}, where we also
prove Theorems~\ref{Th:conjecture} and~\ref{Thm:PLS_Discriminant}.
The results of this section will be illustrated on our running example in Example~\ref{Ex:facial_discriminants}.

\subsection{Components of $\pls_L(D_B)$}\label{Sec:component_identification}

As in Section~\ref{S:discriminant}, let $\defcolor{A}\subset\ZZ^m$ be $m{+}d$ vectors which span $\ZZ^m$ and  lie on an
affine hyperplane. 
They determine a map $\lambda_A\colon \ZZ^{m+d}\twoheadrightarrow\ZZ^m$ whose kernel \defcolor{$N$} is a free abelian group
of rank $d$. 
The coordinate functions on $\ZZ^{m+d}$ define the embedding $N\hookrightarrow\ZZ^{m+d}$ and are given by $m{+}d$ vectors
\defcolor{$B$} that span the dual $\defcolor{M}\vcentcolon=\Hom(N,\ZZ)$ to $N$.
This set $B$ is a Gale dual to $A$.
Elements of both $A$ and $B$ correspond to the coordinates of $\ZZ^{m+d}$ and are in a canonical one-to-one correspondence.

The torus $\defcolor{\CC^\times_N}\vcentcolon=N\otimes_\ZZ\CC^\times$ has $N$ as its group of cocharacters,
$\Hom(\CC^\times,\CC^\times_N)$, and $M$ as its group of characters, $\Hom(\CC^\times_N,\CC^\times)$.
The groups $M$ and $N$ are naturally dual free abelian groups.
The $d$-dimensional vector spaces $\defcolor{\CC_N}\vcentcolon=N\otimes_\ZZ\CC$ and
$\defcolor{\CC_M}\vcentcolon=M\otimes_\ZZ\CC$ are then dual vector spaces.
For these tensor products of abelian groups,  an integer $k\in\ZZ$ acts on $\CC^\times$ via exponentiation,
$k.t\vcentcolon=t^k$ for
$t\in\CC^\times$ and on $\CC$ via multiplication, $k.\xi\vcentcolon=k\cdot \xi$ for $\xi\in\CC$.

The vectors in $B$ induce a hyperplane arrangement $\calH_B$ in $\CC_N$ (this is $V$ in the notation
of Section~\ref{S:LinearSpace}).
Evaluation at elements of $B$ induces a linear injection
$\lambda_B\colon\CC_N\hookrightarrow\CC^B$ with  $\CC_N{\smallsetminus}\calH_B=\lambda_B^{-1}((\CC^\times)^B)$.
Let $\defcolor{\calH^c_B}\vcentcolon=\lambda_B(\CC_N{\smallsetminus}\calH_B)\subset(\CC^\times)^B$ be the image in
$(\CC^\times)^B$ of the hyperplane complement $\CC_N{\smallsetminus}\calH_B$. 
The multiset $B\subset M$ gives the homomorphism
\[
   \defcolor{\varphi_B}\ \colon\ (\CC^\times)^B\ \ni\ (z_b\mid b\in B)\ \longrightarrow\
   \prod_{b\in B} z_b^b\ \in\ \CC^\times_M\,,
\]
as $M$ is the group of cocharacters of $\CC^\times_M\simeq(\CC^\times)^d$.
Then $\defcolor{D_B}\subset\CC^\times_M$ is the closure of $\varphi_B(\calH^c_B)$.

\begin{remark}\label{R:genDiscr}
  Given any multiset $B$ of nonzero vectors spanning $M$, these definitions of $\lambda_B,\varphi_B,\calH_B$, and $D_B$ make sense.
  Call $D_B$ the \demph{generalized discriminant} associated to $B$.
  If $\sum\{ b\mid b\in B\}=0$, then any Gale dual $A$ to $B$ lies in an affine hyperplane, so $D_B$ is a
  discriminant.
  If $A$ is nondefective configuration then $D_B$ is a hypersurface.
%
%
  \hfill$\diamond$ 
\end{remark}

Let $L\neq\{0\}$ be a flat of the arrangement $\calH_B$.
We study the component of $\pls(D_B)$ corresponding to the image of $\ini_L\calH_B^c$.
By Lemma~\ref{L:calLinitial},  $\ini_L\calH_B^c$ is a product of hyperplane complements in $L$ and in $\CC_N/L$.
We use this to study  its  image under $\varphi_B$ and the coamoeba of that image.
The precise statements are Corollary~\ref{C:factors-fibered} and Theorem~\ref{Th:main_components} below.

Let  $\defcolor{F}\vcentcolon=\{b\in B\mid b|_L\equiv 0\}$ be the flat of the matroid $B$ corresponding to $L$.
Then $L=\calV(F)$.
The flat $F$ spans a rational linear subspace $\defcolor{\QQ_F}\subset \QQ_M$ with corresponding saturated subgroup
$\defcolor{L^\perp}\vcentcolon=\QQ_F\cap M\subset M$.
Note that $L^\perp$ is the saturation of the $\ZZ$-span of $F$ in $M$, and that $F=L^{\perp}\cap B$.
Set $\defcolor{F^c}\vcentcolon=B\smallsetminus F$.

The partition $B=F\coprod F^c$ induces the identification
 \begin{equation} \label{Eq:splitting}
    (\CC^\times)^B\ =\  (\CC^\times)^F\times(\CC^\times)^{F^c}\,.
 \end{equation}
We study how this interacts with the map $\varphi_B\colon (\CC^\times)^B\to \CC^\times_M$.
Note that the inclusion $L^\perp\subset M$ gives the short exact sequence
$L^\perp\hookrightarrow M\twoheadrightarrow M/L^\perp$, which induces a short exact sequence of tori
$\CC^*_{L^\perp}\hookrightarrow \CC^*_M\twoheadrightarrow \CC^*_{M/L^\perp}$.
This is the second row of a commutative diagram with surjective vertical maps and whose top row is split (by~\eqref{Eq:splitting})
\begin{equation}\label{Eq:cDiagram}
   \begin{array}{rcccccl}
     \bOne\ \longrightarrow&(\CC^\times)^F&\longrightarrow&(\CC^\times)^B
     &\xrightarrow{\ \White{\pr}\ }&(\CC^\times)^{F^c}& \longrightarrow\ \bOne
     \\
     & \bigg\downarrow\varphi_F&&\bigg\downarrow\varphi_B&&\bigg\downarrow \varphi_{F^c}&
     \\
     \bOne\ \longrightarrow&\CC^\times_{L^\perp}\hspace{2pt} &\longrightarrow&\CC^\times_M\hspace{4pt}
     &\xrightarrow{\ \defcolor{\pr}\ }&\hspace{4pt}\CC^\times_{M/L^\perp}& \longrightarrow\ \bOne
   \end{array}\ .
 \end{equation}
Here, $\defcolor{\varphi_F}=\varphi_B|_{(\CC^\times)^F}$ and $\defcolor{\varphi_{F^c}}\vcentcolon=\pr\circ\varphi_B|_{(\CC^\times)^{F^c}}$.

As in Section~\ref{S:hyperplane}, $F$ induces a hyperplane arrangement \defcolor{$\calH/L$} in the quotient space $\CC_N/L$.
The linear map $\lambda_F\colon \CC_N\to\CC^F\subset\CC^B$ factors through $\CC_N/L$ and induces an embedding of the complement
$\CC_N/L\smallsetminus \calH/L$ into $(\CC^\times)^F$.
Write \defcolor{$(\calH/L)^c$} for its image in $(\CC^\times)^F$.
Similarly, $F^c$ induces the hyperplane arrangement $\calH|_L$ in $L$.
Restricting the linear map $\lambda_{F^c}\colon \CC_N\to \CC^{F^c}\subset\CC^B$ to $L\subset\CC_N$  embeds the complement
$L\smallsetminus \calH|_L$ into  $(\CC^\times)^{F^c}$.
Write \defcolor{$(\calH|_L)^c$} for its image in $(\CC^\times)^{F^c}$.

By Lemma~\ref{L:calLinitial} applied to the flag $\{0\}\subsetneq L\subsetneq\CC_N$, we have
$\ini_L\calH^c_B=(\calH/L)^c\times (\calH|_L)^c$, which is compatible with the splitting~\eqref{Eq:splitting}.
The image $\varphi_B(\coscrA(\ini_L\calH^c_B))$ of the coamoeba of $\ini_L\calH^c_B$ under the map $\varphi_B$
is a dense subset of $\coscrA(\varphi_B(\ini_L\calH^c_B))$, by
Proposition~\ref{P:easyTropicalFunctoriality}\eqref{easyTropicalFunctoriality.3}.
Let \defcolor{$\pls_L(D_B)$} be the common closure.
Then
\[ 
   \pls_L(D_B)\ =\ 
      \overline{ \varphi_B\bigl(\coscrA(\calH/L)^c \times \coscrA (\calH|_L)^c\bigr)}\,.
\]
We call $\pls_L(D_B)$ the stratum of the phase limit set $\pls(D_B)$ corresponding to $L$.

In the group $(\CC^\times)^B = (\CC^\times)^F\times(\CC^\times)^{F^c}$,
$\ini_L\calH^c_B$ is the product of sets 
\[
   \ini_L\calH^c_B\ =\  (\calH/L)^c\,\times\,(\calH|_L)^c\ =\ 
   \bigl((\calH/L)^c\times\bOne\bigr)\; \cdot\; \bigl(\bOne \times (\calH|_L)^c\bigr)\,.
\]
We use this and the commutative diagram~\eqref{Eq:cDiagram} to study the image $\varphi_B(\ini_L\calH^c_B)$ in $\CC^\times_M$. 
As the span of $F\subset M$ has saturation $L^\perp$ in $M$, the definitions of $\varphi_\bullet$ and $\lambda_\bullet$  imply that
\[
   \varphi_B\bigl((\calH/L)^c \times \bOne\bigr)\ = \
    \varphi_F\bigl((\calH/L)^c \bigr)\ = \ \Cyan{D_F}\ \subset\ \CC^\times_{L^\perp}\ ,
\]
where $D_F$ is the generalized discriminant associated to $F$.
It is generalized as we do not know if $F$ is Gale dual to a nondefective configuration.

Let us write \defcolor{$B|_L$} for the image of $F^c$ in $M/L^\perp$.
This is the multiset of nonzero vectors in the image of $B$ in the dual of $L$ (the restriction of $B$ to $L$), and $B|_L$
induces the hyperplane arrangement $\calH|_L$.
The image of  $\varphi_B( \bOne \times (\calH|_L)^c) \subset \CC^\times_M$
under the map  $\pr\colon \CC^\times_M\to\CC^\times_{M/L^\perp}$ has closure the generalized discriminant \defcolor{$D_{B|_L}$}
associated to $B|_L$.
This is also the closure of $\varphi_{F^c}(\calH|_L)^c$, where $\varphi_{F^c}$ is the rightmost vertical map in~\eqref{Eq:cDiagram}.

\begin{lemma}\label{L:factors}
  The image of $\varphi_B\bigl((\calH/L)^c \times \bOne\bigr)$ in $\CC^\times_{L^\perp}\subset\CC^\times_M$ is the generalized discriminant
  $D_F$ and  the image under $\pr$ of $\varphi_B\bigl( \bOne \times (\calH|_L)^c\bigr)$ in $\CC^\times_{M/L^\perp}$ is
  the discriminant $D_{B|_L}$.
  If $L$ is non-splitting, then $D_{B|_L}$ is a reduced discriminant.
\end{lemma}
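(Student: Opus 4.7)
The plan is to verify the three assertions in order. The first two are bookkeeping that follow from the commutative diagram~\eqref{Eq:cDiagram} and the definition of generalized discriminants in Remark~\ref{R:genDiscr}; the third, which is the substantive content, requires translating the non-splitting characterization~\eqref{Eq:CC_char} through the quotient $M \twoheadrightarrow M/L^\perp$.

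For the first assertion, since $(\calH/L)^c \times \bOne$ sits in the image of the top-left inclusion $(\CC^\times)^F \hookrightarrow (\CC^\times)^B$, commutativity of the left square of~\eqref{Eq:cDiagram} gives $\varphi_B((\calH/L)^c \times \bOne) = \varphi_F((\calH/L)^c) \subset \CC^\times_{L^\perp}$. As $F$ is a multiset of nonzero vectors rationally spanning $L^\perp$ and $\CC_N/L$ is naturally the dual of $L^\perp_\CC$, the image $\varphi_F(\lambda_F(\CC_N/L \smallsetminus \calH/L))$ is by construction (Remark~\ref{R:genDiscr}) the generalized discriminant $D_F$. Symmetrically, the right square of~\eqref{Eq:cDiagram} and the top-right projection give $\pr(\varphi_B(\bOne \times (\calH|_L)^c)) = \varphi_{F^c}((\calH|_L)^c)$. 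Because $F = B \cap L^\perp$, each element of $F^c$ projects to a nonzero vector of $M/L^\perp$, and these projections are precisely the multiset $B|_L$; moreover $(\calH|_L)^c$ coincides with $\lambda_{B|_L}(L \smallsetminus \calH|_L)$ under the identification $F^c \leftrightarrow B|_L$, so Remark~\ref{R:genDiscr} applied to $B|_L$ identifies the image as $D_{B|_L}$.

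For the third and substantive assertion, the plan is to exhibit a flag in $B|_L$ satisfying the non-splitting condition~\eqref{Eq:CC_char}, which by the characterization recalled before~\eqref{Eq:CC_char} implies that the Gale dual of $B|_L$ is nondefective and hence $D_{B|_L}$ is reduced. Set $k \vcentcolon= \dim L$, so $F$ has matroid rank $d-k$. By hypothesis $F$ sits as $F_{d-k}$ in some non-splitting flag $F_1 \subsetneq \dotsb \subsetneq F_{d-1}$ of $B$. Define $G_j$ to be the image of $F_{d-k+j}$ in $B|_L$ for $j = 1,\dotsc,k-1$. Since $L^\perp_\CC = \mathrm{span}_\CC(F) \subseteq \mathrm{span}_\CC(F_{d-k+j})$, the subspace $\mathrm{span}_\CC(G_j) \subset \CC_{M/L^\perp}$ has dimension $(d-k+j) - (d-k) = j$; and any $c \in B|_L$ with $c \in \mathrm{span}_\CC(G_j)$ lifts to $b \in B$ with $b \in \mathrm{span}_\CC(F_{d-k+j}) + L^\perp_\CC = \mathrm{span}_\CC(F_{d-k+j})$, forcing $b \in F_{d-k+j}$ and hence $c \in G_j$, so $G_j$ is a flat of $B|_L$. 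Thus $G_1 \subsetneq \dotsb \subsetneq G_{k-1}$ is a complete flag of flats of $B|_L$.

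To verify the non-splitting condition for $G_\bullet$ at position $j$, note that $\sum_{b \in G_j} b$ is the image of $\sum_{b \in F_{d-k+j}} b$ in $M/L^\perp$; were it to lie in $\mathrm{span}_\CC(G_{j-1})$ then $\sum_{b \in F_{d-k+j}} b$ would lie in $\mathrm{span}_\CC(F_{d-k+j-1}) + L^\perp_\CC$, which collapses to $\mathrm{span}_\CC(F_{d-k+j-1})$ since $F = F_{d-k} \subseteq F_{d-k+j-1}$, contradicting~\eqref{Eq:CC_char} at position $d-k+j$ for $F_\bullet$. Hence $G_\bullet$ is non-splitting in $B|_L$, completing the argument. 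The main obstacle is the index/rank bookkeeping in this translation, which is essentially mechanical once the positions are aligned.
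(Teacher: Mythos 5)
Your proof is correct and follows the paper's approach: identify $D_F$ and $D_{B|_L}$ via the commutative diagram~\eqref{Eq:cDiagram}, then restrict a non-splitting flag witnessing nondefectiveness of $A$ to the quotient $M/L^\perp$ and check that~\eqref{Eq:CC_char} persists. You are more explicit than the paper in verifying that the restricted flag $G_1\subsetneq\dotsb\subsetneq G_{k-1}$ is a complete flag of flats of $B|_L$ and still satisfies~\eqref{Eq:CC_char} (the paper asserts this in a single sentence), but you omit the quick observations, present in the paper, that $\sum_{\overline b\in B|_L}\overline b=0$ (the image of $\sum_{b\in B}b=0$) and that $B|_L$ spans $M/L^\perp$ because $B$ spans $M$; these are what entitle you to call $D_{B|_L}$ a \emph{discriminant} rather than merely a generalized discriminant.
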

\begin{proof}
  The identifications of  $D_F$ and $D_{B|_L}$ were made before the statement of the lemma, except for the observation that  $D_{B|_L}$
  is a discriminant and is reduced when $L$ is non-splitting.
  First, note that $B|_L$ is Gale dual to the subset \defcolor{$A|_L$} consisting of vectors corresponding to $F^c$.
  As $B$ spans $M$, its image $\{\overline{b}\mid \overline{b}\in B|_L\}$ spans $M/L^\perp$.
  Also, as $\sum\{b\mid b\in B\}=0$ its image also has sum zero, and thus $D_{B|_L}$ is a discriminant.

  Suppose that $L$ is a non-splitting flat.
  Then $L$ lies in a non-splitting flag $\calL$ of flats satisfying~\eqref{Eq:CC_char}
  that is a witness for $A$ to be nondefective.
  The image of the flag $\calL$ in $M/L^\perp$ (its restriction to $L$) gives a non-splitting flag as it satisfies
  the condition~\eqref{Eq:CC_char}.
  Thus  $A|_L$ is nondefective and $D_{B|_L}$ is a reduced discriminant.
\end{proof}

We study the map $\pr\colon\varphi_B(\ini_L\calH^c_B) \to D_{B|_L}$ that is the restriction of
$\pr\colon\CC^\times_M\to\CC^\times_{M/L^\perp}$.

\begin{lemma}\label{L:fibered}
  Under the map $\pr\colon\CC^\times_M\to \CC^\times_{M/L^\perp}$, the fiber of  $\varphi_B(\ini_L\calH^c_B)$ over a point
  $z\in D_{B|_L}$ is the union of translates of $D_F$ by $\varphi_B(\bOne\times \{y\})$ for those $y\in(\calH|_L)^c$ with image $z$.
\end{lemma}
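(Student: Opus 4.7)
The plan is to unpack the product structure of $\ini_L\calH^c_B$ supplied by Lemma~\ref{L:calLinitial}, push it through the commutative diagram~\eqref{Eq:cDiagram}, and read off the fiber factor by factor. First I would note that $\ini_L\calH^c_B=(\calH/L)^c\times(\calH|_L)^c$ sits inside $(\CC^\times)^F\times(\CC^\times)^{F^c}=(\CC^\times)^B$, so every point of $\varphi_B(\ini_L\calH^c_B)$ has the form $\varphi_B(x,y)$ with $x\in(\calH/L)^c$ and $y\in(\calH|_L)^c$. Since $\varphi_B$ is a homomorphism compatible with the product splitting in the top row of~\eqref{Eq:cDiagram},
\[
\varphi_B(x,y)\ =\ \varphi_F(x)\cdot\varphi_B(\bOne\times y),
\]
and $\varphi_F(x)$ lies in the subtorus $\CC^\times_{L^\perp}$, which is the kernel of $\pr$ in the bottom row.

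Next I would identify the fiber condition. Applying $\pr$ and using commutativity of the right square of~\eqref{Eq:cDiagram},
\[
\pr(\varphi_B(x,y))\ =\ \pr(\varphi_F(x))\cdot\pr(\varphi_B(\bOne\times y))\ =\ \varphi_{F^c}(y),
\]
since $\pr(\varphi_F(x))=\bOne$. Hence $\varphi_B(x,y)\in\pr^{-1}(z)$ if and only if $\varphi_{F^c}(y)=z$, i.e., if and only if $y$ is a preimage of $z$ under the map $(\calH|_L)^c\to D_{B|_L}$ of Lemma~\ref{L:factors}.

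For each such $y$, letting $x$ range over $(\calH/L)^c$ traces out the set $\varphi_F((\calH/L)^c)\cdot\varphi_B(\bOne\times y)$ inside $\pr^{-1}(z)$, which by Lemma~\ref{L:factors} is exactly the translate of $D_F$ (in its dense parametrization $\varphi_F((\calH/L)^c)$) by $\varphi_B(\bOne\times y)$. Taking the union over all valid $y$ yields the stated description of the fiber.

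The main obstacle is careful bookkeeping rather than new ideas: one must verify that the kernel of $\pr\colon\CC^\times_M\to\CC^\times_{M/L^\perp}$ is precisely $\CC^\times_{L^\perp}$ (a saturation check, since $L^\perp$ is the saturation in $M$ of the $\ZZ$-span of $F$, so the quotient $M/L^\perp$ is torsion-free and $\pr$ has connected kernel), and be explicit that the symbol $D_F$ in the statement is implicitly identified with its dense image $\varphi_F((\calH/L)^c)$ before taking closure in $\CC^\times_{L^\perp}$. Granted these two checks, the argument is a straightforward unwinding of~\eqref{Eq:cDiagram}.
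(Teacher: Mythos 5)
Your proposal is correct and takes essentially the same route as the paper: factor $\varphi_B$ through the splitting $(\CC^\times)^B=(\CC^\times)^F\times(\CC^\times)^{F^c}$ via the commutative diagram~\eqref{Eq:cDiagram}, observe that the $(\calH/L)^c$-factor lands in $\ker\pr=\CC^\times_{L^\perp}$, and read off the fiber. The two caveats you flag (that $\ker\pr$ is exactly $\CC^\times_{L^\perp}$ because $L^\perp$ is saturated, and that $D_F$ is being used for the dense image $\varphi_F((\calH/L)^c)$ rather than its closure) are real but implicit in the paper's own one-line proof as well.
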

\begin{proof}
  Observe that
  \[
  \varphi_B(\ini_L\calH^c_B)\ =\ 
  \varphi_B\bigl((\calH/L)^c \times (\calH|_L)^c\bigr)  \ =\ D_F\; \cdot\;
  \varphi_B\bigl(\bOne\times (\calH|_L)^c\bigr)\,.
  \]
  As $D_F$  lies in the kernel $\CC^\times_{L^\perp}$ of $\pr$~\eqref{Eq:cDiagram}, we see that
  $\pr^{-1}(z)\cap \varphi_B(\ini_L\calH^c_B)$ is the translation of $D_F$ by points
  $y\in \pr^{-1}(z)\cap \varphi_B\bigl(\bOne\times (\calH|_L)^c\bigr)$.
  This proves the lemma.
\end{proof}

The set $\varphi_B(\coscrA(\ini_L\calH^c_B))\subset \TT_M$ is a dense
subset of the stratum $\pls_L(D_B)$ of the phase limit set of $D_B$ corresponding to the flat $L$ of $\calH_B$.
The commutative diagram~\eqref{Eq:cDiagram} of exact sequences of tori restricts to a commutative diagram of exact
sequence of compact subtori. 
Lemmas~\ref{L:factors} and~\ref{L:fibered} have the following consequence for coamoebas.

\begin{corollary}\label{C:factors-fibered}
  Let $L$ be a flat of $\calH_B$.
  Then $\overline{\varphi_B(\coscrA((\calH/L)^c))}=\overline{\coscrA(D_F)}$ and 
  \[
     \overline{\pr(\varphi_B(\coscrA(\ini_L\calH^c_B)))}\ =\ 
     \overline{\coscrA(D_{B|_L})}\ \subset\ \TT_{M/L^\perp}\,.
  \]
  The fiber of $\pr$ over $\theta\in \coscrA(D_{B|_L})$ is the union of the translates
  $\coscrA(D_F) \cdot \vartheta$ for all $\vartheta\in \varphi_B(\bOne\times \coscrA(\calH|_L)^c)$ with
  $\pr(\vartheta)=\theta$.
\end{corollary}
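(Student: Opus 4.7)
The plan is to deduce all three assertions from the algebraic statements in Lemmas~\ref{L:factors} and~\ref{L:fibered} by applying the argument map $\Arg$ and invoking Proposition~\ref{P:easyTropicalFunctoriality}\eqref{easyTropicalFunctoriality.3}. The key observation is that the commutative diagram~\eqref{Eq:cDiagram} of exact sequences of tori restricts, after applying $\Arg$, to a commutative diagram of exact sequences of compact tori, so projections and fibers behave compatibly with the coamoeba operation. Throughout, I would use that $\Arg\colon\CC^\times_M\to\TT_M$ is a continuous surjective group homomorphism and intertwines $\pr$ on $\CC^\times_M$ with $\pr$ on $\TT_M$.

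For the first equality, the restriction of $\varphi_B$ to $(\calH/L)^c\times\bOne$ coincides with $\varphi_F$, and Lemma~\ref{L:factors} shows that $\varphi_F\colon (\calH/L)^c\to D_F$ is dominant. Applying Proposition~\ref{P:easyTropicalFunctoriality}\eqref{easyTropicalFunctoriality.3} immediately gives that $\varphi_F(\coscrA((\calH/L)^c))$ is dense in $\coscrA(D_F)$, and taking closures yields the claim. For the second equality, $\pr\circ\varphi_B$ annihilates the factor $\varphi_B((\calH/L)^c\times\bOne)\subset \CC^\times_{L^\perp}$, so $\pr\circ\varphi_B$ on $\ini_L\calH^c_B$ factors through the second factor as $\varphi_{F^c}$, which Lemma~\ref{L:factors} identifies as dominant onto $D_{B|_L}$. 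Another application of Proposition~\ref{P:easyTropicalFunctoriality}\eqref{easyTropicalFunctoriality.3} then shows $\pr(\varphi_B(\coscrA(\ini_L\calH^c_B)))$ is dense in $\coscrA(D_{B|_L})$, and the closure equality follows.

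For the fiber statement, I would start from Lemma~\ref{L:fibered}: in $\CC^\times_M$, the fiber over $z\in D_{B|_L}$ of $\pr$ restricted to $\varphi_B(\ini_L\calH^c_B)$ is the union of translates $D_F\cdot\varphi_B(\bOne\times\{y\})$ over those $y\in(\calH|_L)^c$ with $\pr(\varphi_B(\bOne\times\{y\}))=z$. Now apply $\Arg$ pointwise. Since $\Arg$ is a group homomorphism, it converts each translate $D_F\cdot\varphi_B(\bOne\times\{y\})$ into $\coscrA(D_F)\cdot\vartheta$, where $\vartheta=\Arg(\varphi_B(\bOne\times\{y\}))$ lies in $\varphi_B(\bOne\times\coscrA(\calH|_L)^c)$. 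Compatibility of $\Arg$ with $\pr$ in~\eqref{Eq:cDiagram} ensures that $\pr(\vartheta)=\Arg(z)=\theta$, and that every $\theta\in\coscrA(D_{B|_L})$ arises this way from some $z\in D_{B|_L}$. Thus taking $\Arg$ of the fiber decomposition in $\CC^\times_M$ gives precisely the asserted decomposition of the fiber in $\TT_M$.

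The main obstacle I anticipate is bookkeeping with closures: Proposition~\ref{P:easyTropicalFunctoriality}\eqref{easyTropicalFunctoriality.3} only tells us that $\varphi_B(\coscrA(\cdot))$ is dense in the coamoeba of the image, so in interpreting the fiber statement one must distinguish the (open) image $\varphi_B(\coscrA(\ini_L\calH^c_B))$ from its closure $\pls_L(D_B)$. The fiber description as written is the cleanest when read at the level of the image of the coamoeba before closure, and then extended by continuity; I would make this explicit to avoid confusion between the various closures appearing in Corollary~\ref{C:factors-fibered}.
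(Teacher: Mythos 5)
Your overall strategy coincides with the paper's: the corollary is stated in the paper without an explicit proof, just the remark that the commutative diagram~\eqref{Eq:cDiagram} of exact sequences restricts to compact subtori and that Lemmas~\ref{L:factors} and~\ref{L:fibered} then imply the result. Your treatment of the first two equalities is exactly the intended filling-in: identify $\pr\circ\varphi_B$ on $\ini_L\calH^c_B$ with $\varphi_{F^c}$ via the diagram, invoke dominance from Lemma~\ref{L:factors}, and apply Proposition~\ref{P:easyTropicalFunctoriality}\eqref{easyTropicalFunctoriality.3} together with the containment $\varphi(\coscrA(X))\subseteq\coscrA(Y)$ to pass to equality of closures.

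For the fiber statement, there is a small gap in the phrasing. You write that ``taking $\Arg$ of the fiber decomposition in $\CC^\times_M$ gives precisely the asserted decomposition of the fiber in $\TT_M$,'' but $\Arg$ of a single $\CC^\times_M$-fiber over one chosen $z$ with $\Arg(z)=\theta$ need not exhaust the $\TT_M$-fiber over $\theta$: the latter is the union, over \emph{all} $z\in D_{B|_L}$ with $\Arg(z)=\theta$, of the $\Arg$-images of the Lemma~\ref{L:fibered} fibers over $z$. Your existence sentence (``some $z$'') does not by itself control this union. The cleanest way to close the gap, and one that avoids $z$ altogether, is to use the product decomposition directly: since $\Arg$ and $\varphi_B$ are group homomorphisms and $\coscrA$ of a product is the product of coamoebas,
\[
  \varphi_B(\coscrA(\ini_L\calH^c_B))
  \ =\ \varphi_F\bigl(\coscrA((\calH/L)^c)\bigr)\cdot\varphi_B\bigl(\bOne\times\coscrA((\calH|_L)^c)\bigr)
  \ =\ \coscrA(D_F)\cdot\varphi_B\bigl(\bOne\times\coscrA((\calH|_L)^c)\bigr)\,,
\]
with the first factor contained in $\TT_{L^\perp}=\ker\bigl(\pr\colon\TT_M\to\TT_{M/L^\perp}\bigr)$. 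Since $\pr$ kills that factor, a point $\phi=\phi_1\cdot\vartheta$ lies in the fiber over $\theta$ exactly when $\pr(\vartheta)=\theta$, which is precisely the asserted union over $\vartheta\in\varphi_B(\bOne\times\coscrA((\calH|_L)^c))$ with $\pr(\vartheta)=\theta$. This is essentially what Lemma~\ref{L:fibered} encodes after applying $\Arg$; you simply need to take the union over all allowed $\vartheta$ (equivalently, all $z$ above $\theta$), not a single lift. With that adjustment the argument is complete and agrees with the paper's intent. Your closing remark about distinguishing the image $\varphi_B(\coscrA(\ini_L\calH^c_B))$ from its closure $\pls_L(D_B)$ is a correct and worthwhile caution.
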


We show that $\pls_L(D_B)$  is a prism when $L$ is non-splitting and $\dim L > 1$.

\begin{theorem}\label{Th:main_components}
 Suppose that $A$ is nondefective with reduced discriminant $D_B$.
 Let $L$ be a non-splitting flat of $\calH_B$.
 Then the stratum $\pls_L(D_B)$ is the prism over the closed coamoeba of $D_{B|_L}$.
 When $\dim L>1$, this has dimension $d$.
\end{theorem}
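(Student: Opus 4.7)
The strategy is to reduce the prism assertion to the density of the generalized discriminant $D_F$, where $F \vcentcolon= \{b\in B : b|_L\equiv 0\}$ is the flat of $B$ corresponding to $L$, and then to establish that density using the non-splitting hypothesis.

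First I would invoke Corollary~\ref{C:factors-fibered}, which presents each fiber of $\pr\colon\pls_L(D_B)\to\overline{\coscrA(D_{B|_L})}$ as a union of translates of $\overline{\coscrA(D_F)}$ inside $\TT_{L^\perp}$. Thus, if I can establish that $\overline{\coscrA(D_F)}=\TT_{L^\perp}$, then each such fiber collapses to the full torus $\TT_{L^\perp}$ (any translate of a subgroup equal to $\TT_{L^\perp}$ is $\TT_{L^\perp}$ itself), whence $\pls_L(D_B)=\pr^{-1}(\overline{\coscrA(D_{B|_L})})$---by definition the prism over $\overline{\coscrA(D_{B|_L})}$ with fiber $\TT_{L^\perp}$.

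The crux is therefore showing that the Horn--Kapranov map $\psi_F$ is dominant (equivalently, $D_F$ is Zariski dense in $\CC^\times_{L^\perp}$) whenever $L$ is non-splitting. I would proceed by induction on $k\vcentcolon=\dim L^\perp$, using a non-splitting flag $\emptyset=F_0\subsetneq F_1\subsetneq\dotsb\subsetneq F_k=F$ inherited from the flag of $B$ witnessing non-splitting of $L$. The base case $k=1$ is immediate: the condition $\sum_{b\in F}b\neq 0$ makes $\psi_F$ a non-constant Laurent monomial in one variable. For the inductive step, I would pick a $\QQ$-basis $w_1,\dotsc,w_k$ of $L^\perp$ adapted to the flag (so that $w_1,\dotsc,w_j$ span $\mathrm{span}(F_j)$) and analyze the logarithmic Jacobian $G(x)=\sum_{b\in F}\tfrac{bb^T}{\ell_b(x)}$ of $\log\psi_F$. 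Letting $x_k\to\infty$ while holding $x'=(x_1,\dotsc,x_{k-1})$ generic, the block structure of $G$ has leading principal $(k{-}1)\times(k{-}1)$ minor tending to $G_{F_{k-1}}(x')$---nonzero by the inductive hypothesis applied to $F_{k-1}$ inside $\mathrm{span}(F_{k-1})$---while the Schur complement behaves asymptotically like $\tfrac{1}{x_k}\sum_{b\in F}b^k$. This last sum is nonzero because the non-splitting condition $\sum_{b\in F}b\notin\mathrm{span}(F_{k-1})$ forces the $w_k$-component of $\sum b$ to be nonzero. Hence $\det G\not\equiv 0$, $\psi_F$ is generically finite, and $D_F$ is dense as required.

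The hard part will be carrying out the block-Jacobian asymptotic analysis rigorously and organizing the induction so that the adapted rational basis interacts cleanly with the lattice structure of $L^\perp$; in particular, checking that the leading terms of the $O(1/x_k)$ expansion are genuinely the ones produced by $F\smallsetminus F_{k-1}$ and that no cancellation with the inductively controlled block occurs. The dimension count then follows routinely: when $\dim L>1$, Lemma~\ref{L:factors} makes $D_{B|_L}$ a reduced discriminant in $\CC^\times_{M/L^\perp}$ of associated dimension at least $2$, so Corollary~\ref{Co:dim_Discr_Coamoeba} gives $\dim\overline{\coscrA(D_{B|_L})}=\dim L$; combined with $\dim\TT_{L^\perp}=d-\dim L$, the prism has dimension $d$.
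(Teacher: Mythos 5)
Your proposal is correct in its reduction and in the Jacobian heuristic, but it takes a genuinely different route from the paper. Both arguments open with Corollary~\ref{C:factors-fibered}. Where you then aim to prove outright that the generalized discriminant $D_F$ is dense in $\CC^\times_{L^\perp}$ whenever $L$ is non-splitting, and deduce the prism structure in one step, the paper deliberately sidesteps that density statement: it observes (via Remark~\ref{R:parallel}) that density is trivial when $L$ is a hyperplane, and then handles a general non-splitting $L$ by enlarging to a refined flag $\{0\}\subset L\subset H\subset\CC_N$ with $H$ a non-splitting hyperplane, applying Lemma~\ref{L:calLinitial} to factor $\ini_\calL\calH^c_B=(\calH_B/H)^c\times\ini_L(\calH_{B|_H})^c$, and inducting on the ambient dimension $d$ through the restriction $D_{B|_H}$. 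This peels off one torus factor $\TT_{H^\perp}$ at a time, concluding that $\pls_\calL(D_B)$ is a prism over a prism and then that $\pls_L(D_B)=\pls_\calL(D_B)$ because both project onto $\overline{\coscrA(D_{B|_L})}$. Your route instead inducts on the rank of $F$ and establishes the stronger intermediate fact that $\psi_F$ is dominant.

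Your block-Jacobian argument does appear to go through: with a $\QQ$-basis of $L^\perp$ adapted to a non-splitting flag $\emptyset\subsetneq F_1\subsetneq\cdots\subsetneq F_k=F$, the Gram-type matrix $G=\sum_{b\in F}\tfrac{b^Tb}{\ell_b(y)}$ has $(k-1)$-block tending to $G_{F_{k-1}}$ as $y_k\to\infty$ (nonsingular generically by induction, since the truncated flag witnesses $F_{k-1}$ as non-splitting), the off-diagonal entries are $O(1/y_k)$, and the Schur complement is asymptotically $\tfrac{1}{y_k}\sum_{b\in F}b_k$, nonzero precisely because $\sum_{b\in F}b\notin\mathrm{span}(F_{k-1})$, so $\det G\not\equiv0$. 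The trade-off is that the paper's argument runs entirely through the categorical machinery already set up in Sections~\ref{S:Beginnings} and~\ref{S:LinearSpace} (initial schemes of flags, the exact sequence of tori, closedness of $\pr$) and needs no analytic estimate; yours requires the asymptotic analysis you correctly flag as the hard part, but in exchange yields a self-contained, quantitative dominance criterion for $\psi_F$ that is not explicitly recorded in the paper. One small caution: make sure, as you set up the induction, that each $F_j$ is genuinely a flat (closed set) of the matroid, since the non-splitting conditions and the block structure both rely on $b_k=0$ for all $b\in F_{k-1}$, which fails for non-flat subsets of $F$.
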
  

As we show in Lemma~\ref{L:type_1} below, if $\dim L=1$, then $\dim\pls_L(D_B)< d$.

\begin{proof}
  First suppose that $L$ is a hyperplane of $\calH_B$.
  Then the corresponding flat $F$ of $B$ is the parallel-equivalence
  class $[b]$ of any $b\in B$ that defines $L$, and $L^\perp\simeq\ZZ$.
  Since $L$ is a hyperplane, $(\calH/L)^c=(\CC_N/L)^\times\simeq\CC^\times$.
  As $L$ is non-splitting, $0\neq \sum_{c\in[b]}c$, and  by  Remark~\ref{R:parallel} $\varphi_F$ is not a
  constant map, which shows that $D_F=\CC^\times_{L^\perp}\simeq\CC^\times$.
  Thus $\coscrA(D_F)=\TT_{L^\perp}$ and by Corollary~\ref{C:factors-fibered},
  $\pls_L(D_B) = \pr^{-1}(\overline{\coscrA(D_{B|_L})})$ is a prism.
  Consequently, when $\dim L>1$, so that $d>2$, this has dimension $1+\dim\coscrA(D_{B|_L})=1+(d{-}1)=d$.

  Suppose now that $L$ is not a hyperplane.
  Let $H$ be the hyperplane of $\calH_B$ in some non-splitting flag containing $L$.
  Then $H$ is a non-splitting hyperplane.
  Consider the flag $\defcolor{\calL}\colon \{0\}\subset L \subset H\subset \CC_N$.
  Since $\ini_{\calL}\calH_B^c$ is an initial scheme of both $\ini_L\calH_B$ and $\ini_H\calH^c_B$, we have the containment
  of phase limit sets, 
  $\pls_\calL(D_B)\subset\pls_L(D_B)$ and  $\pls_\calL(D_B)\subset\pls_H(D_B)$.
  We will use induction on the dimension $d$ and the case of the hyperplane $H$ to show that $\pls_\calL(D_B)$ is the prism
  over the closure of $\coscrA(D_{B|_L})$, which implies that $\pls_L(D_B)=\pls_\calL(D_B)$ and is thus also the same prism.
  The assertion on dimension follows from Corollary~\ref{Co:dim_Discr_Coamoeba}.

  Let $G\subset F\subset B$ be the flag of flats in $B$ corresponding to $L\subset H$.
  We have that $G=[b]$, the parallel-equivalence class of any element $b\in B$ that defines the hyperplane $H$.
  Then $B|_H$ is the image of $B{\smallsetminus}G$ in $M/H^{\perp}$.
  Restricting a non-splitting flag containing $L$ and $H$ to $H$ shows that $L$ is a non-splitting flat of the hyperplane arrangement
  $\calH_{B|_H}$.
  Note that $B|_L=(B|_H)|_L$.
  Applying the induction hypothesis, $\pls_L(D_{B|_H})$ is the prism in $\TT_{M/H^\perp}$ over closed coamoeba of
  $D_{B|_L}$ (which is a subset 0f $\TT_{M/L^\perp}$), and if $\dim L>1$, then it has dimension $d{-}1$.

  By Lemma~\ref{L:calLinitial} and~\eqref{Eq:calH(calL)}, and again using that $B|_L=(B|_H)|_L$, 
  \[
  \ini_\calL\calH_B^c\ =\ 
  \left(\calH_B/H\right)^c  \times\left( (\calH_{B|_H})/L\right)^c\times \left(\calH_{B|_L}\right)^c\,,
  \]
  which  is $ (\calH_B/H)^c \times\ini_L(\calH_{B|_H})^c$.
  Since $\pls_\calL(D_B)$ is the closure of $\coscrA(\varphi_B(\ini_\calL\calH_B^c))$, we see that the image of
  $\pls_\calL(D_B)\subset\TT_M$ in $\TT_{M/H^\perp}$ is $\pls_L(D_{B|_H})$.
  By Corollary~\ref{C:factors-fibered}, the fiber contains a translate of $\coscrA(D_G)=\TT_{H^\perp}\simeq\TT$, as
  $D_G=\CC^\times_{H^\perp}\simeq\CC^\times$.
  Thus
  \[
  \pls_\calL(D_B)\ =\ \pr^{-1}\left(\pls_L(D_{B|_H})\right)\,,
  \]
  so that it is the prism over $\pls_L(D_{B|_H})$, which is itself the prism over the closed coamoeba of
  $D_{(B|_H)|_L}=D_{B|_L}$. 
  Since a prism over a prism is a prism, we conclude that $\pls_\calL(D_B)$ is the prism over the closed coamoeba of $D_{B|_L}$,
  which completes the proof of the theorem.
\end{proof}

\subsection{The phase limit set of a reduced discriminant}\label{SS:tropDiscr}

The tropical variety of $D_B$ is the \demph{tropical discriminant} \defcolor{$\trop(D_B)$}~\cite{DFS}.
This is the image of the Bergman fan $\Berg(B)$ under the linear map
$\varphi_B\colon \RR^B\to\RR_M$ given by 
 \[
   \varphi_B\ \colon\ (x_b\mid b\in B)\ \longmapsto\ \sum x_b\, b\,.
 \]
Since $\sum \{b\mid b\in B\}=0$, the subspace $\RR\bOne_B$ of the lineality space of $\Berg(B)$ lies in the kernel of
$\varphi_B$ so that the map to the tropical discriminant factors through $\Berg(B)/\RR\bOne_B$.

By Lemma~\ref{L:rays}, $\pls(D_B)$ is the union of strata arising from initial
schemes of $D_B$ given by the rays in any fan structure on the tropical discriminant.
Before studying the tropical discriminant in more detail, we consider our running example.

\begin{example} \label{Ex:Tdiscr}
 Figure~\ref{F:tdiscr} shows two views of the tropical discriminant $\trop(D_B)\subset\RR^3$ for the vectors of
 Example~\ref{Ex:RunningI}.
 \begin{figure}[htb]
 \centering
  \begin{picture}(195,158)(-11,0)
    \put(0,0){\includegraphics[height=148pt]{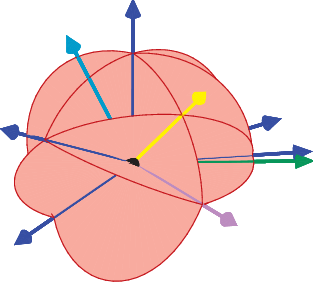}}
    \put(70,149){$b_3$}
    \put(23,134){$b_{236}$}  
       \thicklines \put(103,125){\White{\line(0,-1){20}}} \thinlines
     \put(100,130){$\rho$}\put(103,125){\vector(0,-1){23}}
    \put(-11,79){$b_6$}           \put(138,90){$b_2$}  \put(157,75){$b_4$} 
             \put(110,20){$b_1$}  
      \put(149,48.5){$b_{124}$} 
    \put(0,10){$b_5$}
    \end{picture} \qquad
    \begin{picture}(186,158)(-20,0)
    \put(0,0){\includegraphics{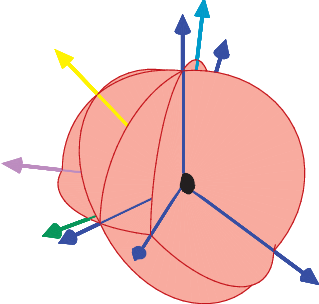}}
    \put(73,138){$b_3$}      \put(93,151){$b_{236}$} 
    \put(20,125){$\rho$}    \put(113,120){$b_6$} 

    \put(-2,74){$b_1$} 
     \put(12,42){$b_{124}$}  
     \put(29,18.5){$b_4$} 
     \put(156,10){$b_5$}
     \thicklines
        \put(48.7,5.3){\White{\vector(1,1){13}}}
        \put(48.3,5.7){\White{\vector(1,1){15.1}}}
        \put(48,6){\White{\vector(1,1){15.5}}}
        \put(47.7,6.3){\White{\vector(1,1){15.1}}}
        \put(47.3,6.7){\White{\vector(1,1){13}}}
     \thinlines
    \put(40,0){$b_2$} \put(48,6){\vector(1,1){15}}
 \end{picture}
%
  \caption{Two views of the tropical discriminant $\trop(D_B)$.}
  \label{F:tdiscr}
  \end{figure}
  Rays are labeled by spanning vectors.
  Among them are the six rays $\RR_{\geq 0} b_i$ for $i=1,\dotsc,6$ corresponding to the vectors $b_i$, which
  are the images of the rays $\RR_{\geq 0}e_i$ of $\Berg(B)$.
  Recall from Figure~\ref{F:edge_graph} that the Bergman fan $\Berg(B)$ has two more rays spanned by the vectors
  $e_{124}$ and $e_{236}$. 
  Write \defcolor{$b_{124}$} and \defcolor{$b_{236}$} for their images under $\varphi_B$.
  Notice that
  \[
    b_{124}\ \vcentcolon=\ \varphi_B(e_{124})\ =\ b_1+b_2+b_4\ =\ (2,3,0)\,,
  \]
  and the cones from $\Berg(B)$ meeting $e_{124}$ (spanned by $e_{124}$ together with one of $e_1$, $e_2$, and $e_4$) map onto the two
  coplanar cones spanned by $b_1,b_4$ and by $b_4,b_2$, with $b_{124}$ lying in the first.
  Similarly, the three cones of $\Berg(B)$ meeting $e_{236}$ have coplanar image with $b_{236}$ lying in the cone spanned by
  $b_3$ and $b_6$.

  Any fan structure on $\trop(D_B)$ contains the ray $\defcolor{\rho}$ through the point $(1,0,1)$, which is the
  intersection of the images of the cones $\sigma_{13}$ and $\sigma_{46}$. 
  We will see that coamoebas of initial schemes for $\rho$, $b_{124}$,  and $b_{236}$ all lie in the
  union of other components of $\pls(D_B)$. \hfill$\diamond$
\end{example} 

Let us consider fan structures on $\trop(D_B)$ such that for every cone $\sigma$ of $\Berg(B)$, $\varphi_B(\sigma)$ is a
union of cones of $\trop(D_B)$.
Fix one such fan structure on $\trop(D_B)$, and let $\rho$ be one of its rays.
Either $\rho$ is the image of a ray of $\Berg(B)$ or it is not.
Recall that the rays of $\Berg(B)$ correspond to flacets of $\calH_B$.
The ray corresponding to a flacet $L$ is generated by the indicator function $\omega_L\in \RR^B$ of $L$.
Let $\defcolor{b_L}\vcentcolon=\varphi_B(\omega_L)$, which is the sum of the elements $b$ of $B$ that vanish along $L$.
When $b_L\neq 0$, the image of this ray of $\Berg(B)$ is a ray of $\trop(D_B)$.
A consequence of this discussion is the following lemma.

\begin{lemma}\label{L:Ray_classification}
  Rays $\rho$ of $\trop(D_B)$ are one of the following two types.
\begin{enumerate}
  \item  $\rho=\RR_{\geq 0}b_L$, where $L$ is a flacet of $\calH_B$ with $b_L\neq 0$.
  \item  $\rho$ is not the image of a ray of $\Berg(B)$.
\end{enumerate}
   
\end{lemma}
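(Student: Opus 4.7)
The plan is straightforward because the dichotomy as stated is essentially tautological---any ray of $\trop(D_B)$ either is, or is not, the image of some ray of $\Berg(B)$---so the substantive content to prove is the explicit description $\rho=\RR_{\geq 0}b_L$ in case (1).

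First I would unpack the standing hypothesis on the fan structure. Recall that we fixed a fan structure on $\trop(D_B)$ with the property that $\varphi_B(\sigma)$ is a union of cones of $\trop(D_B)$ for every cone $\sigma$ of $\Berg(B)$. Applied to a ray $\tau$ of $\Berg(B)$, the image $\varphi_B(\tau)$ is at most one-dimensional modulo the lineality space of $\trop(D_B)$, hence is either contained in that lineality space or is a single ray of $\trop(D_B)$. In particular, a ray $\rho$ of $\trop(D_B)$ is the image of a ray $\tau$ of $\Berg(B)$ if and only if $\rho=\varphi_B(\tau)$.

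Next I would invoke the Feichtner--Sturmfels classification cited just before the lemma: when $B$ is connected, every ray of $\Berg(B)$ is generated by the indicator function $\omega_L$ of some flacet $L$ of $\calH_B$; the disconnected case reduces to this one via the product decomposition in Lemma~\ref{L:easyConsequence}. Combining this with the previous step, if $\rho$ is a ray of $\trop(D_B)$ falling in the first alternative of the dichotomy, then $\rho=\varphi_B(\RR_{\geq 0}\omega_L)=\RR_{\geq 0}\varphi_B(\omega_L)=\RR_{\geq 0}b_L$ for a flacet $L$. Since $\rho$ is by hypothesis a proper (one-dimensional) cone modulo the lineality space, the vector $b_L$ must be nonzero, yielding case (1). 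The remaining rays are exactly those in case (2).

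There is no genuine obstacle here; the argument is bookkeeping that combines the Feichtner--Sturmfels description of Bergman rays with the compatibility assumption built into the chosen fan structure on $\trop(D_B)$. The only mild subtlety is noting that a ray $\RR_{\geq 0}\omega_L$ of $\Berg(B)$ with $b_L=0$ maps into the lineality space and therefore does \emph{not} produce a ray of $\trop(D_B)$, which is precisely why the condition $b_L\neq 0$ appears in the statement of case (1).
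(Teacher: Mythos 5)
Your proposal is correct and follows essentially the same route as the paper: fix a compatible fan structure, observe the tautological dichotomy, invoke the Feichtner--Sturmfels description of rays of $\Berg(B)$ as generated by indicator functions $\omega_L$ of flacets, and compute $\varphi_B(\RR_{\geq 0}\omega_L)=\RR_{\geq 0}b_L$, noting that $b_L\neq 0$ is exactly the condition for this image to be a genuine ray rather than collapse into the lineality space. The only addition you make beyond the paper's short discussion is the explicit remark that the disconnected case reduces to the connected one; the paper implicitly works with connected $B$.
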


Rays $\rho$ of $\trop(D_B)$ have type  (1) or (2), according to this classification.

\begin{lemma}\label{L:type_2}
  If a ray $\rho$ of  $\trop(D_B)$ has type $(2)$, then the component $\pls_\rho(D_B)$ lies in the union of
  components $\pls_\tau(D_B)$ for $\tau$ a ray of type $(1)$. 
\end{lemma}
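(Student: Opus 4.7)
The plan is to apply Lemma~\ref{L:RayLemma} to the dominant map $\varphi_B\colon\calH^c_B\to D_B$, equipping $\trop(\calH^c_B)$ with the Bergman fan structure $\Berg(B)$ and $\trop(D_B)$ with a fan structure (refined if necessary) in which $\RR_{\geq 0}b_L$ is a ray for every flacet $L$ with $b_L\neq 0$. The lemma then gives
\[
   \pls_\rho(D_B)\ \subset\ \bigcup_{\sigma\in\varphi_B^*(\rho)} \varphi_B(\pls_\sigma(\calH^c_B))\,,
\]
the union over cones $\sigma$ of $\Berg(B)$ whose image under $\varphi_B$ meets $\rho$ in its relative interior. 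Since $\rho$ has type~(2), no such $\sigma$ can be a ray of $\Berg(B)$, so $\dim\sigma\geq 2$.

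The crux is to exhibit, for each such $\sigma$, a ray $\omega_L$ of $\sigma$ with $b_L\vcentcolon=\varphi_B(\omega_L)\neq 0$. The cone $\varphi_B(\sigma)$ is the positive hull of the images $b_{L_i}$ of the ray generators $\omega_{L_i}$ of $\sigma$; since $\rho\neq 0$ lies in $\varphi_B(\sigma)$, at least one $b_{L_i}$ is nonzero. In fact at least two must be, for otherwise $\varphi_B(\sigma)=\RR_{\geq 0}b_{L_i}$ would equal $\rho$, making $\rho$ the image of the Bergman ray $\omega_{L_i}$ and contradicting that $\rho$ has type~(2). Fix such a flacet $L$ and set $\tau\vcentcolon=\RR_{\geq 0}b_L$, a type~(1) ray of $\trop(D_B)$ by our choice of fan structure.

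Because $\omega_L$ is a face of $\sigma$, equation~\eqref{Eq:facesOfFaces} gives $\ini_\sigma\calH^c_B=\ini_\sigma(\ini_L\calH^c_B)$, so $\coscrA(\ini_\sigma\calH^c_B)\subset\overline{\coscrA(\ini_L\calH^c_B)}=\pls_L(\calH^c_B)$, and taking closures yields $\pls_\sigma(\calH^c_B)\subset\pls_L(\calH^c_B)$. Applying $\varphi_B$ and invoking the identity $\pls_L(D_B)=\varphi_B(\pls_L(\calH^c_B))$ (from Proposition~\ref{P:easyTropicalFunctoriality}\eqref{easyTropicalFunctoriality.3}) gives $\varphi_B(\pls_\sigma(\calH^c_B))\subset\pls_L(D_B)$. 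Finally, Lemma~\ref{L:ConeLemma} applied to the ray $\tau$, noting that $\omega_L$ maps onto the relative interior of $\tau$, shows $\varphi_B(\ini_L\calH^c_B)\subset\ini_\tau D_B$, hence $\pls_L(D_B)\subset\pls_\tau(D_B)$. Taking the union over $\sigma\in\varphi_B^*(\rho)$ then realizes $\pls_\rho(D_B)$ as a subset of a union of strata $\pls_\tau(D_B)$ for type~(1) rays $\tau$.

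The main obstacle is bookkeeping the fan structures: $\varphi_B$ need not be a map of fans from $\Berg(B)$ to $\trop(D_B)$, so both Lemma~\ref{L:RayLemma} and Lemma~\ref{L:ConeLemma} implicitly require refinements, which we handle via Lemma~\ref{L:refiningFan} to ensure that the refined strata coincide with their parents. Once that is cleared, the combinatorial heart of the argument is simply the pigeonhole observation that a positive hull containing a nonzero ray $\rho$ must have at least one—in fact at least two—of its generators map to nonzero vectors.
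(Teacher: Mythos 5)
Your proof is correct and follows essentially the same route as the paper: both apply Lemma~\ref{L:RayLemma} to express $\pls_\rho(D_B)$ as a union of images $\varphi_B(\pls_\sigma(\calH^c_B))$, then for each contributing cone $\sigma$ of $\Berg(B)$ (which cannot be a ray) locate a ray of $\sigma$ whose flacet $L$ has $b_L\neq 0$, and descend via the containment $\pls_\sigma(\calH^c_B)\subset\pls_L(\calH^c_B)$. One small remark: your observation that \emph{at least two} of the $b_{L_i}$ must be nonzero is not needed — the paper only uses that at least one is nonzero, and you use only one as well; the stronger claim (while correct) is a detour that might suggest the count matters. You are slightly more explicit than the paper about the fan-refinement bookkeeping and about why $\pls_L(D_B)\subset\pls_\tau(D_B)$ for $\tau=\RR_{\geq 0}b_L$, which is a genuine improvement in transparency but not a different argument.
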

\begin{proof}
 Let $\sigma$ be a cone of $\Berg(B)$ such that $\rho\subset\varphi_B(\sigma)$.
 Then $\sigma$ is not a ray, as $\rho$ has type (2).
 Let $\tau\subset\sigma$ be a ray of $\sigma$.
 Then $\tau$ corresponds to a flacet $L$ of $\calH_B$.
 As  $\ini_\sigma \calH^c_B$ is an initial scheme of $\ini_\tau \calH^c_B=\ini_L\calH^c_B$,
 we have $\coscrA(\ini_\sigma\calH^c_B) \subset \overline{\coscrA(\ini_L\calH^c_B)}$, and thus
 $\pls_\sigma(\calH^c_B)\subset\pls_L(\calH^c_B)$.
 Applying $\varphi_B$, shows that
  \begin{equation}\label{Eq:pls_containment}
    \varphi_B(\pls_\sigma(\calH^c_B))\ \subset\ \varphi_B(\pls_L(\calH^c_B))\,.
  \end{equation}
 Suppose that the flacet $L$ corresponding to $\tau$ has $\varphi_B(\omega_L)=0$.
 Since $\omega_L$ spans $\tau$, $\varphi_B(\tau)=0$.
 As $\rho\subset \varphi_B(\sigma)$, and the cone $\sigma$ is generated by its rays, it has a ray $\tau$ with  $\varphi_B(\tau)\neq 0$.
 If $L$ is the flacet corresponding to the ray $\tau$, then we have~\eqref{Eq:pls_containment}.

 Lemma~\ref{L:RayLemma} applied to the homomorphism $\varphi_B$
 implies that $\pls_\rho(D_B)$ is the union of images $\varphi_B(\pls_\sigma(\calH^c_B))$ for cones $\sigma$ of $\Berg(B)$
 such that $\rho\subset\varphi_B(\sigma)$.
 Each such cone $\sigma$ has a ray corresponding to a flacet $L$ with  $\varphi_B(\omega_L)\neq 0$,
 and for this flacet we have~\eqref{Eq:pls_containment}.
 This completes the proof.
\end{proof}

\begin{lemma}\label{L:type_1}
  Let $L$ be a one-dimensional flat of $\calH_B$.
  Then the stratum $\pls_{L}(D_B)$ corresponding to $L$ has dimension at most $d{-}1$.  
\end{lemma}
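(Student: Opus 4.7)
The plan is to exhibit $\pls_L(D_B)$ as a subset of a single fiber of the projection $\pr\colon\TT_M\twoheadrightarrow\TT_{M/L^\perp}$, which is a coset of the $(d{-}1)$-dimensional subtorus $\TT_{L^\perp}$, giving the bound immediately.

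First I would record the basic ranks. Since $L$ is a flat of dimension one, $L=\calV(F)$ for $F=\{b\in B:b|_L\equiv 0\}$, so the elements of $F$ span $\QQ_F=\QQ_{L^\perp}$ and hence $L^\perp$ has rank $d{-}1$ in $M$. Consequently $M/L^\perp$ has rank one and both $\CC^\times_{M/L^\perp}$ and $\TT_{M/L^\perp}$ are one-dimensional.

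Next I would invoke Corollary~\ref{C:factors-fibered} applied to the projection $\pr$ from the diagram~\eqref{Eq:cDiagram}, which gives $\pr(\pls_L(D_B))\subseteq\overline{\coscrA(D_{B|_L})}$. The central step is then to show that $D_{B|_L}$ reduces to a single point of $\CC^\times_{M/L^\perp}$. Fixing a generator $\ell$ of $L$, the complement $(\calH|_L)^c=\lambda_{F^c}(L\setminus\{0\})\subset (\CC^\times)^{F^c}$ is parameterized by $s\mapsto s\cdot\lambda_{F^c}(\ell)$ for $s\in\CC^\times$, and composing with $\varphi_B(\bOne\times\cdot)$ yields
\[
  \prod_{b\in F^c}\bigl(s\langle b,\ell\rangle\bigr)^{b}
   \;=\; s^{\sum_{b\in F^c}b}\cdot\prod_{b\in F^c}\langle b,\ell\rangle^{b}
   \;\in\;\CC^\times_M.
\]
Because $\sum_{b\in B}b=0$ and $F\subseteq L^\perp$, the cocharacter exponent $\sum_{b\in F^c}b=-\sum_{b\in F}b$ lies in $L^\perp$, so $\pr(s^{\sum_{F^c}b})$ is trivial and the projected image is constant in $s$. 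Hence $D_{B|_L}$, and thus $\overline{\coscrA(D_{B|_L})}$, is a single point.

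Finally, since $\pr(\pls_L(D_B))$ collapses to one point, $\pls_L(D_B)$ is contained in the fiber $\pr^{-1}(\mathrm{pt})$, a coset of $\TT_{L^\perp}$ in $\TT_M$ of real dimension $\mathrm{rk}(L^\perp)=d{-}1$, which gives $\dim\pls_L(D_B)\le d{-}1$. No serious obstacle is anticipated; the only substantive point is noticing that the $s$-dependence disappears after projection precisely because $F\subseteq L^\perp$ and $B$ sums to zero, and everything else is a direct application of the machinery already established in Section~\ref{Sec:component_identification}.
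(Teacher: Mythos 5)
Your proof is correct and follows the same route as the paper: both apply Corollary~\ref{C:factors-fibered} to push $\pls_L(D_B)$ forward along $\pr\colon\TT_M\to\TT_{M/L^\perp}$, observe that the target $D_{B|_L}$ lives in a one-dimensional torus and is a single point, and conclude that $\pls_L(D_B)$ lies in a fiber of $\pr$, which is a coset of the $(d{-}1)$-dimensional torus $\TT_{L^\perp}$. The one place you add genuine content is the explicit verification that $D_{B|_L}$ is a point: you parameterize $(\calH|_L)^c$ by $s\mapsto\lambda_{F^c}(s\ell)$ and check that the $s$-dependence $s^{\sum_{b\in F^c}b}$ dies under $\pr$ because $\sum_{b\in F^c}b=-\sum_{b\in F}b\in L^\perp$. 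The paper states this point-ness as an immediate consequence of $\dim L=1$ without spelling out the reason (one could alternatively cite the last paragraph of the proof of Proposition~\ref{P:RedDiscr}, which shows the image of any Horn--Kapranov type map $\varphi_{F^c}$ with $\sum\bar b=0$ into $\CC^\times_{M/L^\perp}\simeq\CC^\times$ has dimension at most $0$). So your argument is the same in structure, just with the key step made explicit rather than asserted.
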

\begin{proof}
  By Corollary~\ref{C:factors-fibered}, if $\pr\colon\CC^\times_M\to \CC^\times_{M/L^\perp}\simeq\CC^\times$, then
  $\pr(\pls_L(D_B))\subset\coscrA(D_{B|_L})$.
  As $L$ is 1-dimensional, $D_{B|_L}$ and its coamoeba are points.
  Thus $\pls_L(D_B)$ is a subset of a fiber of $\pr$ and thus has  dimension at most $d{-}1$.  
\end{proof}

A flacet $L$ is \demph{essential} if $\dim L > 1$ and $0\neq b_L = \varphi_B(\omega_L)$.
Let \defcolor{$\TOP(D_B)$} be the union of $d$-dimensional components of $\pls(D_B)$.
We have proven the following result.

\begin{theorem}\label{Thm:PLS_Discriminant}
  When $D_B$ is a reduced discriminant, we have $\TOP(D_B)=\bigcup\pls_L(D_B)$, the union over essential flacets $L$ of $\calH_B$.
\end{theorem}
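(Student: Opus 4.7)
The plan is to obtain the theorem by assembling the structural lemmas just proved together with Theorem~\ref{Th:main_components}. The argument is organizational: classify the rays of $\trop(D_B)$ whose strata can contribute $d$-dimensional pieces to $\pls(D_B)$, and identify these with essential flacets.

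First I would use Lemma~\ref{L:rays} to write $\pls(D_B)=\bigcup_\rho \pls_\rho(D_B)$ over rays $\rho$ of a fixed fan structure on $\trop(D_B)$. Lemma~\ref{L:Ray_classification} partitions the rays into type (1), spanned by $b_L$ for some flacet $L$ of $\calH_B$ with $b_L\neq 0$, and type (2), not the image of a ray of $\Berg(B)$. Lemma~\ref{L:type_2} then shows every type (2) stratum lies inside the union of type (1) strata, so
\[
   \pls(D_B)\ =\ \bigcup \pls_L(D_B)\,,
\]
the union being over flacets $L$ of $\calH_B$ with $b_L\neq 0$.

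Passing to the $d$-dimensional part to isolate $\TOP(D_B)$, Lemma~\ref{L:type_1} eliminates the one-dimensional flacets, whose strata have dimension at most $d-1$. The flacets that survive in $\TOP(D_B)$ are exactly those with $\dim L>1$ and $b_L\neq 0$, namely the essential flacets, so $\TOP(D_B)\subseteq\bigcup_{L\text{ essential}}\pls_L(D_B)$. For the reverse containment, Theorem~\ref{Th:main_components} asserts that when $L$ is non-splitting with $\dim L>1$ the stratum $\pls_L(D_B)$ is a prism over $\overline{\coscrA(D_{B|_L})}$ of dimension exactly $d$, hence lies in $\TOP(D_B)$.

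The step I expect to be the main obstacle is connecting the matroidal notion of essential flacet (the non-vanishing condition on $b_L$) to the non-splitting flat condition required to invoke Theorem~\ref{Th:main_components} in the nondefective setting. Concretely, one must show that whenever $L$ is a flacet of $\calH_B$ with $\dim L>1$ and $b_L\neq 0$, the one-element chain $\{F\}$ with $F=\{b\in B: b|_L\equiv 0\}$ can be extended to a complete flag of flats of $B$ satisfying~\eqref{Eq:CC_char}. Granting this matroidal extension, the two inclusions combine to yield the stated equality.
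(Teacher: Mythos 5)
Your proposal follows the paper's argument almost step for step in the forward direction: apply Lemma~\ref{L:rays} to reduce to rays of $\trop(D_B)$, invoke Lemma~\ref{L:Ray_classification} to sort rays into type (1) and type (2), use Lemma~\ref{L:type_2} to absorb type (2) strata into type (1) ones, and then apply Lemma~\ref{L:type_1} to discard the rank-one (one-dimensional) flacets as contributing only strata of dimension at most $d-1$. That chain gives $\TOP(D_B)\subseteq\bigcup_{L\text{ essential}}\pls_L(D_B)$ exactly as the paper intends when it says ``We have proven the following result.''

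Where your proposal diverges from the paper is in how it would establish the reverse inclusion. You propose to invoke Theorem~\ref{Th:main_components}, which only applies to \emph{non-splitting} flats, and then note that one must show every essential flacet is non-splitting. You are right to flag this as the sticking point, but be aware that the paper does not take this route and in fact explicitly disclaims it: just before Theorem~\ref{Thm:PLS_Discriminant} the authors write that they ``do not know how to relate non-splitting flats and essential flacets, except when $d=3$.'' So the paper's own proof does not pass through Theorem~\ref{Th:main_components} for general $d$; it relies entirely on the three lemmas, which directly give only the forward containment. Put differently, the reverse inclusion $\bigcup_{L\text{ essential}}\pls_L(D_B)\subseteq\TOP(D_B)$ would require that $\pls_L(D_B)$ is $d$-dimensional for \emph{every} essential flacet $L$, and this the paper establishes only for non-splitting flats with $\dim L>1$ (via Theorem~\ref{Th:main_components}) or in the special case $d=3$ (where all essential flacets are hyperplanes and are automatically non-splitting, as used in the proof of Theorem~\ref{Th:conjecture}). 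Your instinct that the essential-vs-non-splitting comparison is the genuine obstacle is correct; it is not a step that can be ``granted,'' because the authors themselves leave it open. If you want to match what the paper can actually prove, confine the argument to the forward inclusion together with the observation that, when $L$ is both essential and non-splitting, Theorem~\ref{Th:main_components} shows $\pls_L(D_B)$ is a $d$-dimensional prism.
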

\begin{proof}[Proof of Theorem~\ref{Th:conjecture}]
  When $d=3$, the essential flacets of $\calH_B$ are hyperplanes $H$ with $b_H\neq 0$.
  All are non-splitting.
  Theorem~\ref{Th:conjecture} follows from Theorems~\ref{Thm:PLS_Discriminant} and~\ref{Th:main_components}.
\end{proof}

\begin{example}\label{Ex:facial_discriminants}
  We end with our running example, illustrating some results of this section.
  Figure~\ref{F:polytopes} shows three views of the Newton polytope of the discriminant~\eqref{Eq:bigDiscrim}.
  \begin{figure}[htb]
    \centering
    \includegraphics[height=90pt]{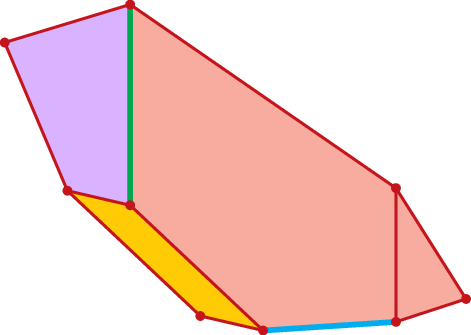}\qquad
    \includegraphics[height=100pt]{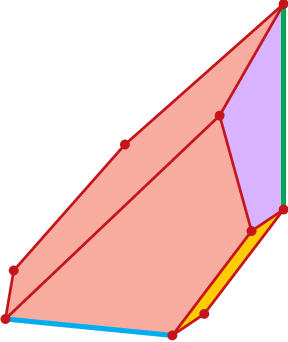}\qquad
    \includegraphics[height=100pt]{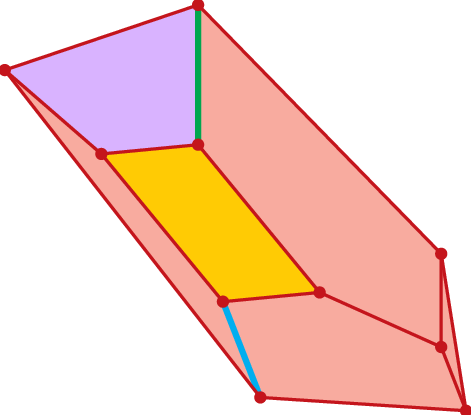}

    \caption{Three views of the Newton polytope of the discriminant.}
    \label{F:polytopes}
  \end{figure}
  Each cone of the tropical discriminant of Figure~\ref{F:tdiscr} consists of inner normals to a face of the Newton polytope, and 
  faces corresponding to the rays $b_1$, $b_{124}$, $b_{236}$, and $\rho$ are color-coded to correspond to Figure~\ref{F:tdiscr}.
  An initial scheme of the discriminant is defined by the restriction of the discriminant to the corresponding face, which
  we call the \demph{initial form}.
  We give these for these four rays.
  \begin{eqnarray*}
    \ini_{b_1}D_B &=& q^2r^2(3125q^2r^2-1024r^3+4000qr^2+768r^2-200qr-192r+16q+16)\\
   \ini_{b_{124}}D_B &=& 16q^2r^2 - 1024q^2r^5\ =\ 16q^2r^2(1-64r^3)\\
   \ini_{b_{236}}D_B &=& 16p^2q^2 - 1024p^5q^2\ =\ 16p^2q^2(1-64p^3)\\
    \ini_\rho D_B &=& 16q^3r^2 + 16q^2r^2 + 16p^2q^2 + 16p^2q\ =\ 16q(q+1)(qr^2+p^2)
 \end{eqnarray*}
  %
%

%
  Rays $b_{124}$ and $b_{236}$ are images of rays of $\Berg(B)$ corresponding to one-dimensional flacets
  (points $p_{124}$ and $p_{236}$ in Figure~\ref{F:RunningExampleI}), which are not essential.
  Their initial forms are binomials, which implies that their coamoebas are 2-dimensional.
  This illustrates Lemma~\ref{L:type_1}.

  The ray $\rho$ occurs as the intersection of the images $\varphi_B(\sigma_{13})$ and $\varphi_B(\sigma_{46})$ of two
  2-dimensional cones of $\Berg(B)$, subdividing both.
  The ray $\rho$ exposes the parallelogram facet of the Newton polytope of $D_B$.
  Its  edges are exposed by the subdivided cones, with the two parts of each exposing parallel edges,
  and the corresponding initial forms for the parallel edges differ only by a monomial factor.
  The initial form $\ini_\rho D_B$ factors (up to monomials) as the product of forms for each pair of parallel edges.
  As $\ini_\rho D_B$ is the product of binomials, its coamoeba is two-dimensional.
  This illustrates Lemma~\ref{L:type_2}, particularly its proof.

  Consider $\ini_{b_1}D_B$.
  Let $\defcolor{H}\in\calH_B$ be the hyperplane defined by $b_1$.
  Then $B|_H$ consists of the rows of the matrix below.
  We also give the parametrization $\psi\colon \CC^2\smallsetminus\calH_{B|_H}\to(\CC^\times)^2$ of $D_{B|_F}$.
  \[
  B|_H\ =\ B/b_1\ =\
  \left( \begin{array}{cc}1&0\\0&1\\2&0\\-1&-2\\-2&1\end{array}\right)
    \qquad
    (y,z)\ \stackrel{\psi}{\longmapsto}\
    \left( \frac{y(2y)^2}{(-y-2z)(-2y+z)^2}\,,\, \frac{ z(-2y+z)}{(-y-2z)^2} \right)\ .
  \]
  In coordinates $q,r$ for $\CC^2$, the reduced discriminant $D_{B|_H}$ has implicit equation
  \[
     D_{B|_H}\ =\ 3125q^2r^2-1024r^3+4000qr^2+768r^2-200qr-192r+16q+16\,,
  \]
  which is the non-monomial factor of $\ini_{b_1} D_B$.

  Consider the Newton polytope $P_H$ of  $D_{B|_H}$ and the configuration $F_H$, where we replace parallel vectors of $B|_H$
  by their sum.
  By Remark~\ref{R:parallel}, this does not change the coamoeba.
  \begin{equation}\label{Eq:F_H}
   \raisebox{5pt}{\includegraphics{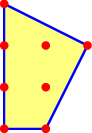}}
  \qquad
  \begin{picture}(128,74)(-14,-5)
    \put(0,0){\includegraphics{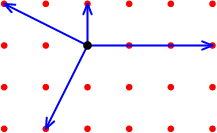}}
    \put( 46,60){$f_1$}
    \put(105,39){$f_3$}
    \put(  9,-2){$f_2$}
    \put(-11,60){$f_4$}
  \end{picture}
  \end{equation}
 While $0=f_1+\dotsb+f_4$, note that there are other integer linear relations they satisfy,
 \[
 6 f_1 + 3 f_2 +  f_3 + 0\cdot f_4\ =\ 0\cdot f_1 + 3 f_2 + 5 f_3 + 6 f_4\ =\ 0 \,.
 \]
 Let $\defcolor{d_H}=6$, which is the length (area) of the convex hull of any Gale dual of $F_H$; this is the difference of
 the minimum and maximum  coefficient in the these two relations.
  
 The coamoeba of a discriminant when $d=3$ is described in~\cite{DFS,PS}.
 We explain this for the coamoeba of $D_{F_H}$.
 It is most propitious to view this in the universal cover of $\TT^2=\RR^2/(2\pi\ZZ)^2$, where the coamoeba is the complement of
 a zonotope in a covering of $\TT^2$.

 We let \defcolor{$Z_H$} be the zonotope of $F_H$---this is the sum of intervals $[{\bf 0},\pi b]$ for $b\in F_H$.
 Its normal fan is the hyperplane arrangement $\calH_{F_H}$.
 We display it as an oriented cycle.
  \begin{equation}\label{Eq:OrientedZonotope}
    \raisebox{-28pt}{\includegraphics{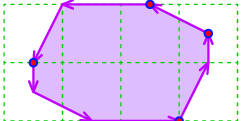}}
  \end{equation}
 The closed coamoeba $\coscrA(D_{F_H})$ is the closure of the image of $(\CC\smallsetminus\RR)\subset\PP^1$ under the map $\Arg\circ\psi$,
 which has no critical points in $\CC\smallsetminus\RR$.
 As with the Zonotope $Z_H$, it is best to lift this to the universal cover $\RR^2$ of $\TT^2$.
 Let \defcolor{$\coscrA^+_H$} be the closure of the image of the upper half plane under the lift of $\Arg\circ\psi$, and
 \defcolor{$\coscrA^-_H$} the same for the lower half plane.
 We consider this as an oriented cycle.

 We describe $\coscrA^+_H$ by describing its oriented boundary.
 Let $v$ be a vertex of $Z_H$ whose incoming oriented edge is $\pi f$, for some $f\in F_H$.
 In~\eqref{Eq:OrientedZonotope}, the four choices for $v$ are the marked vertices.
 Each vector of $F_H$ spans a line, and let $f_1=f, f_2,\dotsc,f_r$ be the ordering of the vectors of $F_H$ corresponding
 to the order in which the lines they support are encountered moving clockwise from $f$.
 In our example, we choose $v$ to the northeasternmost vertex of $Z_H$---for this choice we get the ordering of $F_H$ indicated
 in~\eqref{Eq:F_H}.
 Then the oriented boundary of $\coscrA^+_H$ is, in order 
 \[
    v\,,\
    v-\pi f_1\,,\ 
    v-\pi f_1 -\pi f_2\,,\ \dotsc\,,\ 
    v-\pi f_1- \dotsb - f_{r-1}\,,\  v\,.
 \]
 Let $\coscrA^-_H$ be the reflection of $\coscrA^+_H$ in the origin.
 We show $\coscrA^+_H$ and the union of the cycles $Z_H$,   $\coscrA^+_H$, and  $\coscrA^i_H$.
 \[
   \raisebox{40pt}{\includegraphics{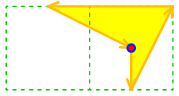}}\qquad
   \includegraphics{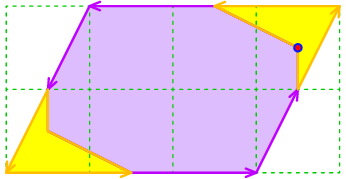}
 \]
 Observe that   $Z_H \cup \coscrA^+\cup \coscrA^-_H$ covers $d_H=6$ fundamental domains of $\TT^2$ in $\RR^2$.
 We summarize the major results of~\cite{DFS,PS} about the coamoeba discriminant.
 \begin{enumerate}
 \item
   The closed coamoeba of $D_{B|_H}$ is the image of  $\coscrA^+_H\cup \coscrA^-_H$ in $\TT^2$.
 \item
   The oriented cycle $Z_H \cup \coscrA^+\cup \coscrA^-_H$ pushes forward to $\TT^2$ as the cycle $d_H[\TT^2]$.   \hfill$\diamond$ 
 \end{enumerate}
\end{example}  

\providecommand{\bysame}{\leavevmode\hbox to3em{\hrulefill}\thinspace}
\providecommand{\MR}{\relax\ifhmode\unskip\space\fi MR }
\providecommand{\MRhref}[2]{%
  \href{http://www.ams.org/mathscinet-getitem?mr=#1}{#2}
}
\providecommand{\href}[2]{#2}

\end{document}